\tikzset{Rightarrow/.style={double equal sign distance,>={Implies},->},
triple/.style={-,preaction={draw,Rightarrow}},
quadruple/.style={preaction={draw,Rightarrow,shorten >=0pt},shorten >=1pt,-,double,double
distance=0.2pt}}
\definecolor{darkblue}{rgb}{0,0,0.3}
\newtheorem{thm}{Theorem}[section]
\newtheorem{cor}[thm]{Corollary}
\newtheorem{lemma}[thm]{Lemma}
\newtheorem{prop}[thm]{Proposition}
\theoremstyle{definition}
\newtheorem{define}[thm]{Definition}
\newtheorem{notate}[thm]{Notation}
\theoremstyle{remark}
\newtheorem{rem}[thm]{Remark}
\newtheorem{example}[thm]{Example}
\newcommand\nbd\nobreakdash
\newcommand{\ndef}{\emph}
\def\lrar{\longrightarrow}
\newcommand{\bS}{\mathbf{S}}        
\newcommand{\op}{\text{op}}
\newcommand{\Cat}{{\mathcal{C}\mspace{-2.mu}\mathit{at}}}
\newcommand{\nCat}[1]{{#1}\hbox{\protect\nbd-}\kern1pt\Cat}	    
\newcommand{\s}{\mathcal{S}\mspace{-2.mu}\text{et}_{\Delta}}
\newcommand{\Ss}{\mathcal{S}\mspace{-2.mu}\text{et}_{\Delta}^{\,\mathrm{sc}}}
\newcommand{\Nsc}{\mathrm{N}^{\mathrm{sc}}}			
\newcommand{\ho}{\mathrm{ho}}
\def\Beta{\mathfrak{P}}                             
\newcommand{\ogr}{\otimes}
\newcommand{\hgr}{\hat\ogr}
\newcommand{\Fun}{\mathrm{Fun}}
\DeclareMathOperator{\Hom}{Hom}
\DeclareMathOperator{\Map}{Map}
\DeclareMathOperator{\Id}{Id}
\DeclareMathOperator{\Ob}{Ob}
\DeclareMathOperator{\Ho}{Ho}
\DeclareMathOperator{\Set}{Set}
\DeclareMathOperator{\sca}{sc}
\newcommand{\gr}{\mathrm{gr}}               
\newcommand{\opgr}{\mathrm{opgr}}
\newcommand{\LMap}{\Fun^{\opgr}}
\newcommand{\RMap}{\Fun^{\gr}}
\DeclareMathOperator{\diag}{diag}
\DeclareMathOperator{\thi}{th}
\DeclareMathOperator{\Psh}{PSh}
\def\alp{{\alpha}}
\def\bet{{\beta}}
\def\sig{{\sigma}}
\def\vphi{{\varphi}}
\def\Del{{\Delta}}
\def\Lam{{\Lambda}}
\def\vphi{{\varphi}}
\def\hrar{\hookrightarrow}
\def\x{\stackrel}
\def\ovl{\overline}
\def\wtl{\widetilde}
\newcommand{\tr}[2]{\mathchoice
	{#1\raise -1.8pt\vbox{\hbox{$\kern -.8pt/\mathsmaller{#2} $}}}
	{#1\raise -1.8pt\vbox{\hbox{$\kern -.8pt/#2$}}\kern .8pt}
	{#1\raise -1.8pt\vbox{\hbox{$\scriptstyle\kern -.8pt /#2$}}}
	{#1\raise -1.8pt\vbox{\hbox{$\scriptscriptstyle\kern -.8pt /#2$}}}}
\newcommand{\trbis}[2]{\mathchoice
	{#1\raise -1.8pt\vbox{\hbox{$\kern -.8pt\mathsmaller{/#2} $}}}
	{#1\raise -1.8pt\vbox{\hbox{$\kern -.8pt\mathsmaller{/#2}$}}\kern .8pt}
	{#1\raise -1.8pt\vbox{\hbox{$\scriptstyle\kern -.8pt /#2$}}}
	{#1\raise -1.8pt\vbox{\hbox{$\scriptscriptstyle\kern -.8pt /#2$}}}}
\newcommand{\overslice}[2]{\mathchoice
	{#1\raise -1.8pt\vbox{\hbox{$\kern -.8pt\mathsmaller{#2/} $}}}
	{#1\raise -1.8pt\vbox{\hbox{$\kern -.8pt\mathsmaller{#2/}$}}\kern .8pt}
	{#1\raise -1.8pt\vbox{\hbox{$\scriptstyle\kern -.8pt #2/$}}}
	{#1\raise -1.8pt\vbox{\hbox{$\scriptscriptstyle\kern -.8pt #2/$}}}}
\def\labelstylecode#1{%
	\pgfkeys@split@path%
	\edef\label@key{/triangle/label/\pgfkeyscurrentname}%
	\edef\style@key{\pgfkeyscurrentkey/.@val}%
	\def\temp@a{#1}%
	\def\temp@b{\pgfkeysnovalue}%
	\ifx\temp@a\temp@b
	\pgfkeysgetvalue{\label@key}\temp@a
	\ifx\temp@a\temp@b\else
	\pgfkeysalso{commutative diagrams/.cd, \style@key}%
	\fi
	\else
	\pgfkeys{\style@key/.code = \pgfkeysalso{#1}}%
	\fi}
\def\arrowstylecode#1{%
	\edef\style@key{\pgfkeyscurrentkey/.@val}%
	\def\temp@a{#1}%
	\def\temp@b{\pgfkeysnovalue}%
	\ifx\temp@a\temp@b
	\pgfkeysalso{commutative diagrams/.cd, \style@key}%
	\else
	\pgfkeys{\style@key/.code = \pgfkeysalso{#1}}%
	\fi}
\def\tr@abc{%
	\draw [/triangle/arrowstyle/012] (90:0.20) --
	node [/triangle/labelstyle/012] {
		\pgfkeysvalueof{/triangle/label/012}} (270:0.10);
}
\def\tr@#1#2{
	\begin{scope}[shift=#2, commutative diagrams/every diagram]
		
		\node (n{#1}0) at (150:1) {
			\pgfkeysvalueof{/triangle/label/0}};
		\node (n{#1}1) at (270:0.6) {
			\pgfkeysvalueof{/triangle/label/1}};
		\node (n{#1}2) at (30:1) {
			\pgfkeysvalueof{/triangle/label/2}};			
		
		\node (s#1) at (0,0) [circle, inner sep = 0pt,
		fit = (n{#1}0.center)(n{#1}1.center)(n{#1}2.center)] {};
		
		\begin{scope}[commutative diagrams/.cd, every arrow, every label]
			\ifcase #1
			\def\list{0/1, 1/2, 0/2}\or
			\def\list{0/1, 1/2, 0/2}\else
			\def\list{}\fi
			
			\foreach \s / \e in \list {
				\draw [/triangle/arrowstyle/\s\e] (n{#1}\s) --
				node [/triangle/labelstyle/\s\e] {
					\pgfkeysvalueof{/triangle/label/\s\e}} (n{#1}\e);
			}
			
			\ifcase #1
			\tr@abc\or
			\tr@abc
			\else\fi
			
		\end{scope}
	\end{scope}
}
\def\triangle#1{
	\pgfkeys{#1}
	\tr@{0}{(0:0)}
}
\def\labelstylecode#1{%
	\pgfkeys@split@path%
	\edef\label@key{/square/label/\pgfkeyscurrentname}%
	\edef\style@key{\pgfkeyscurrentkey/.@val}%
	\def\temp@a{#1}%
	\def\temp@b{\pgfkeysnovalue}%
	\ifx\temp@a\temp@b
	\pgfkeysgetvalue{\label@key}\temp@a
	\ifx\temp@a\temp@b\else
	\pgfkeysalso{commutative diagrams/.cd, \style@key}%
	\fi
	\else
	\pgfkeys{\style@key/.code = \pgfkeysalso{#1}}%
	\fi}
\def\arrowstylecode#1{%
	\edef\style@key{\pgfkeyscurrentkey/.@val}%
	\def\temp@a{#1}%
	\def\temp@b{\pgfkeysnovalue}%
	\ifx\temp@a\temp@b
	\pgfkeysalso{commutative diagrams/.cd, \style@key}%
	\else
	\pgfkeys{\style@key/.code = \pgfkeysalso{#1}}%
	\fi}
\def\sq@abc{%
	\draw [/square/arrowstyle/012] (235:0.25) --
	node [/square/labelstyle/012] {
		\pgfkeysvalueof{/square/label/012}} (235:0.6);
}
\def\sq@bcd{%
	\draw [/square/arrowstyle/123] (-54:0.25) --
	node [/square/labelstyle/123] {
		\pgfkeysvalueof{/square/label/123}} (-54:0.6);
}
\def\sq@acd{%
	\draw [/square/arrowstyle/023] (55:0.55) --
	node [/square/labelstyle/023] {
		\pgfkeysvalueof{/square/label/023}} (15:0.45);
}
\def\sq@abd{%
	\draw [/square/arrowstyle/013] (125:0.55) --
	node [/square/labelstyle/013] {
		\pgfkeysvalueof{/square/label/013}} (165:0.45);
}
\def\sq@#1#2{
	\begin{scope}[shift=#2, commutative diagrams/every diagram]
		
		\foreach \i in {0,1,2,3} {
			\tikzmath{\a = 135 + (90 * \i);}
			\node (n{#1}\i) at (\a:1) {
				\pgfkeysvalueof{/square/label/\i}};
		}
		
		\node (s#1) at (0,0) [circle, inner sep = 0pt,
		fit = (n{#1}0.center)(n{#1}1.center)(n{#1}2.center)
		(n{#1}3.center)] {};
		
		\begin{scope}[commutative diagrams/.cd, every arrow, every label]
			\ifcase #1
			\def\list{0/1, 1/2, 2/3, 0/2, 0/3}\or
			\def\list{0/1, 1/2, 2/3, 1/3, 0/3}\else
			\def\list{}\fi
			
			\foreach \s / \e in \list {
				\draw [/square/arrowstyle/\s\e] (n{#1}\s) --
				node [/square/labelstyle/\s\e] {
					\pgfkeysvalueof{/square/label/\s\e}} (n{#1}\e);
			}
			
			\ifcase #1
			\sq@abc\sq@acd\or
			\sq@abd\sq@bcd
			\else\fi
			
		\end{scope}
	\end{scope}
}
\def\square#1{
	\pgfkeys{#1}
	\sq@{0}{(180:2)}\sq@{1}{(0:2)}
	
	\begin{scope}[commutative diagrams/.cd, every arrow, every label]
		\draw[->] [shorten >=10pt, shorten <=10pt, /square/arrowstyle/0123] (s0) --
		node [/square/labelstyle/0123] {%
			\pgfkeysvalueof{/square/label/0123}} (s1);
		
	\end{scope}
}
\def\labelstylecode@pent#1{%
	\pgfkeys@split@path%
	\edef\label@key{/pentagon/label/\pgfkeyscurrentname}%
	\edef\style@key{\pgfkeyscurrentkey/.@val}%
	\def\temp@a{#1}%
	\def\temp@b{\pgfkeysnovalue}%
	\ifx\temp@a\temp@b
	\pgfkeysgetvalue{\label@key}\temp@a
	\ifx\temp@a\temp@b\else
	\pgfkeysalso{commutative diagrams/.cd, \style@key}%
	\fi
	\else
	\pgfkeys{\style@key/.code = \pgfkeysalso{#1}}%
	\fi}
\def\arrowstylecode@pent#1{%
	\edef\style@key{\pgfkeyscurrentkey/.@val}%
	\def\temp@a{#1}%
	\def\temp@b{\pgfkeysnovalue}%
	\ifx\temp@a\temp@b
	\pgfkeysalso{commutative diagrams/.cd, \style@key}%
	\else
	\pgfkeys{\style@key/.code = \pgfkeysalso{#1}}%
	\fi}
\def\pent@abc{%
	\draw [/pentagon/arrowstyle/012] (198:0.45) --
	node [/pentagon/labelstyle/012] {
		\pgfkeysvalueof{/pentagon/label/012}} (198:0.8);
}
\def\pent@bcd{%
	\draw [/pentagon/arrowstyle/123] (126:0.45) --
	node [/pentagon/labelstyle/123] {
		\pgfkeysvalueof{/pentagon/label/123}} (126:0.8);
}
\def\pent@cde{%
	\draw [/pentagon/arrowstyle/234] (54:0.45) --
	node [/pentagon/labelstyle/234] {
		\pgfkeysvalueof{/pentagon/label/234}} (54:0.8);
}
\def\pent@ade{%
	\draw [/pentagon/arrowstyle/034] (-40:0.6) --
	node [/pentagon/labelstyle/034] {
		\pgfkeysvalueof{/pentagon/label/034}} (-5:0.5);
}
\def\pent@abe{
	\draw [/pentagon/arrowstyle/014] (-70:0.55) --
	node [/pentagon/labelstyle/014] {
		\pgfkeysvalueof{/pentagon/label/014}} (-110:0.55);
}
\def\pent@acd{%
	\draw [/pentagon/arrowstyle/023] (55:0.3) --
	node [/pentagon/labelstyle/023] {
		\pgfkeysvalueof{/pentagon/label/023}} (125:0.3);
}
\def\pent@bde{%
	\draw [/pentagon/arrowstyle/134] (-5:0.4) --
	node [/pentagon/labelstyle/134] {
		\pgfkeysvalueof{/pentagon/label/134}} (35:0.5);
}
\def\pent@ace{%
	\draw [/pentagon/arrowstyle/024] (-45:0.45) --
	node [/pentagon/labelstyle/024] {
		\pgfkeysvalueof{/pentagon/label/024}} (-45:0.1);
}
\def\pent@abd{%
	\draw [/pentagon/arrowstyle/013] (-90:0.22) --
	node [/pentagon/labelstyle/013] {
		\pgfkeysvalueof{/pentagon/label/013}} (-150:0.46);
}
\def\pent@bce{%
	\draw [/pentagon/arrowstyle/124] (188:0.4) --
	node [/pentagon/labelstyle/124] {
		\pgfkeysvalueof{/pentagon/label/124}} (150:0.55);
}
\def\pent@#1#2{
	\begin{scope}[shift=#2, commutative diagrams/every diagram]
		
		\foreach \i in {0,1,2,3,4} {
			\tikzmath{\a = 270 - (72 * \i);}
			\node (n{#1}\i) at (\a:1) {
				\pgfkeysvalueof{/pentagon/label/\i}};
		}
		
		\node (p#1) at (0,0) [circle, inner sep = 0pt,
		fit = (n{#1}0.center)(n{#1}1.center)(n{#1}2.center)
		(n{#1}3.center)(n{#1}4.center)] {};
		
		\begin{scope}[commutative diagrams/.cd, every arrow, every label]
			\ifcase #1
			\def\list{0/1, 1/2, 2/3, 3/4, 0/4, 0/2, 0/3}\or
			\def\list{0/1, 1/2, 2/3, 3/4, 0/4, 1/3, 1/4}\or
			\def\list{0/1, 1/2, 2/3, 3/4, 0/4, 0/2, 2/4}\or
			\def\list{0/1, 1/2, 2/3, 3/4, 0/4, 0/3, 1/3}\or
			\def\list{0/1, 1/2, 2/3, 3/4, 0/4, 1/4, 2/4}\else
			\def\list{}\fi
			
			\foreach \s / \e in \list {
				\draw [/pentagon/arrowstyle/\s\e] (n{#1}\s) --
				node [/pentagon/labelstyle/\s\e] {
					\pgfkeysvalueof{/pentagon/label/\s\e}} (n{#1}\e);
			}
			
			\ifcase #1
			\pent@abc\pent@acd\pent@ade\or
			\pent@bcd\pent@bde\pent@abe\or
			\pent@cde\pent@ace\pent@abc\or
			\pent@ade\pent@abd\pent@bcd\or
			\pent@abe\pent@bce\pent@cde
			\else\fi
			
		\end{scope}
	\end{scope}
}
\def\pentagon#1{
	\pgfkeys{#1}
	\pent@{2}{(270:3)}\pent@{0}{(198:3)}\pent@{3}{(126:3)}
	\pent@{1}{(54:3)}\pent@{4}{(342:3)}
	
	\begin{scope}[commutative diagrams/.cd, every arrow, every label]
		\draw [/pentagon/arrowstyle/0123] (p0) --
		node [/pentagon/labelstyle/0123] {
			\pgfkeysvalueof{/pentagon/label/0123}} (p3);
		
		\draw [/pentagon/arrowstyle/0134] (p3) --
		node [/pentagon/labelstyle/0134] {
			\pgfkeysvalueof{/pentagon/label/0134}} (p1);
		
		\draw [/pentagon/arrowstyle/1234] (p1) --
		node [/pentagon/labelstyle/1234] {
			\pgfkeysvalueof{/pentagon/label/1234}} (p4);
		
		\draw [/pentagon/arrowstyle/0234] (p0) --
		node [/pentagon/labelstyle/0234] {
			\pgfkeysvalueof{/pentagon/label/0234}} (p2);
		
		\draw [/pentagon/arrowstyle/0124] (p2) --
		node [/pentagon/labelstyle/0124] {
			\pgfkeysvalueof{/pentagon/label/0124}} (p4);
		
		\draw [/pentagon/arrowstyle/01234] (270:0.75) --
		node [/pentagon/labelstyle/01234] {
			\pgfkeysvalueof{/pentagon/label/01234}} (90:0.75);
	\end{scope}
}
\newcommand{\plus}[1]{\mathop{\amalg}\limits_{#1}}
\newcommand{\st}{\underline{\mathrm{Strat}}}
\newcommand{\C}{\mathcal{C}}
\newcommand{\D}{\mathcal{D}}
\newcommand{\E}{\mathcal{E}}
\newcommand{\V}{\mathcal{V}}
\newcommand{\xto}{\xrightarrow}
\newcommand{\Fl}{\text{Cell}}
\newcommand{\On}[1]{\mathcal{O}_{#1}}
\def\vphi{\varphi}
\def\lrar{\rightarrow}
\def\llar{\longleftarrow}
\newcommand{\oplax}{\mathrm{oplax}}
\newcommand{\minisimeq}{\scalebox{0.5}{\ensuremath\simeq}}
\newcommand{\cFun}{\mathrm{Fun}^{\minisimeq}}
\setlist[itemize]{leftmargin=*}
\setlist[enumerate]{leftmargin=*}
\renewcommand{\tocsection}[3]{%
\indentlabel{\@ifnotempty{#2}{\parbox[b]{3ex}{\bfseries\ignorespaces#1 #2}}}\bfseries#3} 
\renewcommand{\tocsubsection}[3]{%
\indentlabel{\@ifnotempty{#2}{\hspace{1.6em}\parbox[b]{5ex}{\ignorespaces#1 #2}}}#3}
\title{Gray tensor products and lax functors of \((\infty,2)\)-categories}
\author{Andrea Gagna}
\address{Universita Karlova\\ Matematicko-fyzik\'{a}ln\'{i} fakulta \\ Matematická sekce \\ Sokolovská 83 \\ 186 75 Praha 8 \\ Czech Republic}
\email{gagna@karlin.mff.cuni.cz}
\urladdr{https://sites.google.com/view/andreagagna/home}
\author{Yonatan Harpaz}
\address{Institut Galilée\\ Université Paris 13\\ 99 avenue Jean-Baptiste Clément\\ 93430 Villeta-neuse\\ France}
\email{harpaz@math.univ-paris13.fr}
\urladdr{https://www.math.univ-paris13.fr/~harpaz}
\author{Edoardo Lanari}
\address{Institute of Mathematics CAS \\ \v{Z}itn\'a 25 \\115 67   Praha 1\\ Czech Republic}
\email{edoardo.lanari.el@gmail.com}
\urladdr{https://sites.google.com/view/edoardo-lanari}
\subjclass[2010]{18G30, 18G55, 55U10, 55U35}
\begin{document}
\begin{abstract}
	We give a definition of the Gray tensor product in the setting of scaled simplicial sets which is associative and forms a left Quillen bifunctor with respect to the bicategorical model category of Lurie. We then introduce a notion of oplax functor in this setting, and use it in order to characterize the Gray tensor product by means of a universal property. A similar characterization was used by Gaitsgory and Rozenblyum in their definition of the Gray product, thus giving a promising lead for comparing the two settings.
\end{abstract}
\maketitle
\tableofcontents
\section*{Acknowledgements}
The first author is supported by GA\v{C}R EXPRO 19-28628X.
The third author gratefully acknowledges the support of Praemium Academiae of M.~Markl and RVO:67985840, as well as fruitful conversations with Nick Rozenblyum during his stay at MSRI. 

\section*{Introduction}

The theory of 2-categories provides a powerful framework
in which one can develop formal category theory, allowing for notions such as adjunctions, Kan extensions, correspondences and lax (co)limits to be studied in an abstract setting.
The category of 2-categories carries a monoidal structure, given by the Gray tensor product, which makes use of the 2-dimensional structure available.
This monoidal structure comes in a few flavors, as is often the case in the 2-categorical setting:
there is a pseudo version,
which was the first one to be defined by Gray in~\cite{GrayFormal} (hence the name), as well as a lax and an oplax version.
The Gray tensor product serves the purpose of representing pseudo/lax natural transformations, and it is thus a replacement for the cartesian product of 2-categories when one considers these weaker notion of morphisms. Moreover, while the cartesian product
is badly-behaved with respect to the folk model category
structure on \(2\)-categories, Lack~\cite{LackModelBicat} showed that the pseudo Gray tensor is compatible with
the folk model category, and recently Ara and Lucas~\cite{AraLucas} proved that the same holds for
the lax and oplax version of the Gray tensor product.

In the last decade, the study of \(\infty\)-categories
as a building block for homotopy-coherent mathematics
has contributed to the development of spectral and
derived algebraic geometry, culminating in impressive applications, such as the proof by Gaitsgory and Lurie in~\cite{GaitsgoryLurieWeil} of Weil's conjecture on Tamagawa numbers for function fields.
In the same spirit of organizing ordinary category theory from a 2-dimensional perspective, the framework of \((\infty, 2)\)-categories allows to better understand homotopy coherent constructions performed on \((\infty,1)\)-categories.

In this paper, we introduce and study a particularly well-behaved model of
the (oplax) Gray tensor product for \((\infty, 2)\)-categories.
We work in the category of 
scaled simplicial sets equipped with the bicategorical model structure constructed 
in~\cite{LurieGoodwillie}, and further developed in~\cite{Equivalence},
and we provide a Gray tensor product which is associative on-the-nose and is moreover a left Quillen bifunctor. In particular, with the bicategorical model structure the category of scaled simplicial sets forms a (non-symmetric) monoidal model category.  This implies, for example, that the Gray tensor product preserve homotopy colimits in each variable separately, and that the underlying \(\infty\)-category \(\Cat_{(\infty,2)}\) of \((\infty,2)\)-categories carries the structure of a presentably monoidal \(\infty\)-category with respect to the Gray product.

Versions of the Gray tensor product in different contexts already appear in the literature. Verity~\cite{VerityWeakComplicialI} defines a Gray tensor product in stratified simplicial sets compatible with the complicial model category structure for~\((\infty, \infty)\)-categories. This can be truncated to be compatible with the model category structure for \(2\)-truncated saturated complicial sets established in~\cite{OzornovaRovelliNComplicial}, giving a Gray tensor product for this model of~\((\infty, 2)\)-categories. In this paper, we prove that this version of the Gray tensor is equivalent to ours, via the equivalence established in \cite{Equivalence}. Another approach has been recently provided by Maehara~\cite{MaeharaGray} via the combinatorics of \(\Theta_2\)-sets.

The main motivation behind our interest in the Gray tensor product
comes from the recent book~\cite{GaitsgoryRozenblyumStudy},
where Gaitsgory and Rozenblyum develop a formalism of categories of correspondences which makes use of the Gray tensor product.
However, there are some unproven claims in the technical section
dealing with \((\infty, 2)\)-categories, and one of the aims of this project is to provide a proof
for some of these statements in the framework of scaled simplicial sets. For example,
the preservation of (homotopy) colimits in each variable provides a reference for~\cite[Propositions~10.3.2.6 and~10.3.2.9]{GaitsgoryRozenblyumStudy} in the setting of scaled simplicial sets. 

To fully achieve the goal above it is however necessary to compare our Gray tensor product with that of Gaitsgory--Rozenblyum. While we do not provide such a comparison in the current paper, we pave the way towards one by establishing a universal property for our Gray product in terms of oplax functors of \(\infty\)-bicategories. This mirrors the approach of Gaitsgory--Rozenblyum for the definition of the Gray product, and reduces the comparison of the respective Gray products that that of the two notions of oplax functors. The latter comparison is the subject of current work in progress and we hope to settle this question in a future paper.

It is worth pointing out that, despite the existence of various models of Gray tensor product, none of them have been proven to satisfy the characterizing universal property formulated in~\cite[Chapter 10, Point 3.2.3]{GaitsgoryRozenblyumStudy}, and in particular non were ever shown to be equivalent to that of loc.\ cit. This has been one obstacle in providing proofs for some of the unproven claims in~\cite[Chapter 10]{GaitsgoryRozenblyumStudy}, and we hope and expect that the present paper will open a path leading to such proofs. 

The present paper is organized as follows.
In \S\ref{sec:preliminaries} we fix the notation and recall the categories of marked and scaled simplicial sets, with their respective marked categorical and bicategorical model structures. 
We then examine the mapping spaces between two scaled simplicial sets following~\cite{LurieGoodwillie} and we derive
some standard consequences.

In the \S\ref{sec:gray-lax} we introduce the Gray tensor product
and the closely related notion of (op)lax transformations, and study some of its basic properties. We proceed in \S\ref{sec:comparison} to compare our definition with that of Verity in the setting of complicial sets, under the Quillen equivalence between the two models established in~\cite{Equivalence}. Then, in \S\ref{sec:gray-is-quillen} we prove that our Gray tensor product is left Quillen bifunctor, which constitutes the main result of this paper. 

The final~\S\ref{sec:oplax} is dedicated to the establishment of a universal mapping property for the Gray product in terms of a suitable notion of oplax functors, which we generalize from classical 2-category theory to the setting of \(\infty\)-bicategories. This universal characterization allows one in principle to recognize the Gray tensor product in any given model for \((\infty,2)\)-categories, as soon as this model supports a notion of oplax functors. Our main motivation here is to initiate a path towards the comparison of the present Gray tensor product with that of Gaitsgory--Rozenblyum, so that properties proven on the Gray tensor product at hand (including those derived from the results of the present paper, such as associativity and compatibility with colimits), could be applied to that of Gaitsgory--Rozenblyum, thus providing proofs for the unproven claims listed in~\cite[10.0.4.2]{GaitsgoryRozenblyumStudy}.

\section{Preliminaries}\label{sec:preliminaries}
In this section we recall the necessary definitions and theorems that will be used throughout this paper.

\begin{notate}
We will denote by \(\Delta\) the category of simplices, that is
the category whose objects are the finite non-empty ordinals \([n] = \{0, 1, 2, \dots, n\}\)
and morphisms are the non-decreasing maps.
We will denote by \(\s\) the category of simplicial sets, that is the category of
presheaves on sets of~\(\Delta\), and will employ the standard notation
\(\Del^n\) for the \(n\)-simplex, \ie the simplicial set representing
the object~\([n]\) of~\(\Delta\).
For any subset \(\emptyset \neq S \subseteq [n]\) we will write \(\Del^S \subseteq \Del^n\)
to denote the \((|S|-1)\)-dimensional face of \(\Del^n\) whose vertices belong to \(S\).
For \(0 \leq i \leq n\) we will denote by \(\Lam^n_i \subseteq \Del^n\)
the \(i\)-th horn in \(\Del^n\), that is, the subsimplicial set of \(\Del^n\) spanned
by all the \((n-1)\)-dimensional faces containing the \(i\)-th vertex. 
For any simplicial set \(X\) and any integer \(p\geq 0\), we will denote by \(\deg_p(X)\)
the set of degenerate~\(p\)-simplices of~\(X\).
\end{notate}

By \ndef{\(\infty\)\nbd-category} we will always mean a \emph{quasi-category},
\ie a simplicial set \(X\) which admits extensions for all inclusions 
\(\Lambda^n_i\rightarrow\Delta^n\), for all \(n > 1\) and all \(0 < i < n\)
(known as \ndef{inner horns}). Given an \(\infty\)-category \(X\),
we will denote its homotopy category by \(\ho(X)\).
This is the ordinary category having as objects the \(0\)-simplices of \(X\),
and as morphisms \(x \rightarrow y\) the set of equivalence classes of \(1\)-simplices
\(f\colon x \rightarrow y\) of \(X\) under the equivalence relation generated by identifying
\(f\) and \(f'\) if there is a \(2\)-simplex \(H\) of \(X\) with
\( H_{|\Del^{\{1,2\}}}=f, \ H_{|\Del^{\{0,2\}}}=f'\) and \(H_{|\Del^{\{0,1\}}}\) degenerate on~\(x\).
We recall that the functor \(\ho\colon \nCat{\infty}\rightarrow \nCat{1}\)
is left adjoint to the ordinary nerve functor \(N \colon \nCat{1} \to \nCat{\infty}\).

\subsection{Marked simplicial sets and \(\infty\)-categories}

\begin{define}
	A \emph{marked simplicial set} is a pair \((X,E)\) where \(X\) is
	simplicial set and \(E\) is a subset of the set of \(1\)-simplices of \(X\),
	called \emph{marked simplices} or \ndef{marked edges}, containing the degenerate ones.
	A map of marked simplicial sets \(f\colon (X,E_X)\rightarrow (Y,E_Y)\) is a map of simplicial sets \(f\colon X \rightarrow Y\) satisfying \(f(E_X)\subseteq E_Y\).
	
	The category of marked simplicial sets will be denoted by \(\s^+\). 
\end{define}

\begin{rem}\label{rem:marked_loc_cartesian_closed}
	The category \(\s^+\) of marked simplicial sets
	admits an alternative description, as the category of models of a limit sketch.
	In particular, it is a reflective localization of a presheaf category
	and it is a cartesian closed category.
	In fact, it is a locally cartesian closed category.
\end{rem}

\begin{thm}[\cite{HTT}]\label{thm:marked-categorical}
	There exists a model category structure on the category \(\s^+\)
	of marked simplicial sets in which cofibrations are exactly the monomorphisms
	and the fibrant objects are marked simplicial sets \((X, E)\) in which \(X\)
	is an \(\infty\)-category and \(E\) is the set of equivalences of \(X\),
	\ie \(1\)-simplices \(f\colon \Delta^1 \rightarrow X\) which are invertible in \(\ho(X)\). 
\end{thm}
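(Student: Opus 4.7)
The theorem is a recall of the ``marked model structure'' constructed in~\cite[\S3.1]{HTT} (specifically the Cartesian model structure on \((\s^+)_{/S}\) specialized to \(S = \Delta^0\)), so the plan is to sketch Lurie's approach. The underlying category \(\s^+\) is locally presentable (Remark~\ref{rem:marked_loc_cartesian_closed}), the class of monomorphisms is generated by a set (the boundary inclusions \(\partial\Del^n \hookrightarrow \Del^n\) together with \((\Del^1)^\flat \hookrightarrow (\Del^1)^\sharp\)), and the overall strategy is to invoke Jeff Smith's recognition theorem for combinatorial model categories.

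The key combinatorial input is a set \(\mathcal{A}\) of generating trivial cofibrations, usually called \emph{marked anodyne morphisms}, consisting of: (i) inner horn inclusions \(\Lam^n_i \hookrightarrow \Del^n\) for \(0 < i < n\) with minimal marking; (ii) the outer horn inclusion \(\Lam^n_n \hookrightarrow \Del^n\) with the terminal edge \(\Del^{\{n-1,n\}}\) marked (which will force marked edges to behave like equivalences for composition); (iii) an inclusion into \((\Del^2)^\sharp\) encoding that marked edges are closed under composition in a \(2\)-simplex; and (iv) an inclusion of the form \(K^\flat \hookrightarrow K^\sharp\), where \(K\) is the nerve of the walking isomorphism, which ensures invertibility. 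I would declare the \emph{fibrations} to be the morphisms with the right lifting property against \(\mathcal{A}\), and the \emph{weak equivalences} via a mapping-space criterion: \(f\colon X \to Y\) is a weak equivalence if for every object \(Z\) which is local with respect to \(\mathcal{A}\), the induced map on internal mapping simplicial sets \(\Map(Y, Z) \to \Map(X, Z)\) is a weak homotopy equivalence, using the cartesian closed structure provided by Remark~\ref{rem:marked_loc_cartesian_closed}.

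One then verifies Smith's axioms: the class of weak equivalences contains all isomorphisms, satisfies 2-out-of-3, and is closed under filtered colimits, all immediate from the mapping-space definition. The substantive content is showing that the maps in \(\mathcal{A}\) are themselves weak equivalences and that trivial cofibrations are closed under pushouts; the latter follows from cartesian closure once we know \(\mathcal{A}\)-cell complexes are weak equivalences, which is checked generator by generator using explicit combinatorial retractions built from the given horn/equivalence data.

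The main obstacle is identifying the fibrant objects precisely as the naturally marked \(\infty\)-categories. That the underlying simplicial set \(X\) of a fibrant \((X,E)\) is an \(\infty\)-category follows from (i); and (iv) together with a diagram chase shows that every edge in \(E\) is invertible in \(\ho(X)\). The hard direction is that every equivalence of \(X\) lies in \(E\): given an equivalence \(f\colon x \to y\) one must build, using the lifting properties afforded by (i)-(iv), a configuration that exhibits \(f\) as being in the saturated class of marked edges, ultimately reducing to the fact that degenerate edges are marked and that \(E\) is closed under the relation generated by \(2\)-simplices in \(X\). A slicker alternative is Lurie's route via the theory of (co)Cartesian fibrations \(X \to \Del^0\), where the marked edges in a fibrant object are shown to coincide with the Cartesian edges, which in turn are classically known to be precisely the equivalences in \(X\).
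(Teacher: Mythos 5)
The paper does not prove this statement: it is recalled verbatim from \cite{HTT}, being the special case $S=\Delta^0$ of Proposition~3.1.3.7 there, and your sketch accurately reproduces the structure of Lurie's argument (marked anodyne generators, Cartesian equivalences defined by a mapping-space criterion, a Smith-type recognition theorem, and the identification of fibrant objects via Cartesian fibrations over a point, whose Cartesian edges are exactly the equivalences). The only substantive deviation is in your generator (iv): in \cite[Definition~3.1.1.1]{HTT} one takes $K^\flat \to K^\sharp$ for \emph{every} Kan complex $K$, not just the nerve of the walking isomorphism; restricting to the latter still detects the fibrant objects (by Joyal's lifting theorem every equivalence in an $\infty$-category extends to a map from the walking isomorphism), but one should either check that this smaller set generates the same weakly saturated class or simply follow Lurie and keep the full family.
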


This is a special case of Proposition 3.1.3.7 in \cite{HTT}, when \(S=\Delta^0\).
We will refer to the model structure of Theorem~\ref{thm:marked-categorical}
as the \ndef{marked categorical model structure},
and its weak equivalences as \ndef{marked categorical equivalences}.

\begin{rem}
	Marked simplicial sets endowed with the marked categorical model structure
	are a model for \((\infty,1)\)-categories. 
\end{rem}

\subsection{Scaled simplicial sets and \(\infty\)-bicategories}\label{sec:scaled}

\begin{define}[\cite{LurieGoodwillie}]
	A \emph{scaled simplicial set} is a pair \((X,T)\) where \(X\) is simplicial set and \(T\) is a subset of the set of 2-simplices of \(X\), called \emph{thin \(2\)-simplices} or \emph{thin triangles}, containing the degenerate ones. A map of scaled simplicial sets \(f\colon (X,T_X)\rightarrow (Y,T_Y)\) is a map of simplicial sets \(f\colon X \rightarrow Y\) satisfying \(f(T_X)\subseteq T_Y\). We will often refer to a scaled simplicial set just without explicitly mentioning its thin 2-simplices, when this causes no ambiguity.
	
	We will denote by \(\Ss\) the category of scaled simplicial sets.
\end{define}

\begin{notate}\label{not:scaled_flat-sharp}
	Let \(X\) be a simplicial set. We will denote by \(X_{\flat} = (X, \deg_2(X))\)
	the scaled simplicial set where the
	thin triangles of \(X\) are the degenerate \(2\)-simplices and by \(X_{\sharp} = (X, X_2)\) the scaled simplicial
	set where all the triangles of \(X\) are thin.
	The assignments
	\[X \mapsto X_\flat\qquad\text{and}\qquad X \mapsto X_\sharp\]
	are the left and right adjoint of the obvious forgetful functor
	\(\Ss \to \s\).
\end{notate}

\begin{rem}\label{rem:scaled_reflection}
	The category \(\Ss\) admits an alternative description, as the category of models of a limit sketch.
	In particular, it is a reflective localization of a presheaf category and so it is cartesian closed.
	In fact, we can consider the category \(\Delta_{\sca}\) having \(\{[k]\}_{k\geq 0}\cup \{[2]_t\}\) as set of objects,
	obtained from \(\Delta\)
	by adding an extra object and maps \([2]\rightarrow[2]_t, \ \sigma^i_t\colon [2]_t\rightarrow [1]\)
	for \(i=0,1\) satisfying the obvious relations.
	The category \(\Ss\) is then the reflective localization of the category of presheaves \(\Psh(\Delta_{\sca})\) 
	(of sets) at the arrow \([2]_t\amalg_{[2]}[2]_t \rightarrow [2]_t  \),
	where we have identified an object of \(\Delta_{\sca}\) with its corresponding representable presheaf.
	Equivalently, the local objects are those presheaves \(X\colon\Delta_{\sca}^{\mathrm{op}}\rightarrow\mathbf{Set}\)
	for which \(X([2]_t)\rightarrow X([2])\) is a monomorphism.
\end{rem}

\begin{notate}\label{not:scaled_subset}
	We will often speak only of the non-degenerate thin \(2\)-simplices
	when considering a scaled simplicial set. For example, if \(X\) is a simplicial set
	and \(T\) is any set of triangles in \(X\) then we will denote by \((X,T)\)
	the scaled simplicial set whose underlying simplicial set is \(X\) and
	whose thin triangles are \(T\) together with the degenerate triangles.
	If \(L \subseteq K\) is a subsimplicial set then we use \(T|_L := T \cap L_2\)
	to denote the set of triangles in \(L\) whose image in \(K\) is contained in \(T\). 
\end{notate}

The following set of inclusions will characterize the model structure on scaled simplicial sets for \(\infty\)-bicategories. We are implicitly making use of Corollary~3.0.6 of~\cite{Equivalence} in that we assume weak \(\infty\)-bicategories coincide with \(\infty\)-bicategories, as defined in~\cite{LurieGoodwillie}.

\begin{define}
	\label{d:anodyne}
	Let \(\bS\) be the set of maps of scaled simplicial sets consisting of:
	\begin{enumerate}
		\item[(i)] the inner horns inclusions
		\[
		 \bigl(\Lambda^n_i,\Delta^{\{i-1,i,i+1\}}\bigr)\rightarrow \bigl(\Delta^n,\Delta^{\{i-1,i,i+1\}}\bigr)
		 \quad , \quad n \geq 2 \quad , \quad 0 < i < n ;
		\]
		\item[(ii)]\label{i:saturation} the map 
		\[
		 (\Delta^4,T)\rightarrow (\Delta^4,T\cup \{\Delta^{\{0,3,4\}}, \ \Delta^{\{0,1,4\}}\}),
		\]
		where we define
		\[
		 T\overset{\text{def}}{=}\{\Delta^{\{0,2,4\}}, \ \Delta^{\{ 1,2,3\}}, \ \Delta^{\{0,1,3\}}, \ \Delta^{\{1,3,4\}}, \ \Delta^{\{0,1,2\}}\};
		\]
		\item[(iii)] the set of maps
		\[
		\Bigl(\Lambda^n_0\plus{\Delta^{\{0,1\}}}\Delta^0,\Delta^{\{0,1,n\}}\Bigr)\rightarrow \Bigl(\Delta^n\plus{\Delta^{\{0,1\}}}\Delta^0,\Delta^{\{0,1,n\}}\Bigr)
		\quad , \quad n\geq 3.
		\]
	\end{enumerate}
\end{define}

We call \(\bS\) the set of \emph{generating anodyne morphisms}. We make use of it in the following definition.

\begin{define}
	An \emph{\(\infty\)-bicategory} is a scaled simplicial set \((X,T)\) which admits extensions along all maps in \(\bS\). Diagrammatically, if \(i\colon K \rightarrow L\) is a map in \(\bS\), then for every map \(f\colon K \rightarrow (X,T)\) there exists an extension as displayed below by the dashed arrow in the following diagram:
	\[
		\begin{tikzcd}
		K \ar[r,"f"] \ar[d,"i"{swap}]& (X,T)\\
		L \ar[ur,dashed]
		\end{tikzcd}.
	\]
\end{define}

Putting together the results in \cite{LurieGoodwillie} and \cite{Equivalence}, we get the following theorem.

\begin{thm}
	There exists a model structure on the category of scaled simplicial sets
	whose cofibrations are the monomorphisms and whose fibrant objects are the \(\infty\)-bicategories.
\end{thm}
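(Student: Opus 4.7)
The plan is to obtain the statement as a direct combination of the two references already cited in the excerpt, rather than constructing the model structure from scratch.

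First, I would invoke the construction of \cite{LurieGoodwillie}, where Lurie establishes a combinatorial model structure on \(\Ss\) whose cofibrations are exactly the monomorphisms. This is built by the standard recipe for left Bousfield localization of the injective model structure on the presheaf category underlying \(\Ss\) (see Remark~\ref{rem:scaled_reflection}), localized so that the fibrant objects are precisely the scaled simplicial sets admitting extensions along a certain set of maps which Lurie calls \emph{scaled anodyne}. The main content at this stage is purely formal: one checks that the relevant class of maps is generated by a set, so Smith's recognition principle applies. The outcome is a model structure with monomorphisms as cofibrations and, by construction, with fibrant objects given by the scaled simplicial sets having the right lifting property against Lurie's scaled anodyne maps, which he calls \emph{weak \(\infty\)-bicategories}.

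Second, I would identify these fibrant objects with the \(\infty\)-bicategories as defined in the excerpt, \textit{i.e.} via extensions against only the smaller set \(\bS\) of Definition~\ref{d:anodyne}. This identification is precisely Corollary~3.0.6 of~\cite{Equivalence}, which the authors have explicitly flagged just before Definition~\ref{d:anodyne}. Granting that corollary, a scaled simplicial set is a weak \(\infty\)-bicategory in the sense of~\cite{LurieGoodwillie} if and only if it is an \(\infty\)-bicategory in the sense of the paper at hand, so the two classes of fibrant objects agree.

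Combining the two inputs yields the theorem as stated. I expect the conceptually hard step to be the equivalence between weak \(\infty\)-bicategories and \(\infty\)-bicategories, as this requires genuine combinatorial work on scaled horn fillers in order to show that the extra scaled anodyne generators of \cite{LurieGoodwillie} can be deduced from the three families in~\(\bS\). However, that work has been done in~\cite{Equivalence} and is simply cited here; the present theorem itself amounts to packaging it together with Lurie's model-categorical construction, so no further argument is needed beyond the two citations.
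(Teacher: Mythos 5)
Your proposal matches the paper's own treatment, which offers no argument beyond ``putting together the results in [LurieGoodwillie] and [Equivalence]'': Lurie's construction supplies the model structure with monomorphisms as cofibrations, and Corollary~3.0.6 of~\cite{Equivalence} identifies its fibrant objects with the \(\infty\)-bicategories of Definition~\ref{d:anodyne}. The only nuance worth noting is that in Lurie's construction the fibrant objects are \emph{not} characterized ``by construction'' as the weak \(\infty\)-bicategories --- that identification (fibrant \(\Leftrightarrow\) extension property against the scaled anodyne maps, equivalently against \(\bS\)) is precisely the nontrivial content of the cited corollary, which you have in any case correctly located as the crux.
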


This model structure is proved in \cite{Equivalence} to be Quillen equivalent
to Verity's one on stratified sets for saturated \(2\)-trivial complicial sets,
as defined in~\cite{VerityWeakComplicialI} and~\cite{OzornovaRovelliNComplicial}.
We depict the Quillen equivalence as follows:
\begin{equation}
\label{equiv}
\xymatrixcolsep{1pc}
\vcenter{\hbox{\xymatrix{
			**[l]\Ss \xtwocell[r]{}_{U}^{\iota}{'\perp}& **[r] \st_{2}}}} ,
\end{equation}
where \(U\) denotes the forgetful functor and \(\iota\) is an inclusion. 
Furthermore, Lurie shows in~\cite{LurieGoodwillie} that there is a \ndef{scaled homotopy coherente nerve}
\[
 \Nsc \colon \nCat{\s^+} \longrightarrow \Ss
\]
which is a Quillen equivalence, where the category \(\nCat{\s^+}\) of category enriched
in marked simplicial sets is endowed with the \(\s^+\)-enriched model category structure
(see~\cite[\S A.3.2]{HTT}).

\begin{define}
Let \((X,T)\) be a scaled simplicial set. We will say that the collection of triangles \(T\) is \ndef{saturated} if both \((X,T)\) and \((X^{\op},T)\) satisfy the extension property with respect to the~\hyperref[i:saturation]{(ii)} of Definition~\ref{d:anodyne}.
\end{define}

\begin{example}
Any \(\infty\)-bicategory is saturated.
\end{example}

\begin{rem}
It follows from~\cite[Remark 3.1.4]{LurieGoodwillie} that if a scaled simplicial set \(X\) is saturated then \(X\) satisfies the extension property with respect to the maps \((\Del^3,T_i) \to \Del^3_{\sharp}\) for \(i=1,2\) where \(T_i\) is the collection of all triangles in \(\Del^3\) except \(\Del^{\{0,i,3\}}\).
\end{rem}

\begin{define}
Let \((X,T)\) be a scaled simplicial set. We define the \ndef{saturated closure} of \(T\) to be the smallest saturated set of triangles \(T'\) which contains \(T\) (note that such a set exists because the collection of saturated sets of triangles is closed under intersection).
\end{define}

\begin{lemma}
Let \((X,T)\) be a scaled simplicial set and \(\ovl{T}\) the saturated closure of \(T\). Then the map \((X,T) \to (X,\ovl{T})\) is a trivial cofibration in \(\Ss\).
\end{lemma}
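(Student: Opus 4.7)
The plan is to exhibit $(X,T) \to (X,\ovl{T})$ as a transfinite composition of pushouts of trivial cofibrations, so that it is a trivial cofibration by the closure properties of this class.

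First, I would construct $\ovl{T}$ by a transfinite small-object-style iteration. Define a chain $T = T_0 \subseteq T_1 \subseteq T_2 \subseteq \cdots$ of subsets of $X_2$ by setting, at a successor step, $T_{\alpha+1}$ to be $T_\alpha$ enlarged by $f(\Del^{\{0,3,4\}})$ and $f(\Del^{\{0,1,4\}})$ for every map of simplicial sets $f\colon \Del^4 \to X$ which sends the five triangles from the domain of~\ref{d:anodyne}(ii) into $T_\alpha$, together with every such $f$ taken through the order-reversing identification $(\Del^4)^{\op} \cong \Del^4$ (this accounts for the opposite extension condition in the definition of saturated). At limit ordinals, take unions. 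Since $X_2$ is a set, the chain stabilizes at some ordinal $\lambda$, and the resulting collection $T_\lambda$ is saturated and contained in every saturated subset of $X_2$ extending $T$, hence equals $\ovl{T}$.

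The key observation is that each successor step $(X,T_\alpha) \to (X,T_{\alpha+1})$ arises as a pushout of a coproduct of copies of~\ref{d:anodyne}(ii) and its opposite variant along the collection of contributing maps $f$, with target $(X,T_\alpha)$. By definition, \ref{d:anodyne}(ii) is a generating anodyne morphism, hence a trivial cofibration. For the opposite variant, I would invoke the self-duality of the bicategorical model structure: the involution $(-)^{\op}\colon \Ss \to \Ss$ induced by the order-reversing automorphism of $\Delta$ and preserving the thin triangles preserves the class of $\infty$-bicategories (any $\infty$-bicategory is saturated by the Example), and hence is a Quillen auto-equivalence, sending~\ref{d:anodyne}(ii) to its opposite variant; consequently the opposite variant is also a trivial cofibration. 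Thus each successor step is a trivial cofibration, and the limit steps are filtered colimits of such. By closure of trivial cofibrations under transfinite composition, $(X,T) \to (X,\ovl{T})$ is a trivial cofibration.

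The main obstacle is the rigorous justification that the opposite variant of~\ref{d:anodyne}(ii) is a trivial cofibration. The cleanest route is to verify that $(-)^{\op}\colon \Ss \to \Ss$ is a Quillen auto-equivalence, which reduces to checking that it preserves cofibrations (immediate, as these are monomorphisms) and fibrant objects (which follows from the fact that the opposite of an $\infty$-bicategory is again one, as reflected by the Example). A more hands-on alternative is a direct combinatorial argument realizing the opposite variant as a retract of a transfinite composition of pushouts of maps in $\bS$, but this is liable to be more laborious and would rely on the same saturation phenomena.
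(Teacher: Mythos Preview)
Your proposal is correct and follows essentially the same approach as the paper: both arguments run the small object argument with respect to the generating map~(ii) of Definition~\ref{d:anodyne} together with its opposite, observe that these maps are isomorphisms on underlying simplicial sets so that the iteration only enlarges the scaling, and conclude that the result is the saturated closure. The paper invokes the small object argument as a black box and then identifies the output with $\ovl{T}$ via an extension/retraction argument, whereas you unwind the iteration explicitly and argue minimality directly; these are cosmetic differences.

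One small caution: ``preserves cofibrations and fibrant objects'' is not by itself a criterion for being left Quillen. What you want is that $(-)^{\op}$ preserves cofibrations (clear) and weak equivalences; the latter follows from Lemma~\ref{l:test-equiv} once you know that the opposite of an \(\infty\)-bicategory is again an \(\infty\)-bicategory (this is standard, though the Example you cite says something else). With that in hand your deduction that the opposite of~(ii) is a trivial cofibration goes through. The paper, for its part, simply asserts that every map in \(S\) is a trivial cofibration without further comment, so your attention to this point is if anything more careful than the original.
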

\begin{proof}
Applying the small object argument with respect to the set \(S\) consisting of the map~\hyperref[i:saturation]{(ii)} of Definition~\ref{d:anodyne} and its opposite we may find a map \((X,T) \to (X',T')\) which is a retract of transfinite compositions of pushouts of maps in \(S\) and such that \(T'\) is saturated in \(X'\). Since the maps in \(S\) are isomorphisms on the underlying simplicial set the same holds for \(X \to X'\), and so we may identify \(T'\) with a set of triangles in \(X\). Since every map in \(S\) is a trivial cofibration the same holds for the resulting map \((X,T) \to (X,T')\). Now since \(T'\) is saturated and contains \(T\) it also contains its saturated closure \(\ovl{T}\). On the other hand, since \((X,\ovl{T})\) satisfies the extension property with respect to \(S\) it also satisfies the extension property with respect to \((X,T) \to (X,T')\), and so \(T'\subseteq \ovl{T}\). We may then conclude that \(T'=\ovl{T}\) is the saturated closure of \(T\) and hence the desired result follows.
\end{proof}

\begin{define}
	\label{core defi}
	Given a scaled simplicial set \(X\), we define its \emph{core} to be the simplicial set \(X^{\thi}\) spanned by those \(n\)-simplices of \(X\) whose 2-dimensional faces are thin triangles.
\end{define}

\begin{rem}
	Notice that \(X\) and \(X^{\thi}\) agree on the 1-skeleton.
	Moreover, whenever \(X\) is an \(\infty\)-bicategory, its
	core \(X^{\thi}\) (formally, its underlying simplicial set)
	is an \(\infty\)-category.
\end{rem}

\begin{define}\label{d:equivalence}
	Let \(\C\) be an \(\infty\)-bicategory. We will say that an edge in \(\C\) is \ndef{invertible} if it is invertible when considered in the \(\infty\)-category \(\C^{\thi}\), that is, if its image in the homotopy category of \(\C^{\thi}\) is an isomorphism. We will sometimes refer to invertible edges in \(\C\) as \ndef{equivalences}. We will denote by \(\C^{\minisimeq} \subseteq \C^{\thi}\) the subsimplicial set spanned by the invertible edges. Then \(\C^{\minisimeq}\) is an \(\infty\)-groupoid (that is, a Kan complex), which we call the \emph{core groupoid} of \(\C\). It can be considered as the \(\infty\)-groupoid obtained from \(\C\) by discarding all non-invertible \(1\)-cells and \(2\)-cells. If \((X,T)\) is an arbitrary scaled simplicial set then we will say that an edge in \(X\) is invertible if its image in \(\C\) is invertible for any bicategorical equivalence \((X,T) \to \C\). This does not depend on the choice of \(\C\).
\end{define}

\begin{notate}
	\label{n:mapping}
	Let \(\C\) be an \(\infty\)-bicategory and let \(x,y \in \C\) be two vertices.
	In section~4.2 of~\cite{LurieGoodwillie}, Lurie gives an explicit
	model for the mapping \(\infty\)\nbd-cat\-egory  from \(x\) to \(y\) in \(\C\) that we now recall.
	Let \(\Hom_{\C}(x,y)\) be the marked simplicial set whose \(n\)-simplices are given by maps \(f\colon\Del^n \times \Del^1 \lrar \C\) such that \(f_{|\Del^n \times \{0\}}\) is constant on \(x\), \(f_{|\Del^n \times \{1\}}\) is constant on \(y\), and the triangle \(f_{|\Del^{\{(i,0),(i,1),(j,1)\}}}\) is thin 
	for every \(0 \leq i\leq j \leq n\). An edge \(f\colon\Del^1 \times \Del^1 \lrar \C\) of \(\Hom_{\C}(x,y)\) is marked exactly when the triangle \(f_{|\Del^{\{(0,0),(1,0),(1,1)\}}}\) is thin. 
	The assumption that \(\C\) is an \(\infty\)-bicategory implies that 
	the marked simplicial set \(\Hom_{\C}(x,y)\) is \ndef{fibrant} in the marked categorical model structure, that is, it is an \(\infty\)-category whose marked edges are exactly the equivalences.	
\end{notate}

\begin{rem}\label{r:dwyer-kan}
	By Remark~4.2.1 and Theorem~4.2.2 of~\cite{LurieGoodwillie}, if \(\D\) is a fibrant \(\Set^+_\Del\)\nbd-en\-riched category and \(\C\) is an \(\infty\)-bicategory equipped with a bicategorical equivalence \(\vphi:\C \simeq \Nsc(\D)\) then the maps 
	\[ \Map_{\C}(x,y) \longrightarrow \Map_{\Nsc(\D)}(\vphi(x),\vphi(y)) \llar \Map_{\D}(\vphi(x),\vphi(y)) \]
	are categorical equivalences of marked simplicial sets for every pair of vertices \(x, y\) of~\(\C\). It then follows that a map \(\vphi: \C \lrar \C'\) of \(\infty\)-bicategories is a bicategorical equivalence if and only if it is essentially surjective (that is, every object in \(\C'\) is equivalent to an object in the image, see Definition~\ref{d:equivalence})
	and the induced map \(\Map_{\C}(x,y) \lrar \Map_{\C'}(\vphi(x),\vphi(y))\) is a categorical equivalence of (fibrant) marked simplicial sets for every \(x,y \in \C\).
\end{rem}

\begin{rem}\label{r:underlying}
	It follows from Remark~\ref{r:dwyer-kan} that if \(\vphi\colon\C \lrar \C'\) is a bicategorical equivalence of \(\infty\)-bicategories then the induced map \(\vphi^{\thi}:\C^{\thi} \lrar (\C')^{\thi}\) is an equivalence of \(\infty\)-categories.
\end{rem}

It is shown in~\cite[Proposition 3.1.8]{LurieGoodwillie} that the cartesian product
\[ \times:\Ss \times \Ss \longrightarrow \Ss \]
of a scaled anodyne map and a monomorphism is contained in the saturation of the class of scaled anodyne maps. Therefore, thanks to Theorem 5.1 of \cite{Equivalence}, the cartesian product is a left Quillen bifunctor with respect to the bicategorical model structure, i.e. \(\Ss\) is a cartesian closed model category. In particular, for a every two scaled simplicial sets \(X,Y\) we have a mapping object \(\Fun(X,Y)\) which satisfies (and is determined by) the exponential formula
\[ \Hom_{\Ss}(Z,\Fun(X,Y)) \cong \Hom_{\Ss}(Z \times X,Y) .\]
In addition, when \(Y\) is an \(\infty\)-bicategory the mapping object \(\Fun(X,Y)\) is an \(\infty\)\nbd-bi\-category as well, which we can consider as the \(\infty\)-bicategory of functors from \(X\) to \(Y\). In this case we will denote by \(\Fun^{\thi}(X,Y) \subseteq \Fun(X,Y)\) the core \(\infty\)-category and by \(\cFun(X,Y) \subseteq \Fun^{\thi}(X,Y)\) the core \(\infty\)-groupoid of \(\Fun(X,Y)\). In particular, \(\Fun^{\thi}(X,Y)\) is an \(\infty\)-category and \(\cFun(X,Y)\) is a Kan complex, 
which we consider as the \ndef{space of functors} from \(X\) to \(Y\). We note the following:

\begin{lemma}\label{l:test-equiv}
	Let \(f: X \lrar Y\) be a map of scaled simplicial sets. Then \(f\) is a bicategorical equivalence if and only if for every \(\infty\)-bicategory \(\C\) the induced map
	\begin{equation}\label{e:f-map} 
	f^*\colon\cFun(Y,\C) \longrightarrow \cFun(X,\C) 
	\end{equation}
	is an equivalence of Kan complexes.
\end{lemma}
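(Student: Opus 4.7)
My plan is to combine the cartesian closed structure on $\Ss$, a Ken Brown–type argument, and a standard homotopical Yoneda argument. Since every object of $\Ss$ is cofibrant (cofibrations being monomorphisms) and the cartesian product is a left Quillen bifunctor on the bicategorical model structure, the internal hom $\Fun(X,\C)$ is an $\infty$-bicategory whenever $\C$ is, and therefore $\cFun(X,\C)$ is a well-defined Kan complex functorial in $X$ and $\C$. The pairing $(X,\C)\mapsto \cFun(X,\C)$ will serve as our enrichment over Kan complexes for $\infty$-bicategories.

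For the forward direction, assume that $f:X\to Y$ is a bicategorical equivalence. Since the cartesian product is a left Quillen bifunctor and every object is cofibrant, the functor $\Fun(-,\C):\Ss^{\op}\to\Ss$ sends monomorphisms to bicategorical fibrations; by Ken Brown's lemma applied to the trivial cofibrations it also sends bicategorical equivalences between cofibrant objects to bicategorical equivalences between $\infty$-bicategories. Hence $f^*:\Fun(Y,\C)\to\Fun(X,\C)$ is a bicategorical equivalence. By Remark~\ref{r:underlying} the induced map $\Fun(Y,\C)^{\thi}\to\Fun(X,\C)^{\thi}$ is an equivalence of $\infty$-categories, and restricting to the maximal Kan subcomplexes yields the desired equivalence $\cFun(Y,\C)\to\cFun(X,\C)$.

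For the reverse direction, I would first reduce to a map between $\infty$-bicategories. Choose a fibrant replacement $j_Y:Y\hookrightarrow\hat Y$, a trivial cofibration, and factor the composite $j_Y\circ f$ as a trivial cofibration $j_X:X\hookrightarrow\hat X$ followed by a bicategorical fibration $\hat f:\hat X\to\hat Y$, so that $\hat X$ is also fibrant. Applying the already-established forward direction to $j_X$ and $j_Y$ shows that the hypothesis transfers to $\hat f$, so $\hat f^*:\cFun(\hat Y,\C)\to\cFun(\hat X,\C)$ is a Kan equivalence for every $\infty$-bicategory $\C$. Taking $\C=\hat X$ produces a $g:\hat Y\to\hat X$ with $g\circ \hat f\simeq \Id_{\hat X}$ in $\cFun(\hat X,\hat X)$; taking $\C=\hat Y$ and observing that $\hat f^*(\hat f\circ g)=\hat f\circ g\circ\hat f\simeq\hat f=\hat f^*(\Id_{\hat Y})$ gives, by injectivity of $\hat f^*$ on $\pi_0$, that $\hat f\circ g\simeq\Id_{\hat Y}$. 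Thus $\hat f$ admits a two-sided homotopy inverse.

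The principal subtlety is the final step, namely concluding that such a homotopy equivalence between fibrant objects is a genuine bicategorical equivalence. I would handle this by noting that in a cartesian closed model category in which every object is cofibrant, the Kan complex $\cFun(A,B)$ with $B$ fibrant computes the correct derived mapping space, so connected components of $\cFun$ record maps in the homotopy category. A two-sided homotopy inverse therefore witnesses $\hat f$ as an isomorphism in the homotopy category of $\Ss$, and hence $\hat f$, and with it $f$, is a bicategorical equivalence.
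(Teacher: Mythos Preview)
Your argument is correct and follows essentially the same route as the paper's proof: both use the cartesian closed structure for the forward direction, reduce to fibrant objects for the converse, and then run the standard Yoneda argument with \(\C=X\) and \(\C=Y\) to produce a two-sided homotopy inverse. The only difference is in the final step: you assert that \(\cFun(A,B)\) ``computes the correct derived mapping space'' as a general fact about cartesian closed model categories, whereas the paper supplies the concrete justification by choosing a contractible Kan complex \(K\supseteq\Del^1\), noting that \(X\times K_\sharp\) is a cylinder object for \(X\), and observing that an edge in \(\cFun(X,X)\) extends to a map \(K\to\cFun(X,X)\) and hence yields a genuine left homotopy \(X\times K_\sharp\to X\). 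Your phrasing is slightly misleading because \(\cFun\) is not a general construction in cartesian closed model categories; the statement you need is specific to \(\Ss\) and is precisely what the paper's cylinder argument establishes. Otherwise the two proofs coincide.
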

\begin{proof}
	If \(f\colon X \lrar Y\) is a bicategorical equivalence then \(\Fun(Y,\C) \lrar \Fun(X,\C)\) is a bicategorical equivalence for every \(\infty\)-bicategory \(\C\) since \(\Ss\) is cartesian closed and every object is cofibrant. It then follows from Remark~\ref{r:underlying} that~\eqref{e:f-map} is an equivalence of Kan complexes. 
	
	Now suppose that~\eqref{e:f-map} is an equivalence of Kan complexes. By the argument above the property that~\eqref{e:f-map} is an equivalence of Kan complexes will hold for any arrow which is levelwise bicategorically equivalent to \(f'\) in \(\Ss\). We may hence assume without loss of generality that \(X\) and \(Y\) are fibrant, that is, they are \(\infty\)\nbd-bi\-categories. Taking \(\C=X\) in~\eqref{e:f-map} we may conclude there exists a map \(g\colon Y \lrar X\) such that \(gf\colon X \lrar X\) is in the same component as \(\Id\colon X \lrar X\) in \(\cFun(X,X)\). There is hence an arrow \(e\colon\Del^1 \lrar \cFun(X,X)\) such that \(e(0)=\Id\) and \(e(1)=gf\). Let \(K\) be a contractible Kan complex which contains a non-degenerate arrow \(\Del^1 \subseteq K\), which we can write as \(v \lrar u\). Then we can extend the map \(e \colon \Del^1 \lrar \cFun(X,X)\) to a map \(\wtl{e}\colon K \lrar \cFun(X,X)\). By adjunction we thus get a map \(H\colon X \times K \lrar X\) such that \(H_{|X \times \{v\}}=\Id\) and \(H_{|X \times \{u\}}=gf\). Since the map \(X \times K \lrar X\)
	induced by the canonical morphism
	\(K \to \Del^0\) is a bicategorical equivalence and the compositions  \(X \times \{v\} \lrar X \times K \lrar X\) and \(X \times \{u\} \lrar X \times K \lrar X\) are both the identity 
	on \(X\), we may consider \(X \times K\) as a cylinder object for \(X\) in \(\Ss\), so that \(gf\) is homotopic to the identity with respect to the bicategorical model structure. We now claim that \(fg\colon Y \lrar Y\) is homotopic to the identity on \(Y\). To see this, we note that by the above we have that \(fgf\colon X \lrar Y\) is in the same component as \(f\colon X \lrar Y\) in \(\cFun(X,Y)\). Using that~\eqref{e:f-map} is an equivalence of Kan complexes for \(\C=Y\) we may conclude that \(fg\colon Y \lrar Y\) is in the same component as \(\Id\colon Y \lrar Y\) in \(\cFun(Y,Y)\). Arguing as above we get that \(fg\) is homotopic to the identity with respect to the bicategorical model structure. We may hence conclude that \(f\) is an isomorphism in \(\Ho(\Ss)\) and hence a bicategorical equivalence, as desired.
\end{proof}

\section{Gray products and lax natural transformations}
\label{sec:gray-lax}

\subsection{The Gray product}\label{sec:gray-product}
In this section we define the \ndef{Gray product} of two scaled simplicial sets. 
In what follows, when we say that a \(2\)-simplex \(\sig\colon \Del^2 \to X\) \ndef{degenerates along} \(\Del^{\{i, i+1\}} \subseteq \Del^2\) (for \(i=0,1\)) we mean that \(\sig\) is degenerate and \(\sig_{|\Del^{\{i,i+1\}}}\) is degenerate. This includes the possibility that \(\sig\) factors through the surjective map \(\Del^2 \to \Del^1\) which collapses \(\Del^{\{i, i+1\}}\) as well as the possibility that \(\sig\) factors through \(\Del^2 \to \Del^0\).

\begin{define}\label{d:gray}
	Let \((X,T_X),(Y,T_Y)\) be two scaled simplicial sets. The \ndef{Gray product} \((X,T_X) \otimes (Y,T_Y)\) is the scaled simplicial set whose underlying simplicial set is the cartesian product of \(X \times Y\) and such that a \(2\)-simplex \(\sig\colon \Del^2 \to X \times Y\) is thin if and only if the following conditions hold:
	\begin{enumerate}[leftmargin=*]
		\item
		\(\sig\) belongs to \(T_X \times T_Y\); 
		\item
		either the image of \(\sig\) in \(X\) degenerates along \(\Del^{\{1,2\}}\) or the image of \(\sig\) in \(Y\) degenerates \(\Del^{\{0,1\}}\).
	\end{enumerate}
\end{define}

\begin{prop}\label{p:associative}
	The natural associativity isomorphisms of the cartesian product of marked simplicial set are also isomorphisms of scaled simplicial sets for the Gray product, and hence the Gray product gives a monoidal structure on \(\Ss\). 
\end{prop}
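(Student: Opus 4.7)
The plan is to reduce associativity to a short Boolean identity between the two scalings on the cartesian product $X \times Y \times Z$. Since both $(X \otimes Y) \otimes Z$ and $X \otimes (Y \otimes Z)$ have $X \times Y \times Z$ as underlying simplicial set, identified via the cartesian associator, I need only verify that the two sets of thin triangles coincide. Once that is done, the pentagon identity is inherited from the cartesian product, and for unitality I will take the unit to be $\Del^0$ with its unique scaling: the degeneracy clause in Definition~\ref{d:gray} is automatically satisfied whenever one of the factors is a point, so the canonical isomorphisms $X \cong X \times \Del^0 \cong \Del^0 \times X$ respect the Gray scaling.

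To carry out the comparison I fix a triangle $\sigma \colon \Del^2 \to X \times Y \times Z$ and write $\sigma_X, \sigma_Y, \sigma_Z$ for its three projections. The elementary input I will use repeatedly is that a triangle in a cartesian product degenerates along $\Del^{\{i,i+1\}}$ iff each of its projections does. Adopting the shorthand $a_1, b_1, c_1$ (resp.\ $a_0, b_0, c_0$) for the assertions that $\sigma_X, \sigma_Y, \sigma_Z$ degenerate along $\Del^{\{1,2\}}$ (resp.\ $\Del^{\{0,1\}}$), unpacking Definition~\ref{d:gray} twice yields that $\sigma$ is thin in $(X \otimes Y) \otimes Z$ iff each of $\sigma_X, \sigma_Y, \sigma_Z$ is thin and
\[ (a_1 \vee b_0) \wedge \bigl( (a_1 \wedge b_1) \vee c_0 \bigr) \]
holds, while for $X \otimes (Y \otimes Z)$ the corresponding condition is
\[ \bigl( a_1 \vee (b_0 \wedge c_0) \bigr) \wedge (b_1 \vee c_0). \]

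A short distribute-and-absorb computation (using in particular that $a_1 \wedge b_0 \wedge b_1 \leq a_1 \wedge b_1$ and $b_0 \wedge b_1 \wedge c_0 \leq b_0 \wedge c_0$) will reduce both expressions to the common formula
\[ (a_1 \wedge b_1) \vee (a_1 \wedge c_0) \vee (b_0 \wedge c_0), \]
at which point the cartesian associator is an isomorphism of scaled simplicial sets, as required. I do not anticipate any genuine obstacle here; the only point requiring care is keeping straight the asymmetry between the "$\{1,2\}$ on the left factor" and "$\{0,1\}$ on the right factor" clauses of the Gray scaling.
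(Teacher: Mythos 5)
Your proposal is correct and follows essentially the same route as the paper: both reduce associativity to checking that the two scalings on $X \times Y \times Z$ coincide, and both arrive at the same common description, namely that a triangle is thin iff all three projections are thin and $(a_1 \wedge b_1) \vee (a_1 \wedge c_0) \vee (b_0 \wedge c_0)$ holds. The paper leaves the verification as ``direct inspection,'' whereas you spell out the Boolean computation (and the unit), which is a harmless elaboration of the same argument.
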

\begin{proof}
Let \((X,T_X),(Y,T_Y),(Z,T_Z)\) be three scaled simplicial sets. We have to check that the thin 2-simplices of \(((X,T_X)\otimes (Y,T_Y))\otimes (Z,T_Z)\) are the same as those of \((X,T_X)\otimes((Y,T_Y)\otimes (Z,T_Z))\). Indeed, direct inspection shows that both sets of thin 2-simplices coincide with the set of those \((\alpha,\beta,\gamma) \in T_X \times T_Y \times T_Z\) such that at least one of the following three conditions hold:
\begin{enumerate}
\item
both \(\alpha\) and \(\beta\) degenerate along \(\Del^{\{1,2\}}\);
\item
both \(\beta\) and \(\gamma\) degenerate along \(\Del^{\{0,1\}}\); or
\item
\(\alpha\) degenerates along \(\Del^{\{1,2\}}\) and \(\gamma\) degenerate along \(\Del^{\{0,1\}}\).
\end{enumerate}
\end{proof}

\begin{rem}\label{r:unit}
	The \(0\)-simplex \(\Del^0\) can be considered as a scaled simplicial set in a unique way, and serves as the unit of the Gray product. Consequently, if \(X\) is any \ndef{discrete} scaled simplicial set then \(X \otimes Y \cong X \times Y\) and \(Y \otimes X \cong Y \times X\) for any scaled simplicial set \(Y\).
\end{rem}

\begin{rem}\label{r:symmetric}
	The Gray product is \ndef{not symmetric} in general. Instead, there is a natural isomorphism
	\[ X \otimes Y \cong (Y^{\op} \otimes X^{\op})^{\op}. \]
\end{rem}

\begin{example}\label{ex:lax-square}
	Consider the Gray product \(X = \Del^1 \otimes \Del^1\). Then \(X\) has exactly two non-degenerate triangles \(\sig_1,\sig_2\colon \Del^2 \to X\), where \(\sig_1\) sends \(\Del^{\{0,1\}}\) to \(\Del^{\{0\}} \times \Del^1\) and \(\Del^{\{1,2\}}\) to \(\Del^1 \times \Del^{\{1\}}\), and \(\sig_2\) sends \(\Del^{\{0,1\}}\) to \(\Del^1 \times \Del^{\{0\}}\) and \(\Del^{\{1,2\}}\) to \(\Del^{\{1\}} \times \Del^1\). By definition we see that \(\sig_2\) is thin in \(X\) but \(\sig_1\) is not. If \(\C\) is an \(\infty\)-bicategory then a map \(p \colon X \to \C\) can be described as a diagram in \(\C\) of the form
	\[
	 \begin{tikzcd}[column sep=3em, row sep=large]
	  x \ar[r, "f_0"] \ar[d, "g_0"'] \ar[rd, "h"{description}, ""{swap, name=diag}] &
	  y \ar[d, "g_1"] \\
	  z \ar[r, "f_1"{swap}] & w
	  \ar[Rightarrow, from=diag, to=2-1]
	  \ar[from=2-1, to=1-2, phantom, "\simeq"{description, pos=0.75}]
	 \end{tikzcd}
	\]
	whose upper right triangle is thin (here \(f_i = p_{|\Del^1 \times \{i\}}\) and \(g_i = p_{|\{i\} \times \Del^1}\)). We thus have an invertible \(2\)-cell \(h \x{\simeq}{\Longrightarrow} g_1 \circ f_0\) and a non-invertible \(2\)-cell \(h \Rightarrow f_1 \circ g_0\). Such data is essentially equivalent to just specifying a single non-invertible \(2\)-cell \(g_1 \circ f_0 \Rightarrow f_1 \circ g_0\). We may hence consider such a square as a \ndef{oplax-commutative} square, or a square which commutes up to a prescribed \(2\)-cell.
\end{example}

It is straightforward to verify that the Gray product preserves cofibrations and colimits separately in each variable. Consequently, one my associate with \(\otimes\) a right and a left mapping objects, which we shall denote by \(\LMap(X,Y)\) and \(\RMap(X,Y)\) respectively. More explicitly, an \(n\)-simplex of \(\LMap(X,Y)\) is given by a map of scaled simplicial sets
\[ \Del^n \otimes X \longrightarrow Y .\]
A \(2\)-simplex \(\Del^2 \otimes X \to Y\) of \(\LMap(X,Y)\) is thin if it factors through \((\Del^2)^{\sharp} \otimes X\). Similarly, an \(n\)-simplex of \(\RMap(X,Y)\) is given by a map of scaled simplicial sets
\[ X \otimes \Del^n \longrightarrow Y \]
and the scaling is determined as above.

Let \(\C\) be an \(\infty\)-bicategory and \(K\) a scaled simplicial set. We will see below that the scaled simplicial sets \(\LMap(K,\C)\) and \(\RMap(K,\C)\) are in fact \(\infty\)\nbd-bi\-categories. The objects of \(\LMap(K,\C)\) correspond to functors \(K \to \C\) and by Example~\ref{ex:lax-square} we may consider morphisms in \(\LMap(K,\C)\) as \ndef{oplax natural transformations}. If we take \(\RMap(K,\C)\) instead then the objects are again functors \(K \to \C\), but now the edges will correspond to \ndef{lax natural transformations}. For example, if \(K = \Del^1\) and \(f_0,f_1\) are two morphisms in \(\C\) then an arrow in \(\LMap(\Del^1,\C)\) from \(f_0\) to \(f_1\) is a square in \(\C\) of the form
\begin{equation}\label{e:square}
\begin{tikzcd}[column sep=3em, row sep=large]
x \ar[r, "f_0"] \ar[d, "g_0"'] \ar[rd, "h"{description}, ""{swap, name=diag}] &
y \ar[d, "g_1"] \\
z \ar[r, "f_1"{swap}] & w
\ar[Rightarrow, from=diag, to=2-1]
\ar[from=2-1, to=1-2, phantom, "\simeq"{description, pos=0.75}]
\end{tikzcd}
\end{equation}
with the upper right triangle thin. This can be considered as a \(2\)-cell \(g_1 \circ f_0 \Rightarrow f_1 \circ g_0\). On the other hand, an arrow in \(\RMap(\Del,\C)\) from \(f_0\) to \(f_1\) is a square of the form~\ref{e:square} whose \ndef{lower left} triangle is thin, i.e., a \(2\)-cell in the other direction \(f_1 \circ g_0 \Rightarrow g_1 \circ f_0\).

\begin{rem}\label{r:assoc}
	Let \(X\), \(Y\) and \(Z\) be scaled simplicial sets. The associativity isomorphism of the Gray product yields a natural isomorphism
	\[ \LMap(X,\RMap(Y,Z)) \cong \RMap(Y,\LMap(X,Z)) .\]
\end{rem}

\subsection{Comparison with the Gray product of stratified sets}\label{sec:comparison}

In his extensive work~\cite{VerityWeakComplicialI}, Verity constructs a Gray tensor product in the setting of stratified sets. In particular, for two stratified sets \((X,t_X),(Y,t_Y)\), the stratified set \((X,t_X) \otimes (Y,t_Y)\) has as an underlying marked simplicial set the product of the underlying marked simplicial sets, while a 2-simplex \((\sigma_X,\sigma_Y)\colon \Delta^2\to X\times Y\) is marked in \((X,t_X) \otimes (Y,t_Y)\) if and only if
\begin{enumerate}
\item \(\sigma_X\) and \(\sigma_Y\) are marked in \(X\) and \(Y\) respectively;
\item either \((\sigma_X)_{|\Delta^{\{1,2\}}}\) is marked in \(X\) or \((\sigma_Y)_{|\Delta^{\{0,1\}}}\) is marked in \(Y\).
\end{enumerate}
The marking on higher simplices are defined in a similar manner, though they do not play a significant role if one is only considering stratified sets up to 2-complicial weak equivalence.
Our goal in the present subsection is to compare the Gray product defined in \S\ref{sec:gray-product} with the above Gray tensor product of stratified sets via the left Quillen equivalence
\[ \Ss \to \st_2 \]
established in~\cite{Equivalence}.

\begin{rem}
The Gray product of stratified sets recalled above is associative, but does not preserve colimits in each variable, and in particular cannot be a left Quillen bifunctor. In~\cite{VerityWeakComplicialI}, Verity also considers a variant of the above definition which preserve colimits in each variable but is not associative. By contrast, the Gray tensor product of scaled simplicial sets is simultaneously associative and a left Quillen bifunctor, as we will establish in Theorem~\ref{t:gray-left-quillen} below.
\end{rem}

In what follows, it will be useful to consider several equivalent variants of the Gray tensor product on scaled simplicial sets. 
Let \((X,T_X),(Y,T_Y)\) be scaled simplicial sets and let \(T_{\gr} \subseteq T_X \times T_Y\) denote the collection of triangles which are thin in the Gray product \((X,T_X) \otimes (Y,T_Y)\), see Definition~\ref{d:gray}. Let \(T_- \subseteq T_X \times T_Y\) denote the subset consisting of those pairs of thin triangles \((\sig_X,\sig_Y)\)
for which either both \(\sig_X\) and \(\sig_Y\) are degenerate or at least one of \(\sig_X,\sig_Y\) degenerates to a \ndef{point}. On the other hand, let \(T_+ \subseteq T_X \times T_Y\) be the set of those pairs of thin triangles \((\sig_X,\sig_Y)\) 
for which 
either \((\sig_X)_{|\Del^{\{1,2\}}}\) is degenerate or \((\sig_Y)_{|\Del^{\{0,1\}}}\) is degenerate. Then we have a sequence of inclusions
\[ T_- \subseteq T_{\gr} \subseteq T_+.\]
We claim that these three choices for the collection of thin triangles in \(X \times Y\) yield equivalent models for the Gray tensor product. More precisely, we have the following: 

\begin{prop}\label{p:variants}
	The maps \((X \times Y,T_-) \hrar (X \times Y,T_{\gr}) \hrar (X \times Y,T_+)\) are both scaled anodyne.
\end{prop}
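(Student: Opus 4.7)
The plan is to treat both inclusions by the same strategy: for every new thin triangle $\sigma$ to be adjoined, I exhibit a witnessing $3$-simplex $\tau\colon\Delta^3\to X\times Y$ such that $\sigma$ appears as one of its faces and the remaining three faces already lie in the source scaling. The saturated-closure Lemma preceding this Proposition shows that the ``saturation-at-a-face'' map $(\Delta^3,T_i)\to\Delta^3_\sharp$ --- where $T_i$ denotes the scaling on $\Delta^3$ consisting of every $2$-simplex except $\Delta^{\{0,i,3\}}$, for $i\in\{1,2\}$ --- is scaled anodyne. Pushing this map out along $\tau$ adjoins exactly the face $\tau|_{\Delta^{\{0,i,3\}}}=\sigma$ to the scaling, and a transfinite composition over all new triangles then presents each of the two inclusions as a composition of pushouts of scaled anodyne maps.

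For the first inclusion, Remark~\ref{r:symmetric} reduces the work, up to the symmetry $X\otimes Y\cong(Y^{\op}\otimes X^{\op})^{\op}$, to the case $\sigma=(s_1\alpha_X,\sigma_Y)$ with $\alpha_X$ a non-degenerate edge of $X$ and $\sigma_Y$ a non-degenerate thin triangle of $Y$. I would take $\tau:=(s_1s_1\alpha_X,\,s_0\sigma_Y)$; the simplicial identities then give $d_1\tau=\sigma$, while $d_0\tau$ has $X$-component constant (hence point-degenerate), and both $d_2\tau$ and $d_3\tau$ have both $X$- and $Y$-components degenerate. All three helper faces therefore lie in $T_-$, and the saturation move with $i=2$ adjoins $\sigma$.

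For the second inclusion, a similar symmetry argument reduces to the case $\sigma=(\sigma_X,\sigma_Y)$ with $\sigma_X$ a non-degenerate thin triangle of $X$ whose $\Delta^{\{1,2\}}$-face is degenerate, and $\sigma_Y$ an arbitrary thin triangle of $Y$. I would take $\tau:=(s_1\sigma_X,\,s_0\sigma_Y)$. A direct simplicial-identity computation yields $d_1\tau=\sigma$, $d_0\tau$ point-degenerate on the $X$-side (so in $T_-$), $d_2\tau=(\sigma_X,\,s_0d_1\sigma_Y)$ whose $Y$-component degenerates along $\Delta^{\{0,1\}}$ (so in $T_{\gr}$), and $d_3\tau=(s_1d_2\sigma_X,\,s_0d_2\sigma_Y)$ with both components degenerate (also in $T_{\gr}$). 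Thus all three helper faces lie in $T_{\gr}$, and again the saturation move with $i=2$ adjoins $\sigma$.

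The main obstacle will be the case analysis pinning down exactly which pairs lie in $T_{\gr}\setminus T_-$ and in $T_+\setminus T_{\gr}$, and then verifying for each case that the non-$\sigma$ faces of the chosen $\tau$ actually lie in the correct source scaling, with no circular dependency on triangles not yet adjoined. The specific shapes of $\tau$ above --- always one point-degenerate face together with two faces having both factors degenerate --- ensure that the helper faces sit in $T_-$ (first inclusion) or in $T_{\gr}$ (second inclusion), so that no ordering of the transfinite composition is required.
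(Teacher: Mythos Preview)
Your argument is correct and is essentially the paper's own proof, just presented differently. The paper factors the argument through two ``universal'' small examples (its Lemmas preceding this Proposition): for the first inclusion it treats the case \((X,Y)=(\Delta^2,\Delta^1)\) and exhibits the \(3\)-simplex spanned by \((0,0),(1,0),(2,0),(2,1)\); for the second inclusion it treats the case where \(X\) is \(\Delta^2\) with the edge \(\Delta^{\{1,2\}}\) collapsed and \(Y=\Delta^2\), and exhibits the \(3\)-simplex spanned by \((0,0),(1,0),(1,1),(2,2)\). It then obtains the general statement by pushing out along the classifying maps \((\sigma_X,\beta_Y)\) resp.\ \((\sigma_X,\sigma_Y)\). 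If you unwind these pushouts, the resulting \(3\)-simplices in \(X\times Y\) are exactly your \((s_2\sigma_X,\,s_0s_0\beta_Y)\) and \((s_1\sigma_X,\,s_0\sigma_Y)\) (your Case~1 for the first inclusion is the \(\op\)-dual of the paper's example). Both arguments invoke the same scaled anodyne maps \((\Delta^3,T_i)\to\Delta^3_\sharp\) for \(i\in\{1,2\}\).

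One small point of care: your appeal to the isomorphism \(X\otimes Y\cong(Y^{\op}\otimes X^{\op})^{\op}\) to dispatch the dual cases should be read as transporting the \emph{construction} of \(\tau\), not the conclusion, since it is not immediate that \((-)^{\op}\) preserves the class of scaled anodyne maps (the generating set in Definition~\ref{d:anodyne} is not visibly \(\op\)-closed). Concretely, applying \(\op\) to your \(\tau\) for Case~1 yields a pushout of \((\Delta^3,T_1)\to\Delta^3_\sharp\) rather than of \((\Delta^3,T_2)\to\Delta^3_\sharp\), and both are scaled anodyne; so this works, but it is worth saying explicitly. The paper glosses over the same point.
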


The proof of Proposition~\ref{p:variants} will require a couple of lemmas.

\begin{lemma}\label{l:lem-1}
	In the situation of Proposition~\ref{p:variants}, if \((X,T_X)=\Del^2_{\flat}\) and \((Y,T_Y) = \Del^1_{\flat}\) then the map \((X \times Y,T_-) \to (X \times Y,T_{\gr})\) is scaled anodyne.
\end{lemma}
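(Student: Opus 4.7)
The underlying simplicial set $\Delta^2 \times \Delta^1$ is the triangulated prism: it is three-dimensional and its ten non-degenerate $2$-simplices are organized as faces of three non-degenerate tetrahedra $\tau_0, \tau_1, \tau_2$ corresponding to the three $(2,1)$-shuffles. The plan is first to enumerate these triangles and classify each one according to whether it belongs to $T_-$, $T_{\gr}$, or merely $T_+$. Because $T_X$ consists only of degenerate triangles of $\Delta^2$, any non-degenerate triangle of the product in $T_X \times T_Y$ must project to a degenerate triangle on the $\Delta^2$-factor. This cuts the candidate thin triangles down to an explicit short list, and an inspection of the definitions of $T_-$ and $T_{\gr}$ shows that $T_{\gr} \setminus T_-$ is a small, explicit finite set of non-degenerate triangles, each sitting inside one of the three tetrahedra.

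I would then add the triangles in $T_{\gr} \setminus T_-$ to the scaling one at a time by pushouts of the saturation morphism~(ii) of Definition~\ref{d:anodyne}. Since the underlying simplicial set is three-dimensional, every $4$-simplex of $\Delta^2 \times \Delta^1$ is degenerate, and each such pushout is realized along a composite
\[
\Delta^4 \xrightarrow{\ s\ } \Delta^3 \xrightarrow{\ \tau_k\ } \Delta^2 \times \Delta^1
\]
for a suitable degeneracy $s$ and one of the three tetrahedra $\tau_k$. The effect of~(ii) is then to promote a specific $2$-face of $\tau_k$ to a thin triangle, provided certain other faces of $\tau_k$ (or their degenerate collapses under $s$) are already thin. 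Many of the five prerequisite triangles in~(ii) become degenerate $2$-simplices of the prism after composition with $s$ and are hence automatically thin; the remaining non-degenerate prerequisites must be checked against $T_-$ or against triangles added in earlier steps.

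The main obstacle is expected to be the combinatorial bookkeeping of this construction. For each triangle in $T_{\gr} \setminus T_-$, one must produce a degenerate $4$-simplex and an identification of its vertices with $\{0,1,2,3,4\}$ matching the template of~(ii), and one must order the additions so that all prerequisite thin triangles are available at the moment of application. The natural symmetry $X \otimes Y \cong (Y^{\op} \otimes X^{\op})^{\op}$ from Remark~\ref{r:symmetric} should be useful here for halving the casework, by reflecting a given configuration to a mirror configuration in the opposite prism. Assembling these pushouts into a finite chain realizes the inclusion $(X \times Y, T_-) \hookrightarrow (X \times Y, T_{\gr})$ as an element of the weak saturation of $\bS$, which is exactly the assertion that it is scaled anodyne.
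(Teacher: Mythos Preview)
Your outline is workable in principle but much more elaborate than what the paper actually does. The paper's argument is a single step: it observes that $T_{\gr}\setminus T_-$ consists of exactly \emph{one} triangle, namely the $2$-simplex $\sigma\colon \Del^2 \to \Del^2 \times \Del^1$ whose projection to $\Del^2$ is the identity and whose projection to $\Del^1$ is the surjection degenerating along $\Del^{\{0,1\}}$. It then picks the $3$-simplex of the prism spanned by $(0,0),(1,0),(2,0),(2,1)$; the face $\Del^{\{0,1,3\}}$ is $\sigma$, while the remaining three faces already lie in $T_-$. One pushout along the scaled anodyne map
\[
(\Del^3,\{\Del^{\{1,2,3\}},\Del^{\{0,2,3\}},\Del^{\{0,1,2\}}\}) \hookrightarrow \Del^3_\sharp
\]
from~\cite[Remark~3.1.4]{LurieGoodwillie} finishes the proof.

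By contrast, you plan to enumerate all ten non-degenerate triangles, realize each addition as a pushout of the $\Del^4$-saturation map~(ii) through a degeneracy $\Del^4\to\Del^3$, and invoke the $(-)^{\op}$ symmetry to halve casework. This detour is unnecessary: the $\Del^3$ ``three-thin-faces-implies-fourth'' move is already known to be scaled anodyne, there is only one missing triangle, and nothing is paired by the symmetry. A more substantive issue is your opening sentence: you take $(X,T_X)=\Del^2_\flat$ literally and conclude that any thin triangle must project to a degenerate simplex on the $\Del^2$-factor. But the single triangle in $T_{\gr}\setminus T_-$ projects to the \emph{identity} of $\Del^2$; the paper's proof and its use in Proposition~\ref{p:variants} show that the intended scaling on the first factor is $\Del^2_\sharp$. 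With your restriction you would exclude precisely the triangle that needs to be added.
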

\begin{proof}
	We note that in this case \(T_{\gr}\) contains exactly one triangle that is not in \(T_-\), namely, the triangle \(\sig \colon \Del^2 \to \Del^2 \times \Del^1\) whose projection to \(\Del^2\) is the identity and whose projection to \(\Del^1\) is surjective and degenerates along \(\Del^{\{0,1\}}\). Let \(\Del^3 \to \Del^2 \times \Del^1\) be the \(3\)-simplex spanned by the vertices \((0,0),(1,0),(2,0),(2,1)\). By definition we have that \(\sig_{|\Del^{\{1,2,3\}}},\sig_{|\Del^{\{0,2,3\}}}\) and \(\sig_{|\Del^{\{0,1,2\}}}\) are in \(T_-\), while \(\sig_{|\Del^{\{0,1,3\}}}\) is exactly the \(2\)-simplex which is in \(T_{\gr}\) but not in \(T_-\). We then get that the map \((X \times Y,T_-) \to (X \times Y,T_{\gr})\) is a pushout along the inclusion 
	\[(\Del^3,\{\Del^{\{1,2,3\}},\Del^{\{0,2,3\}},\Del^{\{0,1,2\}}\}) \hrar \Del^3_{\sharp}\] 
	which is scaled anodyne by~\cite[Remark 3.1.4]{LurieGoodwillie}.
\end{proof}

\begin{lemma}\label{l:lem-2}
	Let 
	\[ (Z_0,T_0) = \biggl(\Del^2 \coprod_{\Del^{\{1,2\}}}\Del^0\biggr) \otimes \Del^2 \] 
	and 
	\[ (Z_2,T_2) = \Del^2 \otimes \biggl(\Del^2 \coprod_{\Del^{\{0,1\}}}\Del^0\biggr) .\] 
	For \(i=0,2\), let \(S_i\) be the set of \(2\)-simplices consisting of \(T_i\) together with the image of the diagonal \(2\)-simplex \(\diag \colon \Del^2 \to \Del^2 \times \Del^2\). Then the map of scaled simplicial sets
	\[ (Z_i,T_i) \longrightarrow (Z_i,S_i) \]
	is scaled anodyne for \(i=0,2\).
\end{lemma}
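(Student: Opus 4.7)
The plan is to exhibit $(Z_i, T_i) \hookrightarrow (Z_i, S_i)$ as a pushout along the saturation map~(ii) of Definition~\ref{d:anodyne}, applied to a carefully chosen (necessarily degenerate) 4-simplex of $Z_i$. I will treat $i = 0$ in detail; the case $i = 2$ follows by applying $(-)^\op$ and using the symmetry $X \otimes Y \cong (Y^\op \otimes X^\op)^\op$ from Remark~\ref{r:symmetric} together with the invariance of the generating anodyne set under reversal.

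For $i = 0$, write $K_0 = \Delta^2 \coprod_{\Delta^{\{1,2\}}} \Delta^0$ and denote the image of the collapsed edge by $*$. The new thin triangle $d_0$ has vertices $(0, 0), (*, 1), (*, 2)$; its $K_0$-projection is the unique non-degenerate 2-simplex $\sigma_{K_0}$ of $K_0$, and its $\Delta^2$-projection is the identity. First I would identify a 4-simplex $\tau\colon \Delta^4 \to Z_0$ realizing $d_0$ as one of the two faces added by~(ii), say $\Delta^{\{0,3,4\}}(\tau)$, while ensuring that the five required thin faces $\Delta^{\{0,1,2\}}, \Delta^{\{0,1,3\}}, \Delta^{\{0,2,4\}}, \Delta^{\{1,2,3\}}, \Delta^{\{1,3,4\}}$ all lie in $T_0$. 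A natural candidate is $\tau$ with vertex sequence $(0, 0), (0, 0), (*, 0), (*, 1), (*, 2)$, whose three $\Delta^{\{0,1,k\}}$-faces are degenerate and whose remaining candidate thin faces have $K_0$-projection degenerating along $\Delta^{\{1,2\}}$, placing them in $T_0$ by Definition~\ref{d:gray}. The pushout along~(ii) for such $\tau$ would adjoin both $d_0 = \Delta^{\{0,3,4\}}(\tau)$ and the degenerate, already-thin face $\Delta^{\{0,1,4\}}(\tau)$ to the scaling, yielding exactly $S_0$.

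The hardest step will be verifying that the required face $\Delta^{\{1,3,4\}}(\tau)$ lies in $T_0$: for the natural candidate above it has vertices $(*, 0), (*, 1), (*, 2)$, with $K_0$-projection constant at $*$ (thin in $K_0$) but $\Delta^2$-projection equal to the identity (not thin in $\Delta^2$), so clause~(1) of Definition~\ref{d:gray} fails. I expect overcoming this to require either a refined choice of 4-simplex---exploiting the freedom in lifting the $K_0$-projection among the several 4-simplices of $K_0$ sharing a common vertex pattern, possibly by including an auxiliary vertex such as $(0,1)$---or an iterated application of~(ii) and its opposite (both scaled anodyne, by invariance of the model structure under $(-)^\op$) to first enlarge $T_0$ through its saturated closure before the final step producing $d_0$. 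The case $i = 2$ is then dualized via $(-)^\op$.
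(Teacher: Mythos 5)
Your overall strategy---realizing the diagonal triangle as one of the faces adjoined by the saturation move (ii) of Definition~\ref{d:anodyne} (or a version of it derived from it) applied to a single simplex of \(Z_0\) whose other relevant faces are already thin---is the strategy of the paper. But your concrete argument does not close, and the failure is not quite where you locate it. First, an indexing slip: for your candidate \(\tau=\bigl((0,0),(0,0),(*,0),(*,1),(*,2)\bigr)\), the face \(\Del^{\{1,3,4\}}(\tau)\) has vertices \((0,0),(*,1),(*,2)\), not \((*,0),(*,1),(*,2)\). Its \(K_0\)-component agrees with that of \(\Del^{\{0,3,4\}}(\tau)\) (both are restrictions of a \(4\)-simplex of \(K_0\) whose first two vertices coincide, hence both equal the non-degenerate \(2\)-simplex \(\sig_{K_0}\)), and likewise for the \(\Del^2\)-component; so \(\Del^{\{1,3,4\}}(\tau)\) \emph{is} the diagonal triangle you are trying to adjoin, the required thin face is the new face itself, and the move is circular. (Separately: for the lemma to feed into Proposition~\ref{p:variants} the \(\Del^2\)-factors must carry the maximal scaling, so the identity \(2\)-simplex of \(\Del^2\) is thin there and the real obstruction throughout is clause~(2) of Definition~\ref{d:gray}, not clause~(1) as you assert.) Your proposed remedies (an auxiliary vertex \((0,1)\), or passing through the saturated closure) are left entirely speculative, so as written there is a genuine gap at the heart of the proof.

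The repair is to place the repeated vertex in the other slot: take \(\tau\) to be the image in \(Z_0\) of the \(4\)-simplex of \(\Del^2\times\Del^2\) with vertices \((0,0),(1,0),(1,0),(1,1),(2,2)\). Then \(\Del^{\{0,1,2\}}\), \(\Del^{\{0,1,3\}}\), \(\Del^{\{1,2,3\}}\) and \(\Del^{\{1,3,4\}}\) all have \(K_0\)-component degenerating along \(\Del^{\{1,2\}}\), the faces \(\Del^{\{0,2,4\}}\) and \(\Del^{\{0,1,4\}}\) have \(\Del^2\)-component degenerating along \(\Del^{\{0,1\}}\), and \(\Del^{\{0,3,4\}}\) is exactly the diagonal, so a single pushout along (ii) adjoins precisely \(S_0\setminus T_0\). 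The paper does the equivalent thing one dimension lower: it uses the \(3\)-simplex \((0,0),(1,0),(1,1),(2,2)\) (the face \(\Del^{\{0,2,3,4\}}\) of the simplex above) together with the derived scaled anodyne map \(\bigl(\Del^3,\{\Del^{\{0,1,2\}},\Del^{\{0,1,3\}},\Del^{\{1,2,3\}}\}\bigr)\hrar\Del^3_{\sharp}\) of~\cite[Remark 3.1.4]{LurieGoodwillie}, whose missing face \(\Del^{\{0,2,3\}}\) is the diagonal. Finally, your reduction of \(i=2\) to \(i=0\) via \((-)^{\op}\) tacitly assumes that scaled anodyne maps are stable under \(\op\); this is not evident from the generators (class (iii) of Definition~\ref{d:anodyne} is not \(\op\)-invariant, and the paper's notion of saturation imposes (ii) and its opposite as separate conditions). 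It is safer to run the mirror argument directly, using the other variant \((\Del^3,T_1)\to\Del^3_{\sharp}\) supplied by the same remark, which is what the paper's ``similar manner'' amounts to.
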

\begin{proof}
	We prove the claim for \(i=0\). The proof for \(i=2\) proceeds in a similar manner. 
	Let \(\sig \colon\Del^3 \to \Del^2 \times \Del^2\) be the \(3\)-simplex spanned by the vertices \((0,0),(1,0),(1,1),(2,2)\), and let \(\sig' \colon \Del^3 \to Z_0\) be its image in \(Z_0\). 
	By definition we have that \(\sig'_{|\Del^{\{1,2,3\}}},\sig'_{|\Del^{\{0,1,3\}}}\) and \(\sig'_{|\Del^{\{0,1,2\}}}\) are thin in \(Z_0\), and \(\sig'_{|\Del^{\{0,2,3\}}}\) is the image of the diagonal \(2\)-simplex. We then get that the map \((Z_0,T_0) \to (Z_0,S_0)\) is a pushout along the inclusion 
	\[(\Del^3,\{\Del^{\{1,2,3\}},\Del^{\{0,1,3\}},\Del^{\{0,1,2\}}\}) \hrar \Del^3_{\sharp}\] 
	which is scaled anodyne by~\cite[Remark 3.1.4]{LurieGoodwillie}.	
\end{proof}

\begin{proof}[Proof of Proposition~\ref{p:variants}]
	The inclusion \((X \times Y,T_-) \hrar (X \times Y,T)\) can be obtained as a sequence of pushouts along the scaled anodyne maps described in Lemma~\ref{l:lem-1}, while the inclusion \((X \times Y,T) \hrar (X \times Y,T_+)\) can be obtained as a sequence of pushouts along the scaled anodyne maps described in Lemma~\ref{l:lem-2}. 
\end{proof}

\begin{cor}[Comparison of scaled and 2-complicial Gray products]
\label{c:comparison-scaled-stratified}
For scaled simplicial sets \((X,T_X),(Y,T_Y)\), the natural inclusion \(i_{X,Y}\colon\iota ((X,T_X)\otimes (Y,T_Y))\to \iota(X,T_X) \otimes \iota (Y,T_Y)\) is a weak equivalence in \(\st_2\).
\end{cor}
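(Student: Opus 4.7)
The plan is to factor the inclusion $i_{X,Y}$ through an intermediate stratified simplicial set and analyze each factor separately. First I will unravel the definitions to identify the underlying simplicial sets and the markings on both sides. Both $\iota((X,T_X)\otimes(Y,T_Y))$ and $\iota(X,T_X)\otimes\iota(Y,T_Y)$ have underlying simplicial set $X \times Y$. Since $\iota$ is left adjoint to the forgetful functor $U$, and $U$ remembers only the scaling, $\iota$ produces stratified simplicial sets with only degenerate $n$-simplices marked for $n\neq 2$, and with exactly the given scaling on $2$-simplices. Substituting this description into Verity's definition of the Gray product recalled above, one verifies by direct inspection that the set of marked $2$-simplices of $\iota(X,T_X)\otimes\iota(Y,T_Y)$ coincides exactly with the set $T_+$ of \S\ref{sec:comparison}.

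Given this, the inclusion $i_{X,Y}$ factors canonically as
\[
\iota(X\times Y,T_{\gr}) \xrightarrow{\alpha} \iota(X\times Y,T_+) \xrightarrow{\beta} \iota(X,T_X)\otimes\iota(Y,T_Y),
\]
where $\alpha$ enlarges the $2$-markings from $T_{\gr}$ to $T_+$ while being the identity on all other data, and $\beta$ is the identity on the $2$-markings and may only introduce additional markings in dimensions $\geq 3$.

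The first map $\alpha$ is the image under $\iota$ of the inclusion $(X\times Y,T_{\gr}) \hookrightarrow (X\times Y,T_+)$, which is scaled anodyne by Proposition~\ref{p:variants}. Since the Quillen equivalence~\eqref{equiv} makes $\iota$ into a left Quillen functor, it preserves trivial cofibrations, so $\alpha$ is a trivial cofibration in $\st_2$. For the second map $\beta$, I will invoke the fact that $\st_2$ is the model category of $2$-trivial saturated complicial sets: all simplices of dimension $\geq 3$ become marked in the fibrant objects, and maps that freely adjoin markings to individual $n$-simplices for $n \geq 3$ are trivial cofibrations. Consequently, $\beta$ will be expressible as a transfinite composition of pushouts of such generating trivial cofibrations and hence will itself be a trivial cofibration.

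Composing, $i_{X,Y} = \beta \circ \alpha$ will be a trivial cofibration in $\st_2$ and in particular a weak equivalence, as required. The main obstacle is the careful handling of $\beta$: this requires spelling out the higher-dimensional markings produced by Verity's Gray product in $\iota(X,T_X)\otimes\iota(Y,T_Y)$ and invoking the $2$-trivial saturation axioms to justify that adjoining these markings to $\iota(X\times Y,T_+)$ is a weak equivalence in $\st_2$.
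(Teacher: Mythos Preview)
Your approach is essentially the same as the paper's: both reduce to Proposition~\ref{p:variants} together with the fact that \(\iota\) is left Quillen. The paper's proof is more direct, asserting that \(i_{X,Y}\) literally coincides with \(\iota\) applied to the inclusion \((X\times Y,T_{\gr})\hookrightarrow(X\times Y,T_+)\); you instead factor through this map and then separately dispose of any extra markings in dimensions \(\geq 3\) via the 2-triviality axiom, so your step \(\beta\) is either the identity (if the paper's identification is exact) or an easy 2-triviality argument, and the two proofs agree in substance.
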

\begin{proof}
The map \(i_{X,Y}\) coincides with \(\iota((X\times Y,T_{\gr}) \hookrightarrow (X\times Y,T_+))\). Since \(\iota\) is a left Quillen functor and \((X\times Y,T_{\gr}) \hookrightarrow (X\times Y,T_+)\) is scaled anodyne by Proposition \ref{p:variants} this map is a weak equivalence in the 2-complicial model structure.
\end{proof}

We finish this section with some additional results concerning the relation between the Gray product of scaled simplicial sets and invertible arrows in \(\infty\)-bicategories (see Definition~\ref{d:equivalence}).

\begin{prop}\label{p:invertible-leg}
Let \(\C = (\ovl{\C},T_{\C}),\D=(\ovl{\D},T_{\D})\) be two \(\infty\)-bicategories. Let \(T_{\minisimeq} \subseteq T_{\C} \times T_{\D}\) denote the subset containing those triangles \((\alp,\beta)\) such that either \(\alp|_{\Del^{\{1,2\}}}\) is invertible in \(\C\) or \(\beta|_{\Del^{\{0,1\}}}\) is invertible in \(\D\). Then the map
\[ \C \otimes \D \to (\ovl{\C} \times \ovl{\D},T_{\minisimeq})\]
is bicategorical equivalence.
\end{prop}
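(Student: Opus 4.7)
The plan is to apply Lemma~\ref{l:test-equiv}: since the map is the identity on underlying simplicial sets, it suffices to show that for every \(\infty\)-bicategory \(\E\) the induced inclusion
\[
\cFun((\ovl{\C} \times \ovl{\D}, T_{\minisimeq}), \E) \hookrightarrow \cFun(\C \otimes \D, \E)
\]
of Kan complexes is an equivalence, equivalently that every map \(f : \C \otimes \D \to \E\) automatically sends each triangle in \(T_{\minisimeq}\) to a thin triangle in \(\E\). By the opposite symmetry \(X \otimes Y \cong (Y^{\op} \otimes X^{\op})^{\op}\) of Remark~\ref{r:symmetric}, attention may be restricted to triangles \((\alpha,\beta) \in T_{\minisimeq} \setminus T_{\gr}\) for which \(e := \alpha|_{\Del^{\{1,2\}}}\) is invertible in \(\C\); the dual case \(\beta|_{\Del^{\{0,1\}}}\) invertible is handled by the same argument after applying the \(\op\) duality.

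For such \((\alpha,\beta)\) the aim is to construct a witness \(3\)-simplex \(\tau : \Del^3 \to \ovl{\C} \times \ovl{\D}\) with \((\alpha,\beta)\) realised as the \(\Del^{\{0,2,3\}}\)-face and the remaining three \(2\)-faces in \(T_{\gr}\); the \(3\)-simplex saturation consequence of the saturation axiom~\ref{d:anodyne}(ii), as recalled in the Remark preceding Definition~\ref{d:equivalence}, then forces \(f(\alpha,\beta)\) to be thin in \(\E\). The \(\C\)-component \(\delta : \Del^3 \to \C\) of \(\tau\) is obtained by choosing an inverse \(e^{-1}\) of \(e\) together with an invertibility \(2\)-simplex \(\iota\) with edges \(e, e^{-1}, \id_{\alpha(1)}\) (both available by fibrancy of \(\C\)) and extending \(\alpha\), \(\iota\), and the degenerate triangle \(s_1(\alpha|_{\Del^{\{0,1\}}})\) along an inner horn to a 3-simplex all of whose 2-faces are thin in \(\C\). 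The \(\D\)-component is the degenerate 3-simplex \(\beta \circ \pi\) for a surjection \(\pi : \Del^3 \to \Del^2\) collapsing the new edge \(\Del^{\{2,3\}}\).

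The principal obstacle is that one auxiliary \(2\)-face of \(\tau\) takes the form \((\eta, s_1(\beta|_{\Del^{\{0,2\}}}))\), where \(\eta\) is thin in \(\C\) with \(\eta|_{\Del^{\{1,2\}}} = e^{-1}\) invertible but non-degenerate; this face lies in \(T_{\minisimeq}\setminus T_{\gr}\). I would resolve this by an induction on a complexity measure of \((\alpha,\beta)\), for instance the dimension of the image of \(\beta : \Del^2 \to \D\). The base cases---\(\beta\) constant (contained in \(T_{-} \subseteq T_{\gr}\) in the notation of Proposition~\ref{p:variants}) and \(\beta = s_0(\beta|_{\Del^{\{1,2\}}})\) (already in \(T_{\gr}\) because \(\beta|_{\Del^{\{0,1\}}}\) is degenerate)---are immediate. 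In the inductive step, the troublesome face has image in the edge \(\beta|_{\Del^{\{0,2\}}}\) of \(\D\), of strictly smaller dimension than that of \(\beta\), and so is covered by the inductive hypothesis. The remaining \(1\)-dimensional subcase \(\beta = s_1(b)\) does not reduce under this same construction and must be treated either by an alternative choice of witness (for example a \(4\)-simplex obtained from axiom~\ref{d:anodyne}(ii), with the backward \(\D\)-edges replaced by appropriate degenerate paths) or by combining the argument with its \(\op\)-dual applied simultaneously. Organising this combined bookkeeping, together with the explicit inner-horn extensions producing \(\delta\), is the central technical content of the proof.
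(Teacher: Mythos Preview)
Your reduction via Lemma~\ref{l:test-equiv} is fine in spirit, but the core argument has a genuine gap that you yourself flag: the induction on the dimension of the image of \(\beta\) does not terminate. In the subcase \(\beta = s_1(b)\) your \(3\)-simplex construction produces an auxiliary face of the \emph{same} complexity, and the two escape routes you propose (``an alternative choice of witness'' or ``combining with the op-dual'') are not carried out. As stated this is a sketch of a strategy, not a proof; and in fact neither suggestion closes the gap without essentially reproducing the paper's construction. There is also a bookkeeping slip: if \((\alpha,\beta)\) is to be the \(\Del^{\{0,2,3\}}\)-face of \(\tau\), the surjection \(\pi\colon\Del^3\to\Del^2\) cannot collapse \(\Del^{\{2,3\}}\), since then the \(\D\)-component of that face would be \(s_1(\beta|_{\Del^{\{0,2\}}})\) rather than \(\beta\).

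The paper avoids the induction entirely by a single \(4\)-simplex construction. First it passes from \(T_{\gr}\) to the larger set \(T_+\) of Proposition~\ref{p:variants}, which already gives a bicategorical equivalence; it then suffices to show every \((\alpha,\beta)\in T_{\minisimeq}\) lies in the saturated closure of \(T_+\). Assuming \(\beta|_{\Del^{\{0,1\}}}\) is invertible, one uses that \(\D^{\thi}\) is an \(\infty\)-category to find \(\rho\colon\Del^4\to\D^{\thi}\) with \(\rho|_{\Del^{\{0,1,4\}}}=\beta\) and with \(\rho|_{\Del^{\{0,2\}}}\), \(\rho|_{\Del^{\{1,3\}}}\) both degenerate. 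Pairing this with \(\eta=\alpha\circ\pi\) for the surjection \(\pi\colon\Del^4\to\Del^2\) sending \(0\mapsto 0\), \(1,2,3\mapsto 1\), \(4\mapsto 2\), one checks directly that every triangle in the set \(T\) of Definition~\ref{d:anodyne}(ii) is sent by \((\eta,\rho)\) into \(T_+\): for each \(\Del^{\{i,j,k\}}\in T\) either \(\rho|_{\Del^{\{i,j\}}}\) is degenerate or \(\eta|_{\Del^{\{j,k\}}}\) is degenerate. A single application of the saturation axiom then places \((\alpha,\beta)=(\eta,\rho)|_{\Del^{\{0,1,4\}}}\) in the saturated closure of \(T_+\). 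The point is that working with \(T_+\) (degeneracy of a single edge) rather than \(T_{\gr}\) (degeneracy of the whole triangle) is exactly what makes the verification go through without recursion.
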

\begin{proof}
Let \(T_{\gr} \subseteq T_{\C} \times T_{\D}\) be the collection of triangles which are thin in \(\C \otimes \D\). By Proposition~\ref{p:variants} it will suffice to show that \(T_{\minisimeq}\) is contained in the saturated closure of \(T_+\). For this, let \((\alp,\beta) \in T_{\minisimeq}\) be a triangle, so that either \(\alp|_{\Del^{\{1,2\}}}\) is invertible in \(\C\) or \(\beta|_{\Del^{\{0,1\}}}\) is invertible in \(\D\). Assume first that \(\beta|_{\Del^{\{0,1\}}}\) is invertible. 
Since \(\D\) is an \(\infty\)-bicategory we have that \(\D^{\thi}\) is an \(\infty\)-category (which contains the triangle \(\beta\)) and hence we may find a map 
\(\rho \colon \Del^4 \to \D^{\thi}\) 
such that \(\rho_{|\Del^{\{0,1,4\}}} = \bet\), \(\rho_{|\Del^{\{0,2\}}}\) is degenerate on \(\beta(0)\) and \(\rho_{|\Del^{\{1,3\}}}\) is degenerate on \(\beta(1)\). Let \(\eta\colon \Del^4 \to \C\) be the composed map \(\eta \colon\Del^4 \x{\pi}{\to} \Del^2 \x{\alp}{\to} \C\) where \(\pi \colon \Del^4 \to \Del^2\) is the map which is given on vertices by \(\pi(0)=0\), \(\pi(1)=\pi(2)=\pi(3)=1\) and \(\pi(4)=2\). Let 
\begin{equation}\label{e:saturation-anodyne}
	 (\Del^4,T) \longrightarrow (\Del^4,T \cup \{\Del^{\{0,1,4\}},\Del^{\{0,3,4\}}),
\end{equation}
be the scaled anodyne map of Definition~\ref{d:anodyne}~(ii).
	We now claim that the map \((\eta,\rho)\colon \Del^4 \to \ovl{\C} \times \ovl{\D}\) sends \(T\) to \(T_+\). Indeed, if \(\Del^{\{i,j,k\}} \in T\) then either \((i,j)\in \{(0,2),(1,3)\}\), in which case \(\rho(\Del^{\{i,j\}})\) is degenerate, or \(\{j,k\} \subseteq \{1,2,3\}\), in which case \(\eta(\Del^{\{j,k\}})\) is degenerate. It then follows in particular 
	\((\alp,\bet) = (\eta,\rho)(\Del^{\{0,1,4\}})\) 
	is contained in the saturated closure of \(T_+\). 
	Finally, if we assume that it is \(\alp|_{\Del^{\{1,2\}}}\) that is invertible instead of \(\beta|_{\Del^{\{0,1\}}}\), then the argument can be carried out in a symmetric manner except that we need to use the opposite of the map~\eqref{e:saturation-anodyne}. \end{proof}

\begin{cor}\label{c:kan-bicat-gray}
	Let \(\C\) be an \(\infty\)-bicategory and \(K\) a Kan complex. Then the maps 
\[\C \otimes K_{\sharp} \longrightarrow \C \times K_{\sharp}\]
	and 
\[K_{\sharp} \otimes \C \longrightarrow K_{\sharp} \times \C\]
	are scaled anodyne.
\end{cor}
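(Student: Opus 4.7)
The plan is to derive the result from Proposition \ref{p:invertible-leg} after identifying the set \(T_{\minisimeq}\) in the relevant situation with the full product scaling, and then upgrading the conclusion from ``bicategorical equivalence'' to ``scaled anodyne''.

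First I would verify that \(K_{\sharp}\) is an \(\infty\)-bicategory when \(K\) is a Kan complex. The extension property against the inner horn inclusions of Definition \ref{d:anodyne}(i) and the outer-horn-with-collapse inclusions of Definition \ref{d:anodyne}(iii) reduce to plain horn extension in \(K\), which holds by assumption. The saturation axiom (ii) is automatic since every triangle of \(K_{\sharp}\) is thin. Moreover \(K_{\sharp}^{\thi} = K\), which is a Kan complex and hence has groupoidal homotopy category, so every edge of \(K_{\sharp}\) is invertible in the sense of Definition \ref{d:equivalence}.

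Applying Proposition \ref{p:invertible-leg} with \(\D = K_{\sharp}\), the set \(T_{\minisimeq}\) consists of pairs \((\alpha,\beta)\) of thin triangles satisfying the invertibility condition on either \(\alpha|_{\Del^{\{1,2\}}}\) or \(\beta|_{\Del^{\{0,1\}}}\); the latter is automatic by the previous paragraph, so \(T_{\minisimeq} = T_{\C} \times K_2\) coincides with the full scaling of \(\C \times K_{\sharp}\). The map of Proposition \ref{p:invertible-leg} is then precisely \(\C \otimes K_{\sharp} \to \C \times K_{\sharp}\). To upgrade this to scaled anodyne-ness, I would factor it as
\[\C \otimes K_{\sharp} \hookrightarrow (\ovl{\C} \times K, T_+) \hookrightarrow \C \times K_{\sharp}:\]
the first inclusion is scaled anodyne by Proposition \ref{p:variants}, while the construction in the proof of Proposition \ref{p:invertible-leg} — using that the invertible edge always occurs on the \(K_{\sharp}\) leg, so that only the original form of Definition \ref{d:anodyne}(ii) (never its opposite) is ever invoked — exhibits the second inclusion as a transfinite composition of pushouts along scaled anodyne maps. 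The second map \(K_{\sharp} \otimes \C \to K_{\sharp} \times \C\) follows by the symmetric argument (using the opposite of (ii) instead), or equivalently by passing to opposites via Remark \ref{r:symmetric} and invoking the stability of the class of scaled anodyne maps under the op involution on \(\Ss\).

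The main point I expect to require care is this upgrade from trivial cofibration to scaled anodyne-ness: it amounts to checking that the construction in Proposition \ref{p:invertible-leg} does not invoke the full saturated-closure procedure (which only produces trivial cofibrations \emph{a priori}) but only individual pushouts of the generating scaled anodyne map of Definition \ref{d:anodyne}(ii); this is straightforward by inspection of the explicit \((\eta,\rho)\) constructed there.
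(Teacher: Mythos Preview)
Your route is exactly the one the paper intends: the corollary is stated immediately after Proposition~\ref{p:invertible-leg} with no proof, so the argument is meant to be ``take \(\D = K_{\sharp}\), observe every edge is invertible, hence \(T_{\minisimeq}\) is the full product scaling''. Your verification that \(K_{\sharp}\) is an \(\infty\)-bicategory with all edges invertible is the right first step, and your identification of the target map with that of Proposition~\ref{p:invertible-leg} is correct.

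You are also right to flag the gap between the proposition's conclusion (bicategorical equivalence) and the corollary's claim (scaled anodyne); the paper glosses over this. Your upgrade for the first map \(\C \otimes K_{\sharp} \to \C \times K_{\sharp}\) is sound: in that case \(\beta|_{\Del^{\{0,1\}}}\) is always the invertible leg, so the explicit \((\eta,\rho)\) construction in the proof of Proposition~\ref{p:invertible-leg} only ever produces pushouts along the map of Definition~\ref{d:anodyne}(ii) itself, and composing with Proposition~\ref{p:variants} gives scaled anodyne.

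The gap is in your treatment of the second map \(K_{\sharp} \otimes \C \to K_{\sharp} \times \C\). Both of your proposed reductions---running the symmetric argument with the opposite of (ii), or applying \((-)^{\op}\) and invoking stability of scaled anodynes under op---presuppose that the opposite of the generating map (ii) is scaled anodyne. The paper does not establish this, and the generating set \(\bS\) is visibly asymmetric under op (e.g.\ (iii) involves only \(\Lambda^n_0\)-horns). Indeed, the paper's own definition of ``saturated'' explicitly closes under (ii) \emph{and} its opposite, and the lemma on the saturated closure only concludes \emph{trivial cofibration}, not scaled anodyne---suggesting the authors did not regard the opposite of (ii) as automatically scaled anodyne. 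So as written, your argument for the second map only yields a trivial cofibration. That said, the sole use of this corollary in the paper (the proof of Corollary~\ref{c:kan-gray}) requires only ``bicategorical equivalence'', so the weaker conclusion suffices for everything downstream; it may well be that ``scaled anodyne'' in the statement is an overclaim, or that the authors are silently importing op-stability from~\cite{Equivalence}. If you want the full strength for both maps you should either supply a direct proof that the opposite of (ii) lies in the weakly saturated class generated by \(\bS\), or cite a source for op-stability of scaled anodynes.
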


\subsection{The Gray product as a left Quillen bifunctor}\label{sec:gray-is-quillen}

In this section we will prove the main result of the present paper:
\begin{thm}\label{t:gray-left-quillen}
	The Gray tensor product is a left Quillen bifunctor
	\[
	 -\otimes -\colon \Ss \times \Ss \longrightarrow \Ss
	\]
	with respect to the bicategorical model structure.
\end{thm}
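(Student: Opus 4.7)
The plan is as follows. Since the Gray product preserves colimits separately in each variable (both variables are determined pointwise, the underlying simplicial set being the Cartesian product), by the standard criterion for a left Quillen bifunctor (see \cite[Proposition A.2.6.12]{HTT}) it suffices to verify the pushout-product axiom on generating (trivial) cofibrations. Concretely, for cofibrations $i\colon A \hookrightarrow B$ and $j\colon C \hookrightarrow D$, one must show that
\[
i \hat\otimes j \colon (A \otimes D) \coprod_{A \otimes C} (B \otimes C) \longrightarrow B \otimes D
\]
is a cofibration, and is trivial as soon as either $i$ or $j$ is. The cofibration part is immediate: the underlying simplicial-set map is a monomorphism (as the Cartesian product is a left Quillen bifunctor on $\s$), and a triangle in $B \otimes D$ is thin precisely when the conditions of Definition~\ref{d:gray} hold on its image, so any triangle of $B \otimes D$ that is thin and comes from the pushout is already thin in the pushout.

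For the trivial-cofibration part, one reduces to the case where $i$ is a generating cofibration---either $\partial\Delta^n \hookrightarrow \Delta^n$ (with flat scaling) or $\Delta^2_\flat \hookrightarrow \Delta^2_\sharp$---and $j$ is one of the three families of generating scaled anodyne maps of Definition~\ref{d:anodyne}, and symmetrically with the roles of $i$ and $j$ exchanged. For each such combination, the goal is to write $i \hat\otimes j$ as a transfinite composition of pushouts along scaled anodyne maps, so that it lies in the saturation of the generating scaled anodyne class.

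The heart of the argument is a filtration of $\Delta^n \otimes \Delta^m$ (and its variants incorporating the horn-shaped boundaries that appear in the generating anodyne maps) by a carefully chosen sequence of nondegenerate simplices, indexed by shuffles of $[n] \sqcup [m]$. Each step attaches a new simplex of $\Delta^n \otimes \Delta^m$ along an inner horn whose thin $2$-faces are precisely those dictated by Definition~\ref{d:gray}; the scaling is arranged so that the missing face is scaled iff the shuffle step corresponds to a triangle satisfying condition (2) of Definition~\ref{d:gray}. When the missing face is not thin, the attachment is a pushout of an inner horn inclusion of type (i); when the triangle to be scaled requires invoking the saturation axiom, one uses a pushout of type (ii); and ``outer'' situations at the boundary of the filtration are handled by the equivalence-type maps (iii), using that the relevant edge in $\Delta^1 \otimes \Delta^0$ or $\Delta^0 \otimes \Delta^1$ is a degenerate and hence invertible $1$-cell. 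Proposition~\ref{p:variants} is a convenient tool throughout, allowing one to freely pass between the three equivalent scalings $T_-, T_{\gr}, T_+$ whenever a particular step of the argument is easier with more or fewer thin triangles.

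The main obstacle is precisely this combinatorial bookkeeping: for each choice of generating cofibration $i$ and generating anodyne $j$, one must design an ordering on the cells of the codomain of $i \hat\otimes j$ so that, at each step, the attachment is along a scaled anodyne map in $\bS$ (possibly after the reformulation afforded by Proposition~\ref{p:variants}). The cases involving pushout-products of the thinness cofibration $\Delta^2_\flat \hookrightarrow \Delta^2_\sharp$ against the inner horn anodynes, and of two inner horns of small dimension, require the most delicate analysis, as one must verify both that each newly filled horn is inner and that its prescribed thin triangle coincides with the one required by the Gray scaling. Once these cases are handled---which amounts to a finite list of explicit verifications on low-dimensional simplices together with the general shuffle filtration---the pushout-product axiom follows, completing the proof.
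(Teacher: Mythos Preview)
Your outline correctly identifies the main combinatorial engine --- the shuffle filtration showing that $i \hgr j$ is scaled anodyne when $i$ is a generating cofibration and $j$ is one of the generating scaled anodyne maps of Definition~\ref{d:anodyne} --- and this is exactly the content of Proposition~\ref{p:push-prod} in the paper. The use of Proposition~\ref{p:variants} to switch between the scalings $T_-, T_{\gr}, T_+$ is also on the mark.

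There is, however, a genuine gap. You implicitly assume that the maps in Definition~\ref{d:anodyne} generate the trivial cofibrations of the bicategorical model structure. They do not: every map in the weakly saturated class they generate is bijective on vertices, whereas the inclusion $\{\eps\} \hookrightarrow J_\sharp$ (with $J$ the nerve of the walking isomorphism) is a trivial cofibration that is not. The description of the bicategorical model structure from~\cite{Equivalence} gives a generating set for the trivial cofibrations that includes, in addition to the scaled anodyne maps, precisely such $J$-type inclusions. Thus after your combinatorial argument (Proposition~\ref{p:push-prod}) one must still verify that the pushout-products
\[
(\partial\Del^n \hookrightarrow \Del^n)\hgr(\{\eps\}\to J_\sharp)
\quad\text{and}\quad
(\Del^2_\flat \hookrightarrow \Del^2_\sharp)\hgr(\{\eps\}\to J_\sharp)
\]
are bicategorical equivalences. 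The paper does this via Corollary~\ref{c:kan-gray}: since $J_\sharp$ is a maximally scaled Kan complex, the Gray product with $J_\sharp$ is naturally weakly equivalent to the Cartesian product with $J_\sharp$, and the latter pushout-products are trivial cofibrations because $\Ss$ is already known to be Cartesian closed as a model category. Without this step (or an equivalent one), your argument establishes only Proposition~\ref{p:push-prod}, not Theorem~\ref{t:gray-left-quillen}.
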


Combined with Proposition~\ref{p:associative} this implies that \((\Ss,\otimes)\) is a monoidal model category. Passing to underlying \(\infty\)-categories we conclude:

\begin{cor}
The Gray product endows the \(\infty\)-category \(\Cat_{(\infty,2)}\) with a monoidal structure which is compatible with colimits in each variable.
\end{cor}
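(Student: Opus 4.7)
The plan is to pass from the model-categorical statement to the $\infty$-categorical one via the standard machinery that extracts a monoidal $\infty$-category from a monoidal model category. First, I would observe that Theorem~\ref{t:gray-left-quillen}, together with Proposition~\ref{p:associative} and the fact (Remark~\ref{r:unit}) that $\Del^0$ is a strict unit for $\otimes$, exhibits $(\Ss, \otimes, \Del^0)$ as a monoidal model category in the sense of Hovey: the tensor product is strictly associative and unital, and it satisfies the pushout-product axiom. The unit axiom is automatic because every object of $\Ss$ is cofibrant in the bicategorical model structure (cofibrations are the monomorphisms), so the derived and underived tensor products agree, and in particular $\Del^0 \otimes X = X$ is both the underived and the derived unit action.

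The second step is to invoke Lurie's general result that the underlying $\infty$-category of a monoidal model category inherits a canonical monoidal structure, for which the localization functor $\Ss \longrightarrow \Ss[W^{-1}] \simeq \Cat_{(\infty,2)}$ is monoidal in a way that implements $\otimes$ at the level of $\infty$-categories. This identifies $\Cat_{(\infty,2)}$ as an associative algebra in $\Cat_{\infty}$ whose underlying tensor product bifunctor is (the $\infty$-categorical descent of) the Gray product.

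Finally, for the compatibility with colimits, I would argue as follows. For each fixed $X \in \Ss$, the functors $X \otimes (-)$ and $(-) \otimes X$ are left Quillen with respect to the bicategorical model structure by Theorem~\ref{t:gray-left-quillen}, and hence their derived functors preserve all small homotopy colimits. Since every object is cofibrant, these derived functors coincide with the undervied ones and descend directly to the corresponding endofunctors of $\Cat_{(\infty,2)}$. It follows that for every $X \in \Cat_{(\infty,2)}$ both $X \otimes (-)$ and $(-) \otimes X$ preserve small colimits, which is the desired compatibility with colimits separately in each variable.

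There is no real obstacle here beyond correctly invoking the passage from monoidal model categories to monoidal $\infty$-categories; all the genuine content has already been established in Proposition~\ref{p:associative} (strict associativity) and Theorem~\ref{t:gray-left-quillen} (pushout-product axiom).
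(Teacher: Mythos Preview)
Your proposal is correct and matches the paper's approach: the paper simply remarks that Theorem~\ref{t:gray-left-quillen} combined with Proposition~\ref{p:associative} makes \((\Ss,\otimes)\) a monoidal model category, and then states the corollary as an immediate consequence of passing to underlying \(\infty\)-categories, without further elaboration. Your write-up supplies exactly the details (unit axiom via cofibrancy of all objects, Lurie's passage from monoidal model categories to monoidal \(\infty\)-categories, and preservation of colimits via the left Quillen property in each variable) that the paper leaves implicit.
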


We will give the proof of Theorem~\ref{t:gray-left-quillen} below. The following pushout-product property constitutes the principal component of the proof:
\begin{prop}\label{p:push-prod}
Let \(f\colon X \lrar Y\) be a monomorphism of scaled simplicial sets and
\(g\colon Z \lrar W\) be a scaled anodyne map. 
Then the pushout-products
\[
 f \hgr g \colon 
 \bigl( X \otimes W \bigr) \coprod_{X \otimes Z}
 \bigl( Y \otimes Z \bigr) \longrightarrow Y \otimes W
\]
and
\[
 g \hgr f \colon 
 \bigl( W \otimes X \bigr) \coprod_{Z \otimes X}
 \bigl( Z \otimes Y \bigr) \longrightarrow W \otimes Y
\]
are scaled anodyne maps of scaled simplicial sets.
\end{prop}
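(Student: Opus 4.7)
The plan is a reduction to a finite check on generators. Since the Gray product \(\otimes\) commutes with colimits separately in each variable (it is constructed as a subfunctor of the cartesian product on underlying simplicial sets), and both the class of monomorphisms and the class of scaled anodyne maps are generated under transfinite compositions, pushouts, and retracts by small sets, it suffices to verify the conclusion when \(f\) ranges over a generating set \(I\) of monomorphisms of \(\Ss\) and \(g\) ranges over the set \(\bS\) of Definition~\ref{d:anodyne}. We take \(I\) to consist of the boundary inclusions \(\partial\Del^n_{\flat} \hookrightarrow \Del^n_{\flat}\) for \(n \geq 0\) together with the scaling inclusion \(\Del^2_{\flat} \hookrightarrow \Del^2_{\sharp}\). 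One has to treat both pushout-products \(f \hgr g\) and \(g \hgr f\); the two are handled by parallel arguments, using the identification \(X \otimes Y \cong (Y^{\op} \otimes X^{\op})^{\op}\) of Remark~\ref{r:symmetric} together with the analogous generation of scaled anodyne maps by the opposites of elements of \(\bS\), so we focus on \(f \hgr g\).

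The verification then proceeds by case analysis on the type of \(g \in \bS\). When \(g\) is an inner horn of type~(i), the plan is to filter \(Y \otimes \Del^n\) over \((X \otimes \Del^n) \cup (Y \otimes \Lambda^n_i)\) by adding one non-degenerate simplex at a time, via the Joyal-style enumeration of the non-degenerate simplices of the product \(Y \times \Del^n\) by lattice paths in the grid. Careful bookkeeping of this enumeration should show that at each step we fill an inner horn whose distinguished triangle \(\Del^{\{i-1,i,i+1\}}\) is thin in the Gray scaling of Definition~\ref{d:gray}, so that the step is a pushout of a type~(i) generator; at certain stages one may first replace \(\otimes\) by the equivalent \(T_+\)-variant of Proposition~\ref{p:variants} to enlarge the set of thin triangles so that the required horns become fillable. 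When \(g\) is a type-(iii) generator, the same shuffle filtration applies after the edge \(\Del^{\{0,1\}}\) has been collapsed, and each filling step becomes a pushout of a type-(iii) generator. When \(g\) is the saturation map of type~(ii), the underlying simplicial set map is an identity and only the scaling is enlarged; one realizes each newly thin triangle on \(Y \otimes \Del^4\) as a pushout of the anodyne inclusion \((\Del^3, \{\Del^{\{1,2,3\}}, \Del^{\{0,1,3\}}, \Del^{\{0,1,2\}}\}) \hookrightarrow \Del^3_{\sharp}\), via an auxiliary \(3\)-simplex exactly in the spirit of Lemmas~\ref{l:lem-1} and~\ref{l:lem-2}. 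Finally, the scaling mono \(\Del^2_{\flat} \hookrightarrow \Del^2_{\sharp}\) paired with each type of \(g\) again only increases the scaling, and is handled by the same auxiliary-\(3\)-simplex trick.

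The principal obstacle is the type-(i) case paired with \(f = \partial\Del^m \hookrightarrow \Del^m\): one must exhibit an explicit shellable enumeration of the non-degenerate \((m+n)\)-simplices of \(\Del^m \times \Del^n\) lying outside \((\partial\Del^m \times \Del^n) \cup (\Del^m \times \Lambda^n_i)\), together with, for each such simplex, a designated inner horn filling it, and verify that at every intermediate stage the triangle singled out by Definition~\ref{d:anodyne}(i) is already thin in the Gray scaling. The combinatorics are delicate precisely because the Gray scaling of Definition~\ref{d:gray} is strictly smaller than the cartesian product scaling, so the off-the-shelf filtrations used in the Joyal model structure do not directly apply and must be adapted. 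Once this case is settled, the remaining cases follow by the same shuffle filtration, together with the auxiliary-simplex arguments already present in the proofs of Lemmas~\ref{l:lem-1} and~\ref{l:lem-2}.
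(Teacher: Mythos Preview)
Your overall strategy---reduce to generators and attack the horn cases by a shuffle-style filtration of \(\Del^m\times\Del^n\)---is exactly the paper's approach, and you correctly flag the type-(i) horn against a boundary inclusion as the combinatorial heart. The paper's filtration indexes the non-degenerate simplices \(\sigma\) surjecting onto both factors by the position \((p_\sigma,j_\sigma)\) at which the \(\Del^m\)-coordinate first crosses from \(i-1\) to \(i\), orders them by \((\dim\sigma, j_\sigma)\), and checks that each step is a pushout of a type-(i) generator because the distinguished triangle \(\Del^{\{p_\sigma-1,p_\sigma,p_\sigma+1\}}\) always has degenerate image in the second factor along \(\Del^{\{p_\sigma,p_\sigma+1\}}\), so the Gray thinness condition is met \emph{within} \(T_{\gr}\). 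No passage to \(T_+\) is needed, and indeed your proposed replacement would be delicate: scaled anodyne maps do not enjoy 2-out-of-3, so knowing the \(T_+\)-pushout-product is anodyne does not immediately yield the \(T_{\gr}\) one. One residual case (\(m=2\), \(n=1\)) leaves a single thin triangle unaccounted for, and that is mopped up by one auxiliary \(\Del^3\)-pushout at the end.

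There is one genuine gap: your reduction of \(g\hgr f\) to \(f\hgr g\) via \(X\otimes Y\cong (Y^\op\otimes X^\op)^\op\) requires that \((-)^\op\) preserve \emph{scaled anodyne} maps, which is not available. The type-(iii) generators are \(0\)-horns; their opposites are \(n\)-horns with the last edge collapsed, which are not among the generators and are not known to lie in the weakly saturated class generated by \(\bS\) (the bicategorical trivial cofibrations are closed under \((-)^\op\), but the proposition asserts the stronger scaled-anodyne conclusion, which is used downstream). The paper therefore runs the filtration argument twice, once for each orientation; the arguments are genuinely parallel, but not reducible to one another.

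Two places where the paper's route is markedly simpler than yours: for \(f=\Del^2_\flat\hookrightarrow\Del^2_\sharp\) it passes to the \emph{minimal} variant \(T_-\) of Proposition~\ref{p:variants} (not \(T_+\)), under which the pushout-product becomes an isomorphism because every map in \(\bS\) is bijective on vertices. For type~(ii), the pushout-product is an isomorphism whenever \(n\geq 2\); for \(n=1\) the paper exhibits \(f\hgr g\) and \(g\hgr f\) directly as pushouts of the \emph{same} type-(ii) generator along explicit maps \(\Del^4\to\Del^1\times\Del^4\), with no \(\Del^3\)-tricks required.
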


\begin{proof}
We adapt the argument of~\cite[Proposition 3.1.8]{LurieGoodwillie} to the context of Gray products. We can assume that \(f\) is either the inclusion \(\partial \Del^n_{\flat} \hrar \Del^n_{\flat}\) for \(n \geq 0\) or the inclusion or the inclusion \(\Del^2_{\flat} \subseteq \Del^2_\sharp\), and that \(g\) is one of the generating scaled anodyne maps appearing in Definition~\ref{d:anodyne}. The case where \(f\) is the inclusion \(\partial \Del^0 \hrar \Del^0\) is trivial since in this case both \(f \hgr g\) and \(g \hgr f\) are isomorphic to \(g\), see Remark~\ref{r:unit}. If \(f\) is the inclusion \(\Del^2_{\flat} \subseteq \Del^2_\sharp\) then, since all generating anodyne maps in Definition~\ref{d:anodyne} are bijective on vertices, the map \(f \hgr g\) becomes an isomorphism if we replace the collection of thin triangles in the Gray product by its minimalist variant \(T_{-}\) as in \S\ref{sec:comparison}. The statement that \(f \hgr g\) is scaled anodyne can then be deduced from Proposition~\ref{p:variants} (and the same argument works for \(g \hgr f\)). 
We may hence assume that \(f\) is the map \(\partial \Del^n_{\flat} \hrar \Del^n_{\flat}\) for \(n \geq 1\). We now need to address the three different possibilities for \(g\) appearing in Definition~\ref{d:anodyne}.

\begin{enumerate}[leftmargin=*]
	\item[(A)]
	Suppose that \(g\) is the inclusion \(\left(\Lam^m_i,T'\right)  \hrar \left(\Del^m,T\right)\) for \(0 < i < m\), where \(T\) denotes the union of all degenerate edges and \(\{\Del^{\{i-1,i,i+1\}}\}\) and \(T' = T_{|(\Lam^m_i)}\). We argue the case of \(g \hgr f\). The proof for \(f \hgr g\) proceeds in a similar manner. 
 Let
\[ (Z_0,M_0) = \left[(\Del^m,T) \otimes \partial\Del^n_{\flat}\right] \coprod_{(\Lam^m_i,T') \otimes \partial\Del^n_{\flat}} \left[(\Lam^m_i,T') \otimes \Del^n_{\flat}\right] \]
We will extend \((Z_0,M_0)\) to a filtration of \((\Del^m,T) \otimes \Del^n_{\flat}\) as follows.
Let \(S\) denote the collection of all simplices \(\sig\colon \Del^{k_\sig} \to \Del^m \times \Del^n\) with the following properties:
\begin{enumerate}
\item[(i)]
the simplex \(\sigma\) is non-degenerate and induces surjections \(\Delta^{k_{\sigma}} \to \Delta^n\) and \(\Del^{k_{\sigma}} \to \Del^m\) along the projections;
\item[(ii)] 
there exist integers \(0 < p_{\sigma} < k_{\sigma}\) and \(0 < j_\sigma \leq m\) (necessarily unique) such that \(\sigma(p_{\sigma} - 1) = (i-1, j_\sigma)\) and \(\sigma(p_{\sigma}) = (i, j_\sigma)\).
\end{enumerate}
We make the following observation, which is useful to keep in mind during the arguments below: if \(\tau\colon \Del^k \to \Del^m \times \Del^n\) is an arbitrary simplex, then \(\tau\) belongs to \(Z_0\) unless its projection to \(\Del^n\) is surjective and its projection to \(\Del^m\) contains the face opposite \(i\). In the latter case, if the image of \(\tau\) in \(\Del^m\) is exactly the face opposite \(i\) then there is a unique \((k+1)\)-simplex \(\sig \in S\) of which \(\tau\) is a face: indeed, if \(0 \leq p \leq k+1\) is the maximal number such that \(\tau(p-1) = (i-1,j)\) for some \(j \in [m]\), then the only \((k+1)\)-simplex in \(S\) which has \(\tau\) as a face must send \(p\) to \((i,j)\) and have \(\sig\) as its face opposite \(p\). Otherwise, if the projection of \(\tau\) to \(\Del^m\) is surjective and \(p\) is defined in the same manner then either \(\tau\) itself belongs to \(S\) 
or 
\(\tau\) is the face of exactly two simplices \(\sig,\sig'\) which belong to \(S\), one with \(\sig(p)=(i,j)\) so that \(p_{\sig}=p\) and \(j_{\sig}=j\) and one with \(\sig'(p)=(i-1,j+1)\) so that \(p_{\sig'}=p+1\) and \(j_{\sig}=j+1\). In particular, \(\Del^m \times \Del^n\) is obtained from \(Z_0\) by adding all the simplices in \(S\). To proceed, we will need to identify the right order in which to add them.

Choose an ordering \(\sig_1 < ... < \sig_{\ell}\) on \(S\) such that \(a < b\) whenever \(\dim(\sig_a) < \dim(\sig_b)\) or \(\dim(\sig_a) = \dim(\sig_b)\) and \(j_{\sig_a} < j_{\sig_b}\). We then abbreviate \(k_a := k_{\sig_a}, p_a = p_{\sig_a}\) and \(j_a := j_{\sig_a}\). We now observe that for \(a=1,...,\ell\) we have \(p_a < k_a\), otherwise the projection of \(\sig_a\) to the \(\Del^m\) coordinate will not be surjective (since it sends \(p_a\) to \(i<n\)). We then denote by \(T_a \subseteq \Del^{k_a}_2\) the union of all degenerate \(2\)-simplices and the 2-simplex \(\Del^{\{p_a-1,p_a,p_a+1\}}\), and set \(T_a' = T_a \cap \left(\Lam^{k_a}_{p_a}\right)_2\). We now claim that each \(\sig_a \in S\) maps \(T_a\) into the set of thin triangles of \((\Del^m,T) \otimes \Del^n_{\flat}\). Indeed, it suffice to observe that \(\sig_a\) sends the \(2\)-simplex \(\Del^{\{p_a-1,p_a,p_a+1\}}\) to either a degenerate simplex in \(\Del^m\) or to \(\Del^{\{i-1,i,i+1\}}\), and on the other hand sends the same triangle to a 2-simplex of \(\Del^n_{\flat}\) which degenerates along \(\Del^{\{p_a,p_a+1\}}\). In particular, we may view each \(\sig_a\) as a map of scaled simplicial sets
\[ \sig_a\colon (\Del^{k_a},T_a) \to (\Del^m,T) \otimes \Del^n_{\flat}.\]
Now for \(a=1,...,\ell\) let \(Z_a \subseteq \Del^m \times \Del^n\) to be the union of \(Z_0\) and the images of the simplices \(\sig_{a'}\) for \(a' \leq a\), and let \(M_a\) be the union of the images \(\sig_{a'}(T_{a'})\) for \(a' \leq a\). We claim that for \(a=1,...,\ell\) we have a pushout square of the form
\[ \xymatrix{
\left(\Lam^{k_a}_{p_a},T_a'\right) \ar[r]\ar[d] & (Z_{a-1},M_{a-1}) \ar[d] \\
\left(\Del^{k_a},T_a\right) \ar[r]^-{\sig_a} & (Z_a,M_a) \\
}\]
To prove this, it will suffice to show that all the faces of \(\sig_a\) are contained in \(Z_{a-1}\). Indeed, for \(k' \neq p_a-1,p_a\) the restriction of \(\sig_a\) to the face opposite \(k'\) is either contained in \(Z_0\) or is a \((k_a-1)\)-simplex in \(S\). In the latter case it will correspond to an index \(a' < a\) and will be contained in \(Z_{a'} \subseteq Z_{a-1}\). Now suppose that \(k'=p_a-1\). In this case either \(\sig_a\) sends the face opposite \(k'\) to \(Z_0\) or \(p_a \geq 2\) and there exists some \(j' < j_a\) such that \(\sig_a(p_a-2) = (i-1,j')\). In the latter case the face opposite \(k'\) is also the face of a simplex \(\sig_{a'}\colon \Del^{k_{a'}} \to \Del^m \times \Del^n\) in \(S\) with \(k_{a'} = k_a\),\(p_{a'} = p_a-1\) and \(j_{a'}=j' < j_a\). Then \(a' < a\) and this face will again belong \(Z_{a-1}\). Finally, if \(k'=p_a\) then we claim that the image under \(\sig_a\) of the face opposite \(l\) will not belong to \(Z_{a-1}\). To see this, we note that \(\sig_a(p_a+1)\) must be either \((i,j+1),(i+1,j)\) or \((i+1,j+1)\). We then see that in all three cases the the restriction of \(\sig_a\) to the face opposite \(p_a\) does not belong to \(Z_0\) and does not belong to \(S\). In addition, by the observation following the definition of \(S\) above, when \(\sig_a(p_a+1)=(i+1,j),(i+1,j+1)\) the \((k_a-1)\)-simplex in question is the face of a unique simplex in \(S\), namely, \(\sig_a\), and when \(\sig_a(p_a+1) = (i,j+1)\) it is the face of exactly two simplices in \(S\), one of which is \(\sig_a\) and the other is \(\sig_{a'}\) for which \(j_{a'} = j+1\) and so \(a' > a\). We may thus conclude that the the restriction of \(\sig_a\) to the face opposite \(p_a\) does not belong to \(Z_a\). We may hence conclude that the inclusion \((Z_0,M_0) \hrar (Z_{\ell},M_{\ell})\) is scaled anodyne.

Now when \(m \geq 3\) every thin \(2\)-simplex of \((\Del^m,T)\) is contained in \((\Lam^m_i,T)\), and when \(n \geq 2\) every thin 2-simplex of \(\Del^n\) is contained in \(\partial \Del^n\). In either of these cases we have that every thin triangle in \((\Del^m,T) \otimes \Del^n_{\flat}\) is contained in \(Z_0\) and so \((Z_{\ell},M_{\ell})=(\Del^m,T) \otimes \Del^n_{\flat}\) as scaled simplicial sets and the proof is complete. 
In the special case \(n=1\) and \(m=2\) we have
\[
S = \{\sigma_1 < \tau_2 < \tau_1 < \tau_0\},
\]
where \(\sigma_1\) is the triangle of \(\Del^2 \times \Del^1\) with vertices
\[
\big((0,0), (1,0), (2,1)\big)
\]
and \(\tau_j\), \(j=0, 1, 2\) are the \(3\)-simplices given by the map
\[
 \tau_j(l) = \begin{cases} (0,l) & l \leq j , \\ (1,l-1) & l > j .\end{cases} \]
We then see that \(M_{\ell}\) contains all the triangles which are thin in \(\Del^2_{\sharp} \times \Del^1_{\flat}\) except the one with vertices \((0,0),(2,0),(2,1)\). 
To finish the proof of this case we then consider the pushout square 
\[ \xymatrix{
(\Del^3,\{\Del^{\{0,1,2\}},\Del^{\{0,1,3\}},\Del^{\{1,2,3\}}\}) \ar[r]\ar[d] & (Z_{\ell},M_{\ell}) \ar[d] \\
\Del^3_{\sharp} \ar[r] & \Del^2_{\sharp} \otimes \Del^1 \\
}\]
where 
the map \(\Del^3_{\sharp} \lrar \Del^2_{\sharp} \otimes \Del^1\) is the one determined by the chain of vertices \((0,0),(1,0),(2,0),(2,1)\).  
By~\cite[Remark 3.1.4]{LurieGoodwillie} we get that the map \((Z_{\ell},M_{\ell}) \to \Del^2_{\sharp} \otimes \Del^1\) is scaled anodyne.

\item[(B)]
The case where \(g\) is the inclusion
\( \left(\Del^4,T\right) \hrar \left(\Del^4, T \cup \{\Del^{\{0,3,4\}},\Del^{\{0,1,4\}}\}\right) \)
where \(T\) is the set of triangles specified in Definition~\ref{d:anodyne}(B). 
If \(n \geq 2\) then both \(f \hgr g\) and \(g \hgr f\) are isomorphisms of scaled simplicial sets and so we may assume that \(n=1\).

Let us denote the domain of \(f \hgr g\) by \((\Del^1 \times \Del^4,T')\) and the domain of \(g \hgr f\) by \((\Del^4 \times \Del^1,T'')\). Let \(p \colon \Del^4 \to \Del^1\) be the unique map which sends \(0\) to \(0\) and \(1,2,3\) to \(1\) and let \(q\colon \Del^4 \to \Del^1\) be the unique map which sends \(0,1,2\) to \(0\) and \(3\) to \(1\). 
To show that \(f \hgr g\) and \(g \hgr f\) are scaled anodyne we then observe that there are pushout diagrams of the form
\[
\begin{tikzcd}
\left(\Del^4,T\right) \ar[r, "{p \times \Id}"]\ar[d] & (\Del^1 \times \Del^4,T')\ar[d, "{f \hgr g}"] \\
\left(\Del^4, T \cup \{\Del^{\{0,3,4\}},\Del^{\{0,1,4\}}\}\right) \ar[r]   & \Del^1 \otimes \left(\Del^4, T \cup \{\Del^{\{0,3,4\}},\Del^{\{0,1,4\}}\}\right)
\end{tikzcd}
\]
and
\[
\begin{tikzcd}
\left(\Del^4,T\right) \ar[r, "{\Id \times q}"]\ar[d] & (\Del^4 \times \Del^1,T'') \ar[d, "{g \hgr f}"]   \\
\left(\Del^4, T \cup \{\Del^{\{0,3,4\}},\Del^{\{0,1,4\}}\}\right) \ar[r] & \left(\Del^4, T \cup \{\Del^{\{0,3,4\}},\Del^{\{0,1,4\}}\}\right) \otimes \Del^1
	 \end{tikzcd}.
	\]

\item[(C)]
The case where \(g\) is in the inclusion \( \left(\Lam^m_0 \coprod_{\Del^{\{0,1\}}}\Del^0,T\right)  \hrar \left(\Del^m \coprod_{\Del^{\{0,1\}}}\Del^0,T\right) \)
for \(m \geq 3\), where \(T\) denotes the union of all degenerate edges together with the triangle \(\Del^{\{0,1,n\}}\). We prove the case of \(g \hgr f\). The proof for \(f \hgr g\) proceeds in a similar manner. 
		
We argue as in the proof of case (A). Let \(S\) denote the collection of all simplices \(\sig\colon \Del^{k_{\sig}} \to \Del^m \times \Del^n\) with the following properties:
\begin{enumerate}
\item[(i)]
the simplex \(\sig\) is non-degenerate, and induces surjections \(\Del^{k_\sig} \lrar \Del^m\) and \(\Del^{k_\sig} \to \Del^n\);
\item[(ii)]
there exist integers \(0 \leq p_{\sigma} < k_{\sigma}\) and \(0 < j_\sigma \leq m\) (necessarily unique) such that \(\sigma(p_{\sigma}) = (0, j_\sigma)\) and \(\sigma(p_{\sigma}+1) = (1, j_\sigma)\).
\end{enumerate}	
We may then choose an ordering \(\sig_1 < ... < \sig_q\) on \(S\) such that \(a < b\) whenever \(\dim(\sig_a) < \dim(\sig_b)\) or \(\dim(\sig_a) = \dim(\sig_b)\) and \(j_{\sig_a} > j_{\sig_b}\). We then abbreviate \(k_a := k_{\sig_a}, p_a = p_{\sig_a}\) and \(j_a := j_{\sig_a}\). For every index \(a\), let \(T_a \subseteq \Del^{k_a}_2\) denote the collection of all \(2\)-simplices which are either degenerate, have the form \(\Del^{\{p_a-1,p_a,p_a+1\}}\) if \(p_a > 0\), or have the form \(\Del^{\{0,1,m\}}\) if \(p_a = 0\). Let \(T_a' \subseteq T_a\) be the subset of those \(2\)-simplices in \(T_a\) which lie in \(\Lam^{k_a}_{p_a}\). We claim that each \(\sig_a \in S\) maps \(T_a\) into the set of thin triangles in \((\Del^m \coprod_{\Del^{\{0,1\}}}\Del^0,T) \otimes \Del^n_{\flat}\).
	To see this it suffice to observe that \(\sig_a\) always sends the \(1\)-simplex \(\Del^{\{p_a, p_a+1\}}\) to a degenerate \(1\)-simplex in both \(\Del^m \coprod_{\Del^{\{0,1\}}}\Del^0\) and \(\Del^n\).
	
	Now define a sequence of scaled simplicial sets
	\[
	 (Z_0,M_0) \subseteq (Z_1,M_1) \subseteq \dots \subseteq (Z_{\ell},M_{\ell}) \subseteq
	 \biggl(\Del^m \coprod_{\Del^{\{0,1\}}}\Del^0,T\biggr) \otimes\Del^n_{\flat}
	\]
	where \((Z_0,M_{\ell})\) is the domain of \(g \hgr f\) and for every \(a=1,...,\ell\) we let \(Z_a\) be the union of \(Z_0\) with the images of \(\sig_{a'}\) for \(a' \leq a\) and \(M_a\) the union of \(M_0\) with the images \(\sig_{a'}(T_{a'})\) for all \(a' \leq a\).
Arguing as in the case (A) we now observe that 
\((Z_{\ell},M_{\ell}) = \biggl(\Del^m \coprod_{\Del^{\{0,1\}}}\Del^0,T\biggr) \otimes\Del^n_{\flat}\) (note that \(m \geq 3\))
and for \(a=1,...,\ell\) we have a pushout diagram
\[\xymatrix{
\left(\Lam^{k_a}_{p_a},T_a'\right) \ar[r]\ar[d] & (Z_{a-1},M_{a-1}) \ar[d] \\
\left(\Del^{k_a},T_a\right) \ar[r]^{\sig_a} & (Z_a,M_a) \\
}\]
where in the case that \(p_a=0\) the simplex \(\sig_a\) sends \(\Del^{\{0,1\}}\) to a degenerate edge of \(Z_{a-1}\). We may then conclude that each \((Z_{a-1},M_{a-1}) \to (Z_a,M_a)\) is scaled anodyne and so the desired result follows.

\qedhere
\end{enumerate}
\end{proof}

\begin{cor}\label{c:kan-gray}
	Let \((X,T_X)\) be a scaled simplicial set and \(K\) a Kan complex. Then the maps 
	\begin{equation}\label{e:kan-gray-1}
	(X,T_X) \otimes K_{\sharp} \longrightarrow (X,T_X) \times K_{\sharp}
	\end{equation}
	and 
	\begin{equation}\label{e:kan-gray-2}
	K_{\sharp} \otimes (X,T_X) \longrightarrow K_{\sharp} \times (X,T_X)
	\end{equation}
	are trivial cofibrations.
\end{cor}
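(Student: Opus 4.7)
The plan is to reduce the problem to the \(\infty\)\nbd-bicategorical case already handled by Corollary \ref{c:kan-bicat-gray} via a fibrant replacement of \((X, T_X)\). Both maps in question are identities on their underlying simplicial sets and only augment the scaling, so they are monomorphisms, hence cofibrations in \(\Ss\); the content lies in establishing that they are weak equivalences.

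Concretely, for the first map I would choose a trivial cofibration \(j\colon (X, T_X) \to \C\) with \(\C\) an \(\infty\)\nbd-bicategory and consider the naturality square
\[
 \begin{tikzcd}
  (X, T_X) \otimes K_{\sharp} \ar[r] \ar[d, "j \otimes K_{\sharp}"'] & (X, T_X) \times K_{\sharp} \ar[d, "j \times K_{\sharp}"] \\
  \C \otimes K_{\sharp} \ar[r] & \C \times K_{\sharp}.
 \end{tikzcd}
\]
The bottom edge is scaled anodyne by Corollary \ref{c:kan-bicat-gray}, hence a trivial cofibration. The right vertical arrow is a trivial cofibration because the cartesian product is a left Quillen bifunctor on the bicategorical model structure and \(K_{\sharp}\) is cofibrant. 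The left vertical arrow is a trivial cofibration in the same way once one uses that the Gray product is a left Quillen bifunctor, \ie Theorem \ref{t:gray-left-quillen}. By two-out-of-three the top edge is then a weak equivalence, and combined with the cofibration observation it is a trivial cofibration. The second map in the statement is handled symmetrically, exchanging the roles of the two factors and invoking the other half of Corollary \ref{c:kan-bicat-gray}.

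The main subtlety is the reliance on Theorem \ref{t:gray-left-quillen}, which at this point in the text has only been stated. However, no circularity arises: its proof hinges on Proposition \ref{p:push-prod} (already established), and the passage from the pushout-product statement for generating cofibrations and generating trivial cofibrations to the full left Quillen bifunctor statement is a purely formal matter of stability of scaled anodyne maps under retracts and transfinite compositions of pushouts, and does not use Corollary \ref{c:kan-gray}. If one prefers a self-contained argument, an alternative is to adapt the proof of Proposition \ref{p:invertible-leg} directly: factor the map through \((X \times K, T_+)\) using Proposition \ref{p:variants}, and then for each remaining thin triangle \((\alpha, \beta)\) of the target, build the auxiliary map \(\rho\colon\Delta^4 \to K\) using that \(K = (K_{\sharp})^{\thi}\) is an \(\infty\)\nbd-category in which every edge is invertible, and attach the triangle via a pushout along the saturation anodyne of Definition \ref{d:anodyne}(ii). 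This makes the claim independent of Theorem \ref{t:gray-left-quillen} and even strengthens the conclusion to scaled anodyne.
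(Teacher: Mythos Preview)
Your overall strategy—replace \((X,T_X)\) by an \(\infty\)\nbd-bicategory, form the naturality square, and apply two-out-of-three—is exactly what the paper does. The difference lies in how the left vertical map is controlled, and here your argument has a genuine circularity that you dismiss too quickly.

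You claim the proof of Theorem~\ref{t:gray-left-quillen} ``does not use Corollary~\ref{c:kan-gray}'' and follows formally from Proposition~\ref{p:push-prod}. This is not so. Proposition~\ref{p:push-prod} only shows that the pushout-product of a cofibration with a \emph{scaled anodyne} map is scaled anodyne. To deduce the left Quillen bifunctor property one must also handle the generating trivial cofibrations that are not scaled anodyne, namely the pushout-products with \(\{\epsilon\} \to J_{\sharp}\). The paper's proof of Theorem~\ref{t:gray-left-quillen} treats these by invoking Corollary~\ref{c:kan-gray} to identify them, up to equivalence, with the corresponding cartesian pushout-products. So invoking Theorem~\ref{t:gray-left-quillen} here is circular.

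The fix is immediate and is what the paper does: take \(j\colon (X,T_X) \hookrightarrow \C\) to be \emph{scaled anodyne} rather than merely a trivial cofibration. Then \(j \otimes K_{\sharp}\) is scaled anodyne by Proposition~\ref{p:push-prod} applied to \(j\) and \(\emptyset \hookrightarrow K_{\sharp}\), and \(j \times K_{\sharp}\) is scaled anodyne by the cartesian analogue~\cite[Proposition~3.1.8]{LurieGoodwillie}. No appeal to Theorem~\ref{t:gray-left-quillen} is needed, and the rest of your argument goes through unchanged. Your alternative route via an adaptation of Proposition~\ref{p:invertible-leg} would also work and even upgrades the conclusion to ``scaled anodyne'', but it is more laborious than simply choosing the fibrant replacement to be scaled anodyne.
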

\begin{proof}
We prove that~\eqref{e:kan-gray-1} is a bicategorical equivalence, the proof for~\eqref{e:kan-gray-2} proceeds in a similar manner.
Let \(\C\) be an \(\infty\)-bicategory equipped with a scaled anodyne map \((X,T_X) \hrar \C\). We then obtain a commutative square
\[ \xymatrix{
(X,T_X) \otimes K_{\sharp} \ar[r]\ar[d] & (X,T_X) \times K_{\sharp} \ar[d] \\
\C \otimes K_{\sharp} \ar[r] & \C \times K_{\sharp} \\
}\]
in which the vertival maps are bicategorical equivalences by Proposition~\ref{p:push-prod} and~\cite[Proposition 3.1.8]{LurieGoodwillie} and the bottom horizontal map is a bicategorical equivalence by Corollary~\ref{c:kan-gray}. It then follows that the top horizontal map is a bicategorical equivalence as well.
\end{proof}

\begin{proof}[Proof of Theorem~\ref{t:gray-left-quillen}]
	Thanks to Proposition \ref{p:push-prod} and the description of the bicategorical model structure provided in \cite{Equivalence}, we are left with proving that the maps \[(\partial \Delta^n \hookrightarrow	\Delta^n)\hat{\otimes}(\{\epsilon\}\to J_{\sharp}) \ \text{for} \ n\geq 0 \ \text{and} \ \epsilon = 0,1\] together with \[( \Delta^2 \hookrightarrow	\Delta^2_{\sharp})\hat{\otimes}(\{\epsilon\}\to J_{\sharp})\] are weak bicategorical equivalences. But by Corollary~\ref{c:kan-gray} these maps are equivalent to the corresponding ones having \(\hat{\times}\) in place of the Gray tensor product, since both \(\Delta^0\) and \(J_{\sharp}\) are maximally marked Kan complexes. Therefore, the result follows from the the fact that the bicategorical model structure on scaled simplicial sets is cartesian closed, as previously observed.
\end{proof}

\begin{cor}\label{c:map}
	Let \(\C\) be an \(\infty\)-bicategory and \(K\) a scaled simplicial set. Then \(\LMap(K,\C)\) and \(\RMap(K,\C)\) are \(\infty\)-bicategories.
\end{cor}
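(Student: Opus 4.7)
The plan is to exploit the defining adjunctions of the mapping objects together with the pushout-product property established in Proposition~\ref{p:push-prod}. By construction, $\LMap(K,-)$ is right adjoint to $-\otimes K$ and $\RMap(K,-)$ is right adjoint to $K \otimes -$, so for any scaled simplicial set $Z$ one has natural bijections
\[
\Hom_{\Ss}(Z,\LMap(K,\C)) \cong \Hom_{\Ss}(Z \otimes K,\C), \qquad
\Hom_{\Ss}(Z,\RMap(K,\C)) \cong \Hom_{\Ss}(K \otimes Z,\C),
\]
compatibly with the scalings on both sides (indeed, this is exactly how the scaling on $\LMap$ and $\RMap$ was defined).

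To show that $\LMap(K,\C)$ is an $\infty$-bicategory, I would check that it admits extensions along every generating scaled anodyne map $i\colon A \to B$ in $\bS$ (so that the saturation argument then gives extensions along all scaled anodyne maps). Transposing such an extension problem under the first adjunction above converts it into an extension problem for $\C$ along the map $i \otimes \Id_K \colon A \otimes K \to B \otimes K$.

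The key input is then Proposition~\ref{p:push-prod}: applied to the monomorphism $\emptyset \hookrightarrow K$ (in the role of $f$) and to the scaled anodyne map $i$ (in the role of $g$), it immediately yields that $i \otimes \Id_K$ is itself scaled anodyne. Since $\C$ is an $\infty$-bicategory, it admits extensions along all scaled anodyne maps, so the transposed lifting exists, and transposing back produces the required extension for $\LMap(K,\C)$. The case of $\RMap(K,\C)$ is handled in an entirely symmetric manner, using the second adjunction and the fact that $\Id_K \otimes i\colon K \otimes A \to K \otimes B$ is scaled anodyne, which is the other half of Proposition~\ref{p:push-prod}.

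The argument is essentially a formal consequence of the pushout-product theorem, so the substantive work was already done in Proposition~\ref{p:push-prod}; once that is in hand, there is no real obstacle. The only minor point to double-check is that the adjunction isomorphisms above do respect scalings on 2-simplices, but this is immediate from the definition of the thin triangles in $\LMap(K,\C)$ and $\RMap(K,\C)$ as those factoring through $(\Del^2)^\sharp \otimes K$ and $K \otimes (\Del^2)^\sharp$ respectively.
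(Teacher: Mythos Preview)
Your argument is correct and essentially the same as the paper's intended proof. The corollary is placed immediately after Theorem~\ref{t:gray-left-quillen} (the left Quillen bifunctor property), and the implicit reasoning is precisely the adjunction/pushout-product argument you spell out; if anything, your version is slightly more economical in that it invokes only Proposition~\ref{p:push-prod} rather than the full theorem, which suffices since an \(\infty\)-bicategory is by definition a scaled simplicial set with the right lifting property against \(\bS\).
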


\begin{rem}
	Given a pair of vertices \(x,y\) in an \(\infty\)-bicategory \(\C\), the fiber of the projection \(\RMap(\Delta^1,\C)\to \C \times \C\) induced by \(\partial \Delta^1\to \Delta^1\) over \((x,y)\) is naturally isomorphic to the underlying simplicial set of \(\Hom_{\C}(x,y)\) (as defined in Notation \ref{n:mapping}).
\end{rem}

\begin{rem}\label{r:associate}
	The Gray tensor product and the cartesian product do not ``associate'', that is
	\[ K \times (X \otimes Y) \not\cong (K \times X) \otimes Y .\]
	for general scaled simplicial sets \(K,X,Y\). For this reason we also have
	\[ \Map(X \otimes Y,Z) \not\cong \Map(X,\LMap(Y,Z)) \] 
	in general (and similarly for \(\RMap(-,-)\)). However, by Corollary~\ref{c:kan-gray} and Proposition~\ref{p:push-prod} we have that if \(K\) is a Kan complex then the maps 
	\[ K_{\sharp} \times (X \otimes Y) \llar K_{\sharp} \otimes X \otimes Y \longrightarrow (K_{\sharp} \times X) \otimes Y ,\]
	are scaled anodyne (and isomorphisms on the level of the underlying simplicial sets). This means that for every \(\infty\)-bicategory \(\C\) we have an isomorphism of Kan complexes 
	\[ \cFun(X\otimes Y,\C)\cong \cFun(X,\LMap(Y,\C)).\] 
\end{rem}

\section{Oplax functors and the universal property of Gray products}
\label{sec:oplax}

The goal of this section is to characterize the Gray tensor product defined in~\S\ref{sec:gray-lax} by means of a universal property in the \(\infty\)-category of \((\infty,2)\)-categories, along the lines of what is done in Chapter 10 in the Appendix of~\cite{GaitsgoryRozenblyumStudy}. Following the approach of loc.\ cit.\ we will introduce a the notion of an oplax functor of \(\infty\)-bicategories. We will then show that for two \(\infty\)-bicategories \(\C\) and \(\D\), maps from the Gray product \(\C \otimes \D\) to any other \(\infty\)-bicategory \(\E\) can be identified as a suitable subspace of the space of oplax functor \(\C \times \D \to \E\). This opens the door to comparing the Gray product defined here with that of~\cite{GaitsgoryRozenblyumStudy} by comparing the two notions of oplax maps.

\subsection{Normalised oplax functors of 2-categories}

Before we begin, let us recall the classical notion of
\ndef{normalised oplax $2$\nbd-func\-tor} for $2$-categories.
We shall denote the horizontal composition of $1$-cells and $2$-cells
by \(\ast_0\) and the vertical composition of \(2\)-cells by~\(\ast_1\).

	Let $A$ and $B$ be two $2$-categories.
	A \ndef{normalised oplax $2$-functor}\index{normalised oplax $2$-functor}
	$F \colon A \to B$ is given by:
	\begin{itemize}
		\item[-] a map $\Ob(A) \to \Ob(B)$ that
		to any object $x$ of $A$ associates an object
		$F(a)$ of $B$;
		\item[-] a map $\Fl_1(A) \to \Fl_1(B)$ that
		to any $1$-cell $f \colon x \to y$ of $A$ associates
		a $1$-cell $F(f) \colon F(x) \to F(y)$ of $B$;
		\item[-] a map $\Fl_2(A) \to \Fl_2(B)$ that
		to any $2$-cell $\alpha \colon f \to g$ of $A$
		associates a $2$-cell $F(\alpha) \colon F(f) \to F(g)$
		of $B$;
		\item[-] a map that to any composable $1$-cells
		$x \xrightarrow{f} y \xrightarrow{g} z$
		of $A$ associates a $2$-cell
		\[
		F(g, f) \colon F(g\ast_0 f) \to F(g) \ast_0 F(f)
		\]
		of $B$.
	\end{itemize}
	These data are subject to the following coherences:
	\begin{description}
		\item[normalisation] for any object $x$ of $A$ (resp.~any $1$-cell $f$ of $A$)
		we have $F(1_x) = 1_{F(x)}$ (resp.~$F(1_f) = 1_{F(f)}$); moreover for
		any $1$-cell $f\colon x \to y$ of $A$ we have
		\[F(1_y, f) = 1_{F(f)} = F(f, 1_x)\,;\]
		\item[cocycle] for any triple
		$x \xto{f} y \xto{g} z \xto{h} t$
		of composable $1$-cells of $A$ we have
		\[
		\bigl(F(h) \ast_0 F(g, f)\bigr) \ast_1 F(h, g\ast_0 f)
		= \bigl(F(h, g) \ast_1 F(f)\bigr) \ast_1 F(h\ast_0 g, f)\,;
		\]
		
		\item[vertical compatibility] for any pair
		\[
		\begin{tikzcd}[column sep=4.5em]
		a\phantom{'}
		\ar[r, bend left=50, looseness=1.2, "f", ""{below, name=f}]
		\ar[r, "g" description, ""{name=gu}, ""{below, name=gd}]
		\ar[r, bend right=50, looseness=1.2, "h"', ""{name=h}]
		\ar[Rightarrow, from=f, to=gu, "\alpha"]
		\ar[Rightarrow, from=gd, to=h, "\beta"]&
		a'
		\end{tikzcd}
		\]
		of $1$-composable $2$-cells $\alpha$ and $\beta$ of $A$, we have
		$F(\beta\ast_1 \alpha) = F(\beta) \ast_1 F(\alpha)$;
		
		\item[horizontal compatibility] for any pair
		\[
		\begin{tikzcd}[column sep=4.5em]
		\bullet
		\ar[r, bend left, "f", ""{below, name=f1}]
		\ar[r, bend right, "f'"', ""{name=f2}]
		\ar[Rightarrow, from=f1, to=f2, "\alpha"]
		&
		\bullet
		\ar[r, bend left, "g", ""{below, name=g1}]
		\ar[r, bend right, "g'"', ""{name=g2}]
		\ar[Rightarrow, from=g1, to=g2, "\beta"]
		& \bullet
		\end{tikzcd}
		\]
		of $0$-composable $2$-cells $\alpha$ and $\beta$ of $A$,
		we have
		\[
		F(g', f') \ast_1 F(\beta \ast_0 \alpha) =
		\bigl( F(\beta) \ast_0 F(\alpha)\bigr) \ast_1 F(g, f)\,.
		\]
	\end{description}

\begin{rem}\label{rem:app-triangles}
	With the notations of the previous paragraph,
	consider a diagram
	\[
		\begin{tikzcd}[column sep=small]
			x \ar[dr, "g"'] \ar[rr, "f", ""{swap, name=f}] && y \\
			 & y \ar[ur, equal] &
			 \ar[from=f, to=2-2, Rightarrow, shorten >= 3pt, "\alpha"{swap, pos=0.4}, "\simeq"{pos=0.4}] 
		\end{tikzcd}
	\]
	in \(A\) where the \(2\)-cell \(\alpha\) is invertible,
	that is there is a \(2\)-cell \(\beta \colon g \to f\)
	such that \(\beta \ast_1 \alpha = 1_f\) and
	\(\alpha \ast_1 \beta = 1_g\). The conditions of normalisation
	and the vertical compatibility then imply that the normalised oplax \(2\)-functor \(F\) maps the above diagram to a
	diagram
	\[
	\begin{tikzcd}[column sep=small]
		Fx \ar[dr, "Fg"'] \ar[rr, "Ff", ""{swap, name=f}] && Fy \\
		& y \ar[ur, equal] &
		\ar[from=f, to=2-2, Rightarrow, shorten >= 3pt, "F\alpha"{swap, pos=0.4}] 
	\end{tikzcd}
	\]
	in \(B\), where \(F\alpha\) is invertible.
	Said otherwise, the normalised oplax functor \(F\)
	maps a \(2\)-simplex of the Duskin nerve of \(A\)
	of the form above, to a \(2\)-simplex of the Duskin nerve of \(B\) with the same properties. 
\end{rem}

\begin{rem}
Normalised oplax functors of 2-categories are not invariant under biequivalence of \(2\)-categories.
For example, suppose that \(\C\) is a 2-category whose mapping categories are all singletons (2-categories with this property are called \emph{codiscrete}), and \(\D\) is a 2-category with a single object \(\ast\) whose endomorphism category is a monoidal category 
\(\V := \Map_{\D}(\ast,\ast)\). Then it is a well-known 
that normalised oplax functors
\(\C \rightsquigarrow\D\) correspond to \(\V^\op\)-enriched categories having \(\Ob(\C)\) as class of objects, and such that the identity maps \(\mathbf{I}_{\V} \to \Map(x,x)\) are all isomorphisms, where \(\mathbf{I}_{\V}\) denotes the monoidal unit of \(\V\). Let us refer to such enriched categories as \ndef{normalized} categories.
Since the 2-category \(\C\) is codiscrete it is biequivalent to a point. Nonetheless, every normalized \(\V^{\op}\)-enriched category with one object is necessarily trivial (its unique mapping object is \(\mathbf{I}_{\V}\)) while this is not true in general if \(\C\) has more than one object (take for example \(\C\) to be the codiscete 2-category on two objects, \(\V=\Set^\op\), and the normalized category corresponding to the discrete category on two objects).
This phenomenon can be considered as a consequence of the fact that, in general, normalised oplax 2-functors fail to preserve
invertible 1-morphisms. 
\end{rem}

	Given two normalised oplax $2$-functors $F \colon A \to B$
	and $G \colon B \to C$, there is an obvious candidate for
	the composition $GF \colon A \to C$ and one checks that
	this is still a normalised oplax $2$-functor; furthermore,
	the identity functor on a category is clearly an identity
	element for normalised oplax $2$-functor too. Hence,
	there is a category $\widetilde{\nCat{2}}$ with small
	$2$-categories as objects and normalised oplax $2$-functors
	as morphisms.
	
	There is also a standard cosimplicial object $\Delta \to \widetilde{\nCat{2}}$
	inducing a nerve functor
	$\widetilde{N_2} \colon \widetilde{\nCat{2}} \to \s$.
	For any $n \ge 0$, the normalised oplax $2$-functors
	$[n] \to A$ correspond precisely to $2$\nbd-func\-tors
	$\On{n} \to A$
	(see, for instance, \cite[\href{https://kerodon.net/tag/00BE}{Tag 00BE}]{LurieKerodon}), where by \(\On{n}\) we denote
	the \(2\)-truncated Street \(n\)-th oriental (see~\cite{StreetOrientals}).
	
	Hence, we get a triangle diagram of functors
	\[
	\begin{tikzcd}[column sep=small]
	& \s & \\
	\nCat{2} \ar[ru, "N_2"] \ar[rr, hook] &&
	\widetilde{\nCat{2}} \ar[lu, "\widetilde{N_2}"']
	\end{tikzcd} ,
	\]
	where \(N_2\) is the Duskin nerve, which is commutative (up to a canonical isomorphism).

\begin{rem}\label{rem:Duskin_ff}
	It is a standard fact that the functor $\widetilde{N_2}\colon \widetilde{\nCat{2}} \to \s$
	is fully faithful (see, for instance~\cite{BullejosFaroBlancoNerve}, \cite{LackPaoliNerves}
	or~\cite[\href{https://kerodon.net/tag/00AU}{Tag 00AU}]{LurieKerodon}).
	With this at hand, one sees that the preservation of triangles
	of the kind described in Remark~\ref{rem:app-triangles}
	can be realized as an immediate consequence of the simplicial
	interpretation of normalised oplax \(2\)-functors.
\end{rem}

\subsection{Oplax functors of \(\infty\)-bicategories}

The notion of a normalize oplax functor was generalized to the \((\infty,2)\)-categorical setting by Gaitsgory--Rozenblyum in~\cite[Chapter 10]{GaitsgoryRozenblyumStudy}. For this, the authors of loc.\ cit.\ use complete Segal \(\infty\)-categories to model \((\infty,2)\)-categories. More precisely, an \((\infty,2)\)-category \(\C\) is encoded via a cocartesian fibration \(\C^{\oint} \to \Del^\op\) whose classifing functor \(\Del^\op \to \Cat_{\infty}\) is a complete Segal object in~\(\Cat_{\infty}\) in the sense of Definition 1.2.7 and 1.2.10 of~\cite{LurieGoodwillie}. In this model a functor of \((\infty,2)\)-categories is encoded via a map
\begin{equation}\label{e:map-C-int}
\begin{tikzcd}
\C^{\oint} \ar[rr,"\phi"] \ar[dr]&& \D^{\oint} \ar[dl]\\
& \Delta^{\mathrm{op}}
\end{tikzcd}
\end{equation}
over \(\Delta^{\op}\) which preserves cocartesian edges. They then define the notion of an oplax functor of \((\infty,2)\)-categories by weakening the preservation of cocartesian edges condition. More precisely, call a map \(\rho\colon [m] \to [n]\) of \(\Del^{\op}\) \ndef{idle} if the image of \(\rho\) is a segment \(\{i \in [n]| a \leq i \leq b\}\) for some \(a\leq b\) in \([n]\). The authors of~\cite{GaitsgoryRozenblyumStudy} then define the notion of an oplax functor of \((\infty,2)\)-categories to be a map over \(\Del^{\op}\) as in~\eqref{e:map-C-int} which is only assumed to preserve cocartesian edges lying over idle maps.

In the present section we offer a definition of oplax functors in the setting of scaled simplicial sets. We expect the two definitions to be equivalent, proof of which will be the topic of future work. The definition of oplax functors given below will serve us in \S\ref{sec:universal} in order to formulate a universal property of the Gray tensor product. In~\cite{GaitsgoryRozenblyumStudy} the authors use a similar universal property in order to define the Gray product in their setting. In particular, any future comparison between the present notion of lax functors with that of~\cite{GaitsgoryRozenblyumStudy} will automatically yield a comparison of the two notions of Gray products.

\begin{define}\label{d:oplax-functors}
Let \(\C = (\ovl{\C},T_{\C})\) and \(\E = (\ovl{\E},T_{\E})\) be two \(\infty\)-bicategories. We will denote by \(L_{\C} \subseteq T_{\C}\) the collection of those thin triangles \(\sig \in T_{\C}\) such that either \(\sig|_{\Del^{\{0,1\}}}\) or \(\sig|_{\Del^{\{1,2\}}}\) is invertible in \(\C\).  
By an \emph{oplax functor} from \(\C\) to \(\E\) we will mean a map of scaled simplicial sets \(\vphi\colon(\ovl{\C},L_{\C}) \to \E\). The collection of oplax functors can be organized into an \(\infty\)-bicategory \(\Fun_{\oplax}(\C,\E) := \Fun((\ovl{\C},L_{\C}),\E)\) using the internal mapping objects of \(\Ss\), see \S\ref{sec:scaled}. 
\end{define}

We first verify that the above definition is homotopically sound. For this, note first that every map \(\C \to \D\) of \(\infty\)-bicategories sends invertible edges to invertible edges and hence maps \(L_{\C}\) into \(L_{\D}\). We then have the following:

\begin{lemma}\label{l:sound}
If \(\vphi\colon \C \to \D\) is an bicategorical equivalence of \(\infty\)-bicategories then the induced map \((\ovl{\C},L_{\C}) \to (\ovl{\D},L_{\D})\) is a bicategorical equivalence. In particular, in this case for every \(\infty\)-bicategory the restriction functor
\[ \Fun_{\oplax}(\D,\E) \to \Fun_{\oplax}(\C,\E) \]
is an equivalence of \(\infty\)-bicategories.
\end{lemma}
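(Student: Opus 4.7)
My plan is to first reduce the second assertion to the first. Given that $(\ovl{\C},L_{\C}) \to (\ovl{\D},L_{\D})$ is a bicategorical equivalence, applying $\Fun(-,\E)$—which preserves weak equivalences in its first variable for any fibrant $\E$, since the cartesian product is a left Quillen bifunctor (as recalled in \S\ref{sec:preliminaries}) and every scaled simplicial set is cofibrant—will yield that $\Fun_{\oplax}(\D,\E) \to \Fun_{\oplax}(\C,\E)$ is a bicategorical equivalence.

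For the first assertion, my approach is to use that $\vphi$ is a weak equivalence between fibrant-cofibrant objects and therefore a homotopy equivalence. I will choose a homotopy inverse $\psi\colon \D \to \C$ together with cylinder homotopies $H\colon \C \times K \to \C$ and $H'\colon \D \times K \to \D$ witnessing $\psi\vphi \simeq \Id_{\C}$ and $\vphi\psi \simeq \Id_{\D}$, where $K$ denotes a contractible Kan complex with all triangles thin, as employed in the proof of Lemma~\ref{l:test-equiv}. The main technical step, which I expect to be the key obstacle, will be to verify that $H$ restricts to a map of scaled simplicial sets $H_L\colon (\ovl{\C},L_{\C}) \times K \to (\ovl{\C},L_{\C})$, and similarly for $H'$. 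For this, a thin triangle of the cartesian product $(\ovl{\C},L_{\C}) \times K$ is a pair $(\sig_{\C},\sig_K)$ with $\sig_{\C} \in L_{\C}$ and $\sig_K$ arbitrary; such a pair is thin in $\C \times K$, so its image under $H$ is thin in $\C$. Moreover, when, say, $\sig_{\C}|_{\Del^{\{0,1\}}}$ is invertible in $\C$, the edge $(\sig_{\C}|_{\Del^{\{0,1\}}}, \sig_K|_{\Del^{\{0,1\}}})$ is invertible in $\C \times K$ (every edge of the Kan complex $K$ is invertible, and invertibility in product $\infty$-bicategories is componentwise), and this invertibility is preserved by $H$ because any map of $\infty$-bicategories preserves invertible edges (Remark~\ref{r:underlying}). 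Hence $H(\sig_{\C},\sig_K) \in L_{\C}$, and the case where $\sig_{\C}|_{\Del^{\{1,2\}}}$ is invertible is symmetric.

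With $H_L$ in hand, the proof will conclude formally. The inclusion $\{0\} \hookrightarrow K$ is a trivial cofibration in $\Ss$ (since $K$ is bicategorically equivalent to $\Del^0$), and cartesian product with the cofibrant object $(\ovl{\C},L_{\C})$ preserves trivial cofibrations, so $\iota_0\colon (\ovl{\C},L_{\C}) \hookrightarrow (\ovl{\C},L_{\C}) \times K$ is a trivial cofibration. From $H_L \circ \iota_0 = \Id$, two-out-of-three implies $H_L$ is a bicategorical equivalence, and then $H_L \circ \iota_1 = \psi_L\vphi_L$ is one as well. The analogous argument using $H'_L$ shows $\vphi_L\psi_L$ is a bicategorical equivalence, and the two-out-of-six property of weak equivalences in a model category then forces $\vphi_L$ itself to be a bicategorical equivalence.
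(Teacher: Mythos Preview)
Your proof is correct and follows the same strategy as the paper's: construct a homotopy inverse together with explicit homotopies and verify that these descend to the \(L\)-scalings. Your choice of a contractible Kan complex \(K\) as the interval (rather than the paper's \(\Del^1_{\flat}\)) streamlines two steps---since every edge of \(K\) is invertible, the edge \((\sig_{\C}|_{\Del^{\{0,1\}}},\sig_K|_{\Del^{\{0,1\}}})\) is automatically invertible in \(\C\times K_{\sharp}\) and \((\ovl{\C},L_{\C})\times K_{\sharp}\) is a genuine cylinder object, making the conclusion via \(2\)-out-of-\(6\) purely formal---whereas the paper, working with the non-invertible edge \(0\to 1\) in \(\Del^1\), must run a \(2\)-out-of-\(3\) argument inside a commutative square in \(\C^{\thi}\) and then close with the additional observation that the homotopy edges remain invertible in the non-fibrant \((\ovl{\C},L_{\C})\).
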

\begin{proof}
By Lemma~\ref{l:test-equiv} any bicategorical equivalence of \(\infty\)-bicategories admits a homotopy inverse \(\vphi'\colon \D \to \C\) such that \(\vphi \circ \vphi'\) and \(\vphi' \circ \vphi\) are equivalent to the identities in the \(\infty\)-groupoids \(\cFun(\C,\C)\) and \(\cFun(\D,\D)\) respectively. These equivalences are encoded by maps of scaled simplicial sets \(\eta\colon \Del^1_{\flat} \times \C \to \C\) and \(\eta'\colon \Del^1_{\flat} \times \D \to \D\), in which \(\eta(\Del^1_{\flat}\times \{c\})\) (resp. \(\eta'(\Del^1_{\flat}\times \{d\})\)) is invertible in \(\C\) (resp. \(\D\)) for every vertex \(c\) in \(\C\) (resp. every vertex \(d\) in \(\D\)).

As mentioned above, since \(\vphi\) and \(\vphi'\) preserve invertible edges they preserve the oplax scaling \(L_{\C}\) and \(L_{\D}\) on both sides. We now claim that the homotopies \(\eta,\eta'\) also preserve the oplax scaling in the sense that they extend to maps of scaled simplicial sets
\begin{equation}\label{eta-L} 
\Del^1_{\flat} \times (\ovl{\C},L_{\C}) \to (\ovl{\C},L_{\C}) \quad\text{and}\quad \Del^1_{\flat} \times (\ovl{\D},L_{\D}) \to (\ovl{\D},L_{\D}).
\end{equation}
We prove the claim for the map on the left, the argument for that on the right proceeds in a similar manner. Observe that by the definition of \(L_{\C}\) it will suffice to show that \(\eta\) sends every arrow in \(\Del^1 \times \C\) whose \(\C\)-component is invertible to an invertible arrow in \(\C\). Indeed, let \(f\colon x \to y\) be an invertible arrow encoded by a map \(e\colon \Del^1 \to \C\). Consider the composite
\[\sig=(\Id,e)\colon \Del^1_{\flat} \times \Del^1_{\flat} \to \Del^1_{\flat} \times \C \to \C.\]
We note that since all triangles in \(\Del^1_{\flat}\) are degenerate it follows that all triangles in \(\Del^1_{\flat} \times \Del^1_{\flat}\) are thin. In particular \(\sig\) determines a commutative square in \(\C^{\thi}\) of the form
\[ \xymatrix{
x \ar[r]^{f}\ar[d] & y \ar[d] \\
x' \ar[r] & y'\\
}\]
in which the top horizontal arrow and both vertical arrows are invertible. By the 2-out-of-3 property for invertible arrows we deduce that all arrows in this square (including the diagonal arrow \(x \to y'\)) are invertible in \(\C\). We may then conclude that \(\eta\) sends every arrow in \(\Del^1_{\flat} \times \C\) whose \(\C\)-component is invertible to an invertible edge in \(\C\), and in particular restrict to maps of scaled simplicial sets~\eqref{eta-L}. 

To finish the proof, we now note that every invertible arrow in \(\C\) is also invertible when considered in the non-fibrant scaled simplicial set \((\ovl{\C},L_{\C})\) (in the sense of Definition~\ref{d:equivalence}), since the triangles exhibiting their inverses are included in \(L_{\C}\) by definition. In particular, these edges are sent to invertible edges by any map of scaled simplicial sets \((\ovl{\C},L_{\C}) \to \E\). We may thus conclude that for every \(\infty\)-bicategory \(\E\) the inverse functor \(\vphi'\colon \D \to \C\) and the homotopies \(\eta\) and \(\eta'\) determine a homotopy inverse for the restriction functor
\[ \Fun((\ovl{\D},L_{\D}),\E) \to \Fun((\ovl{\C},L_{\C}),\E)\]
which is consequently an equivalence of \(\infty\)-bicategories.

\end{proof}

\begin{rem}
If we restrict to scaled simplicial sets which are the Duskin nerves of 2-categories 
then we recover those normalised oplax 2-functors that preserve equivalences (rather than just identities), which is what one might expect in light of the homotopy soundness established in Lemma~\ref{l:sound}. On the other hand, the fully-faithfulness of the Duskin nerve (see Remark~\ref{rem:Duskin_ff}) might suggests that, for \(\infty\)-bicategories \(\C = (\ovl{\C},T_{\C})\) and \(\D = (\ovl{\D},T_{\D})\), the direct analogue of the notion of a normalised oplax functor of 2-categories should simply be maps \(\ovl{\C} \to \ovl{\D}\) between the underlying simplicial sets. In fact, these automatically send thin triangles in \(\C\) with one external legs degenerate to thin triangles in \(\D\) (cf.\ Remark~\ref{rem:app-triangles}), but not necessarily all thin triangles with one external leg invertible. However, this notion of an oplax functor \(\C \rightsquigarrow \D\) is not homotopically sound, since the operation \(\C \mapsto \ovl{\C}\) does not send equivalences of \(\infty\)-bicategories to bicategorical equivalences of scaled simplicial sets. For example, if \(J\) is the nerve of the walking isomorphism with two objects then \(J_{\sharp} \to \Del^0\) is an equivalence of \(\infty\)-bicategories but \(J_{\flat} \to \Del^0\) is not a bicategorical equivalence.
\end{rem}

\subsection{The universal property of the Gray product}\label{sec:universal}

In this section we will endow the Gray tensor product of \S\ref{sec:gray-lax} with a \emph{universal mapping property} defined in terms of the \(\infty\)-bicategory of lax functors above. To formulate it, let \(\C = (\ovl{\C},T_{\C})\) and \(\D=(\ovl{\D},T_{\D})\) be two \(\infty\)-bicategories. Let \(T_{\minisimeq} \subseteq T_{\C} \times T_{\D}\) be the subset consisting of those \((\alp,\beta)\) such that either \(\alp|_{\Del^{\{1,2\}}}\) is invertible in \(\C\) or \(\beta|_{\Del^{\{0,1\}}}\) is invertible in \(\D\). By Corollary~\ref{p:invertible-leg} the inclusion
\[ \C \otimes \D \hrar (\ovl{\C} \times \ovl{\D},T_{\minisimeq}) \]
is a bicategorical equivalence. Since this map is also an isomorphism on the level of the underlying simplicial sets we get that for every \(\infty\)-bicategory \(\E\) the associated restriction map gives an isomorphism of (fibrant) scaled simplicial sets
\[ \Fun((\ovl{\C} \times \ovl{\D},T_{\minisimeq}),\E) \cong \Fun(\C \otimes \D,\E).\]
On the other hand, the collection of triangles \(T_{\simeq}\) also contains the set of triangles \(L_{\C \times \D} \subseteq T_{\C} \times T_{\D}\) of Definition~\ref{d:oplax-functors}. Given an \(\infty\)-bicategory \(\E\) we then get a restriction 
\begin{equation}\label{e:restriction-gray-oplax}
\Fun(\C \otimes \D,\E) \cong \Fun((\ovl{\C} \times \ovl{\D},T_{\minisimeq}),\E) \to \Fun_{\oplax}(\C \times \D,\E).
\end{equation}
The universal mapping property of the Gray product can now be formulated as follows:

\begin{thm}
For an \(\infty\)-bicategory \(\E\) the restriction functor~\eqref{e:restriction-gray-oplax} is fully-faithful and its essential image consists of those oplax functors \(\vphi\colon\C \times \D \rightsquigarrow \E\) which satisfy the following conditions:
\begin{enumerate}
\item[(i)]
for all objects \(x \in \C, y \in \D\), the restrictions of \(\vphi\) to \(\{x\} \times \D\) and \(\C \times \{y\}\) are maps of scaled simplicial sets.
\item[(ii)] for all arrows \(f\colon x \to x'\) in \(\C\) and \(g\colon y \to y'\) in \(\D\), the 2-simplex in \(\C\times \D\) depicted by 
\[\begin{tikzcd}[column sep=4em, row sep=3em]
\bigl(x,y\bigr) \ar[r,"(f{,}y)"] \ar[dr,"(f{,}g)"{swap}]& \bigl(x',y\bigr) \ar[d,"(x'{,}g)"]\\
& \bigl(x',y'\bigr)
\end{tikzcd}\]
is mapped by \(f\) to a thin triangle in \(\E\).
\end{enumerate}
\end{thm}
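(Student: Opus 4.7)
The plan is to interpret the restriction functor~\eqref{e:restriction-gray-oplax} via the bicategorical equivalence \(\C\otimes\D \simeq (\ovl{\C}\times\ovl{\D}, T_{\minisimeq})\) from Proposition~\ref{p:invertible-leg}: it becomes the functor \(\iota^\ast\) induced by the scaling inclusion \(\iota\colon (\ovl{\C}\times\ovl{\D}, L_{\C\times\D}) \hookrightarrow (\ovl{\C}\times\ovl{\D}, T_{\minisimeq})\), whose underlying map of simplicial sets is the identity. Because \(\iota\) is a monomorphism of scaled simplicial sets fixing the underlying simplicial set, \(\iota^\ast\) is a simplex-wise subobject inclusion of scaled simplicial sets: an \(n\)-simplex of its source is precisely an \(n\)-simplex of its target that additionally sends every extra thin triangle (those coming from the \(T_{\minisimeq}\) scaling but not already from \(L_{\C\times\D}\)) to a thin triangle in \(\E\). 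Both the essential image characterization and fully faithfulness thus reduce to identifying which such extra triangles in \(\E\) are automatically thin.

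For the essential image I would first show that conditions (i) and (ii) force \(\vphi\) to send every triangle in \(T_+\) to a thin triangle in \(\E\), where \(T_+ \subseteq T_\C \times T_\D\) denotes the subset of pairs \((\alpha, \beta)\) with either \(\alpha|_{\Del^{\{1,2\}}}\) or \(\beta|_{\Del^{\{0,1\}}}\) degenerate, as in \S\ref{sec:comparison}. The key step is a direct 3-simplex decomposition: given \((\alpha,\beta)\in T_+\) with \(\alpha = (f, \mathrm{id}_{x'}, f)\) degenerate along \(\Del^{\{1,2\}}\) and \(\beta\) a thin triangle of \(\D\) on vertices \(y_0, y_1, y_2\), the 3-simplex in \(\ovl{\C}\times\ovl{\D}\) on vertices \((x,y_0), (x',y_0), (x',y_1), (x',y_2)\) has three faces that are directly of condition-(i) or condition-(ii) type, while its fourth face is our target \((\alpha,\beta)\). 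The saturation of \(\E\) (via the 3-simplex extension property of \cite[Remark 3.1.4]{LurieGoodwillie}) then forces the target to be thin, and a symmetric 3-simplex handles the case where \(\beta\) is the degenerate factor. Once \(T_+ \subseteq \vphi^{-1}(\mathrm{thin})\), I would invoke the 4-simplex construction already worked out in the proof of Proposition~\ref{p:invertible-leg}, which combines the scaled anodyne map of Definition~\ref{d:anodyne}(ii) with the saturation of \(\E\), to upgrade from \(T_+\) to all of \(T_{\minisimeq}\).

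For fully faithfulness, since \(\iota^\ast\) is simplex-wise a subobject inclusion, the claim reduces to showing that for any two Gray functors \(F_0, F_1\), every \(n\)-simplex of the hom \(\infty\)-category of \(\Fun_{\oplax}(\C\times\D,\E)\) between \(F_0|_L\) and \(F_1|_L\) already lifts to an \(n\)-simplex of the hom \(\infty\)-category of \(\Fun(\C\otimes\D,\E)\). Such an \(n\)-simplex is a map \(h\colon \Del^n \times \Del^1 \times (\ovl{\C}\times\ovl{\D}) \to \E\) with the boundary conditions from Notation~\ref{n:mapping}, and the task is to show \(h\) sends the extra thin triangles coming from the \(T_{\minisimeq}\) scaling on \(\ovl{\C}\times\ovl{\D}\) to thin in \(\E\). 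When the \(\Del^n\) or \(\Del^1\) projection of such a triangle is constant at a vertex, the triangle lies in a fiber on which \(h\) factors through \(F_0\), \(F_1\), or through a lower-dimensional cell of the hom \(\infty\)-category controlled by induction on \(n\), all of which preserve \(T_{\minisimeq}\). For mixed projections I would adapt the 3-simplex decomposition above, now inside \(\Del^n\times\Del^1\times(\ovl{\C}\times\ovl{\D})\); the auxiliary tetrahedral faces will be arranged so that each is either controlled by a boundary fiber as above or has \((\ovl{\C}\times\ovl{\D})\)-projection equal to a degeneration of a 1-simplex, making one of its legs an identity on both \(\C\)- and \(\D\)-components. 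Such a face is then automatically in \(L_{\C\times\D}\) and is sent to a thin triangle by \(h\) by virtue of \(h\) being an \(L\)-natural transformation.

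The main technical obstacle will be the fully faithfulness verification: the 3-simplex decomposition has to be produced uniformly across all \(n\) and all possible mixed triangle shapes in \(\Del^n \times \Del^1\), with a careful check that the auxiliary tetrahedral faces really fall into the two manageable classes described above. The essential image step, by contrast, is a direct bookkeeping exercise combining the basic 3-simplex above with the 4-simplex argument already worked out in the proof of Proposition~\ref{p:invertible-leg}.
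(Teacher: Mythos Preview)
Your essential-image argument is correct and matches the paper's, just reorganized: the paper packages the passage from conditions (i)--(ii) to \(T_{\minisimeq}\) into a single \(4\)-simplex \(\Beta\) on the vertices \(((0,0),(1,0),(1,1),(1,2),(2,2))\) of \(\Del^2\times\Del^2\) and uses two of its \(3\)-faces in succession, whereas you first produce a \(3\)-simplex to reach \(T_+\) and then invoke the \(4\)-simplex of Proposition~\ref{p:invertible-leg}. These are equivalent bookkeepings of the same saturated-closure computation.

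For full faithfulness, however, you are working much harder than necessary. The paper dispatches it in one sentence: since \(\iota\) is the identity on underlying simplicial sets, \(\iota^\ast\) is an inclusion of simplicial sets whose image is \emph{determined on vertices}, and any such inclusion between \(\infty\)-bicategories is automatically fully faithful, because the mapping \(\infty\)-categories \(\Hom(\phi_0,\phi_1)\) of Notation~\ref{n:mapping} are built from simplices \(\Del^n\times\Del^1\to(-)\) all of whose vertices equal \(\phi_0\) or \(\phi_1\). The only content hidden in ``determined on vertices'' is this: an \(n\)-simplex of \(\Fun_{\oplax}(\C\times\D,\E)\) lies in the image of \(\iota^\ast\) iff it sends every \((\gamma,\delta)\in\deg_2(\Del^n)\times T_{\minisimeq}\) to a thin triangle of \(\E\), and since every degenerate \(\gamma\) factors through an edge of \(\Del^n\) this reduces to the case \(n=1\). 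That case is handled by a single \(3\)-simplex decomposition identical in form to the one you already wrote down for the essential image---the degenerate \(\Del^1\)-factor now plays the role your degenerate \(\C\)- or \(\D\)-factor played there, and the fibre face over an endpoint is thin because \(\phi_0,\phi_1\) preserve \(T_{\minisimeq}\). So what you flag as the ``main technical obstacle'', with its induction on \(n\) and case analysis of mixed projections in \(\Del^n\times\Del^1\), dissolves once you observe that membership in the sub-simplicial-set is a vertex-local condition.
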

\begin{proof}

Since the map \((\ovl{\C} \times \ovl{\D},L_{\C \times \D}) \to ((\ovl{\C} \times \ovl{\D},T_{\simeq})\) is an isomorphism on the underlying simplicial sets it follows that the restriction functor~\eqref{e:restriction-gray-oplax} is an inclusion of simplicial sets whose image is completely determined by the image on the level of vertices, and so as a functor between \(\infty\)-bicategories it is indeed fully-faithful. Let 
\[L_{\C \times \D} \subseteq G_{\C \times \D} \subseteq T_{\C} \times T_{\D}\] 
be the intermediate set of triangles consisting of \(L_{\C \times \D}\) as well as all those 2-simplices \((\alp,\beta)\) such that either \(\alp\) degenerates to a point, or \(\beta\) degenerates to a point, or \(\alp\) degenerates along \(\Del^{\{1,2\}}\) and \(\beta\) degenerates along \(\Del^{\{0,1\}}\). Unwinding the definitions we see that a map \((\ovl{\C} \times \ovl{\D},L_{\C \times \D}) \to \E\) satisfies conditions (i) and (ii) above if and only if it sends all the triangles in \(G_{\C \times \D}\) to thin triangles in \(\E\).
To finish the proof it will hence suffice to show that  
every thin 2-simplex \(T_{\simeq}\) is in the saturated closure of \(G_{\C \times \D}\). 

\begin{figure}[H]
\begin{tikzpicture}
		\draw [->]  (-3,3) -- node [above, near end] {$S$} (-0.3,3);
		\draw [->]  (-2.8,3.2) -- node [left, near end] {$T$} (-2.8,0.5);
		\foreach \i in {1, 3, 5} {
			\foreach \j in {1, 3, 5} {
				\draw [fill] (\i,\j) circle [radius=0.05];
				
			}
		}
		\draw [->, cyan] (1.1,4.9) -- (2.9,3.1);
		\draw [->, cyan] (3.1,2.9) -- (4.9,1.1);
		\draw [->, magenta] (1.1,5) -- (2.9,5);
		\draw [->, magenta] (3,2.9) -- (3,1.1);
		\draw [->, magenta] (3,4.9) -- (3,3.1);
		\draw [->, magenta] (3.1,1) -- (4.9,1);
		\node at (1,4.6) [magenta] {$0$};
		\node at (2.7,4.6) [magenta] {$1$};
		\node at (2.7,2.7) [magenta] {$2$};
		\node at (2.7,0.6) [magenta] {$3$};
		\node at (5.3,0.6) [magenta] {$4$};
	\end{tikzpicture}
	\caption{The 4-simplex \(\Beta\) (magenta) and the diagonal (cyan)}
\label{Beta}
\end{figure}
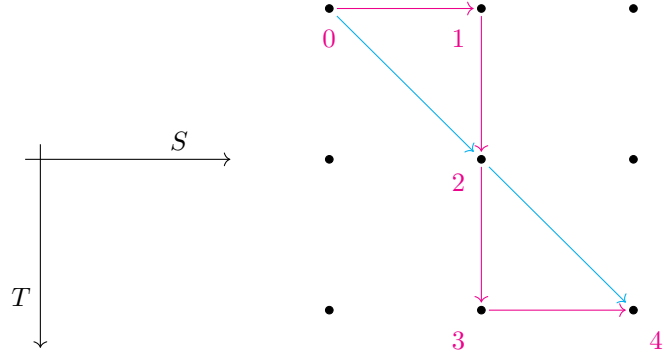

Pick a 2-simplex \((\alp,\beta) \in T_{\simeq}\). 
Then either \(\alp|_{\Del^{\{1,2\}}}\) is invertible in \(\C\) or \(\beta_{\Del^{\{0,1\}}}\) is invertible in \(\D\). To fix ideas assume we are in the former case (the argument in the latter case is entirely similar).
Consider the 4-simplex \(B\colon \Delta^4\to \Delta^2\times \Delta^2\) spanned by the vertices \(((0,0),(1,0),(1,1),(1,2),(2,2))\), and let \(\Beta\x{\text{def}}{=}(\alpha\times\beta)\circ B\colon \Delta^4 \to \C\times \D\) as depicted in Figure \ref{Beta}.

Here, \(\Beta|_{\Del^{\{0,1,2\}}}\), \(\Beta|_{\Del^{\{0,1,3\}}}\) and \(\Beta_{\Del^{\{1,2,3\}}}\) belong to \(G_{\C \times \D}\).
Using \(\Beta|_{\Del^{\{0,1,2,3\}}}\) we then get that \(\Beta|_{\Del^{\{0,2,3\}}}\) is in the saturated closure of \(G_{\C \times \D}\). 
Considering now the face \(\Beta|_{\Del^{\{0,2,3,4\}}}\), we see that since \(\Beta|_{\Del^{\{0,3,4\}}}\) and \(\Beta|_{\Del^{\{1,3,4\}}}\) belong to \(L_{\C \times \D} \subseteq G_{\C \times \D}\) it follows that \(\Beta|_{\Del^{\{0,2,4\}}}\) is also in the saturated closure of \(G_{\C \times \D}\), as desired.
\end{proof}

%
%
\bibliographystyle{amsplain}
\bibliography{biblio}
\end{document}